\newtheorem{theorem}{Theorem}[section]
\newtheorem{lemma}[theorem]{Lemma}
\newtheorem{conj}[theorem]{Conjecture}
\newtheorem{proposition}[theorem]{Proposition}
\newtheorem{corollary}[theorem]{Corollary}
\newtheorem{example}[theorem]{Example}
\newtheorem{question}[theorem]{Question}
\numberwithin{equation}{section}
\theoremstyle{remark}
\newtheorem{remark}[theorem]{Remark}
\newcommand{\Ric}{\mathop{\mathrm{Ric}}\nolimits}
\newcommand{\Ad}{\mathop{\mathrm{Ad}}\nolimits}
\newcommand{\ecal}{\mathcal{E}}
\newcommand{\mcal}{\mathcal{M}}
\def\calM{\mcal}  
  \def\calE{\ecal}
\def\a{\alpha} 
\def\th{\theta} 
 \def\eps{\epsilon}
\def\max{{\operatorname{max}}}
\def\ad{\hbox{\rm ad}}
\def\Ad{\hbox{\rm Ad}}
\def\K{K\"ahler } 
\def\KE{K\"ahler--Einstein }
\def\Ric{\hbox{\rm Ric}\,}
\def\h#1{\hbox{#1}}
\def\strutdepth{\dp\strutbox}
\def\specialstar{\vtop to \strutdepth{
    \baselineskip\strutdepth
    \vss\llap{$\star$\ \ \ \ \ \ \ \ \  }\null}}
\def\marginalstar{\strut\vadjust{\kern-\strutdepth\specialstar}}
\def\marginal#1{\strut\vadjust{\kern-\strutdepth
    {\vtop to \strutdepth{
    \baselineskip\strutdepth
    \vss\llap{{ \small #1 }}\null}
    }}
    }
\def\text{\textstyle}
\def\q{\quad} \def\qq{\qquad}
\def\ra{\rightarrow}
 \newcommand{\NN}{{\mathbb N}}
\def\beq{\begin{equation}}
\def\eeq{\end{equation}}
\def\bpf{\begin{proof}}
\def\epf{\end{proof}}
\def\eaeq{\end{aligned}}
\def\baeq{\begin{aligned}}
\def\mf{\mathfrak}
\author{Artem Pulemotov and Yanir A. Rubinstein}
\title{Ricci iteration on homogeneous spaces}
\begin{document}

\maketitle

\begin{center}
{\it To the memory of Itzhak Bar-Lewaw Mulstock and Serhiy Ovsiyenko}
\end{center}

\begin{abstract}
The Ricci iteration is a discrete analogue of the Ricci flow. We give the first study of the Ricci iteration on a class of Riemannian manifolds that are not K\"ahler. The Ricci iteration in the non-K\"ahler setting exhibits new phenomena. Among them is the 
existence of so-called {\it ancient Ricci iterations}. As we show, these are closely related to ancient Ricci flows and provide the first nontrivial examples of Riemannian metrics 
to which the Ricci operator can be applied infinitely many times. 
In some of the cases we study, these ancient Ricci iterations emerge 
(in the Gromov--Hausdorff topology) from a collapsed Einstein metric and converge smoothly to a second Einstein metric.
In the case of compact homogeneous spaces with maximal isotropy, we prove
a relative compactness result that excludes collapsing.
Our work can also be viewed as proposing 
a dynamical criterion for detecting whether an ancient Ricci flow exists on a given Riemannian manifold as well as a method for predicting 
its limit.
\end{abstract}

\def\lb{\label}

\section{Introduction}

Let $(M,g_1)$ be a smooth Riemannian manifold. 
A Ricci iteration is a sequence of metrics $\{g_i\}_{i\in\mathbb N}$ on $M$ satisfying 
\begin{align}
\label{RIEq}
\Ric g_{i+1}=g_i,\q i\in\mathbb N,
\end{align}
where $\Ric g_{i+1}$ denotes the Ricci curvature of $g_{i+1}$. 
One may think of~\eqref{RIEq}
as a dynamical system on the space of Riemannian  metrics on $M$. We restrict our attention to the case of positive Ricci curvature in the present article; different Ricci iterations can be defined in the context of non-positive curvature.

Part of the interest in the Ricci iteration is that, clearly, 
Einstein metrics with Einstein constant~1 are fixed points, and so~\eqref{RIEq} aims to provide a natural new approach to uniformization. 
In essence, the Ricci iteration aims to reduce the Einstein equation to a sequence of prescribed Ricci curvature equations.
Introduced by the second-named author \cite{R07,R08} as a  discretization of the Ricci flow, the Ricci iteration 
has been since studied by a number of 
authors, see the survey~\cite[\S6.5]{R14} and references therein.
In all previous works, the underlying manifold $(M,g_1)$ is assumed to be K\"ahler;
essentially nothing is known about the Ricci iteration in the general Riemannian setting.

Given the central r\^ole of uniformization and of the Ricci flow 
in geometry, it seems of interest to investigate whether the Ricci iteration could be understood for general Riemannian manifolds which may not be K\"ahler. 
In this article we take a first 
step in this direction. Namely, we show that, indeed, the Ricci iteration can be defined on some non-K\"ahler manifolds, and that under some natural assumptions it converges to an Einstein metric with positive Ricci curvature. We also prove a relative compactness result. The manifolds we investigate here are compact homogeneous spaces. The study of Einstein metrics and the Ricci flow on such spaces is an active field,  see, e.g.,~\cite[Chapter~7]{AB87}, \cite{JIMJ92,XCLSC09,DGTP10,CBMWWZ04,CBMK06,Wang2012,Lauret,AAYSMS14,Bohm,NAYN16,Arv2016}.

Parts of our work can be viewed as proposing a general criterion for detecting whether an ancient Ricci flow exists on a given Riemmanian manifold as well as a method for predicting what its limit should be. The notion of an ancient Ricci flow is 
central to the theory of geometric evolution equations. These ancient flows are the prototype for singularity models for the Ricci flow and have been crucial, for example, in Perelman's work. A basic question in the field is when an ancient Ricci flow exists on a given manifold. If it exists, one would like to describe its limit. In this article, we put forward a conjecture that stipulates that a time-reversed version of the Ricci iteration in fact detects (i.e., is a sufficient condition for) the existence of ancient Ricci flows. We prove this conjecture in the setting of certain homogeneous spaces. Moreover, we show that the limits of the ancient Ricci flows coincide with the limits of the time-reversed iterations. We hope this provides motivation for considering this dynamical approach in general. One can also imagine this being a useful idea for other evolution equations and dynamical systems.

As we find in this article, 
in the non-K\"ahler setting the Ricci iteration
exhibits new types of behavior as compared to the K\"ahler setting. For instance, solutions to~\eqref{RIEq} 
may not exist for all $i\in\NN$.
In addition,  existence (and hence convergence) may fail 
even when $M$ admits a homogeneous Einstein metric. 
Finally, and perhaps surprisingly, we construct the first non-trivial {\it ancient Ricci iterations}. These are sequences of Riemannian metrics
$\{g_1,g_0,g_{-1},g_{-2},\ldots\}$ such that
\begin{equation}
\begin{aligned}
\label{RRIEq}
g_{i-1}=\Ric g_{i}, \qq i=1,0,-1,-2,\ldots.
\end{aligned}
\end{equation}
Thus, on the Riemannian manifold $(M,g_1)$, the time-reversed Ricci iteration 
$$
g_1,\; \Ric g_1,\; \Ric\Ric g_1,\; \ldots
$$ 
exists.
These provide examples of
Riemannian metrics to which the Ricci operator can be applied infinitely many times, answering questions posed in~\cite{R08}. We find such metrics in dimensions as low as~6.

To put ancient Ricci iterations in perspective,
as explained in  \cite[\S3]{R08}, the Ricci iteration  \eqref{RIEq} can be thought of as the backward Euler discretization of the normalized Ricci flow, and hence one may expect good analytical properties for it. On the other hand, in doing the reverse procedure, i.e., iterating the Ricci operator, one typically loses two derivatives in each step. What is more, the Ricci operator can only be iterated as long as positivity is preserved. Therefore, at least from the analytical point of view, it seems unlikely that such ancient iterations should exist. 
We show they are closely tied to the geometry, exist in tandem with ancient solutions of the Ricci flow
constructed by Buzano~\cite{MB14}, and exhibit a relative compactness property similar to that of the Ricci iteration.
In some cases, they emerge from one Einstein metric as $i\ra-\infty$ and converge to a second Einstein metric
as $i\ra\infty$. In other cases, they emerge from a collapsed Einstein limit $G/K$ where 
$H\subsetneq K \subsetneq G$
as $i\ra-\infty$ and converge to an Einstein metric
as $i\ra\infty$. We find examples of the latter phenomenon starting in dimension~7. 

One of the reasons the Ricci iteration is relatively well-understood in the
K\"ahler setting is that the prescribed Ricci curvature equation reduces
in that setting to a complex Monge--Amp\`ere equation. The existence and uniqueness are then given by the Calabi--Yau theorem~\cite{Calabi,Yau}. More precisely, there always exist unique \K metrics solving~\eqref{RIEq} for $i\in\NN$, provided that
$g_1$, and hence each $g_i$, represents ($2\pi$ times) the first Chern class. 
The first result on the Ricci iteration established smooth convergence to a \KE metric under some symmetry assumptions~\cite[Theorem 3.3]{R08} that turn out to hold in a number of interesting cases \cite{CS,CS2,CW,CPS}. A conjecture
 stipulates that the Ricci iteration should in fact always converge in an appropriate sense to a K\"ahler--Einstein metric when one exists~\cite [Conjecture 3.2]{R08}. Recent progress in this direction is due to~\cite [Theorem~A]{BBEGZ}, \cite[Corollary 3.10]{DR}.

Clearly, the first obstacle in the general Riemannian setting is to understand the prescribed Ricci curvature equation.  
While this equation has been the subject of active research (e.g., 
\cite[Chapter~5]{AB87}, \cite{RH84,DDT85,DDTHG99,ED02,PhD03,AP13b,RPLAMP15,GS16})---when compared 
to the K\"ahler case---it is understood rather poorly on general Riemannian manifolds. Recent results of the first-named author~\cite{AP15} provide a replacement for the Calabi--Yau theorem on certain classes of homogeneous spaces and are crucial in the present article.

The second obstacle is to show convergence. In order to do so, one may seek and analyze various monotonic quantities associated with the iteration. 
One key ingredient in the proofs of our Theorems~\ref{s2Thm} and~\ref{AncientThm} is the monotonicity of the quantity $\alpha_i$ given by~\eqref{alphaiEq}. 
Analogous monotonicity was exploited by Buzano in 
the analysis of the Ricci flow~\cite{MB14}. While the iteration turns out to exhibit similar limiting behavior to the flow in many situations, there are cases where they differ; see, e.g., Remark~\ref{DiffRFRIRemark}.

The third, and perhaps new, 
obstacle in the general Riemannian setting is 
that---as we find in this article---the Ricci iteration can exhibit a range of behaviors depending on the homogeneous space {\it and } the starting point, quite in contrast to the  K\"ahler setting:
there the Ricci iteration always exists and  either converges or not depending only on whether a K\"ahler--Einstein metric exists or not (regardless of the initial condition).
We will see that neither of these behaviors 
persists in the setting of homogeneous spaces. 
This is, perhaps, indicative of the relation between the Ricci iteration and the Ricci flow. Indeed,
the behavior of the Ricci flow on homogeneous spaces 
is rather intricate as compared to the  K\"ahler--Ricci flow on Fano manifolds (that always converges to a K\"ahler--Einstein metric when such a metric exists).

Finally, we remark that the Ricci iteration seems harder to understand than the Ricci flow.
This is certainly the case in the K\"ahler setting; perhaps, one reason is the wide variety of tools available
to analyze parabolic flows. 
Currently, a complete theory of the Ricci iteration for all compact homogeneous manifolds
seems out of reach, as even Einstein metrics and the Ricci flow are not completely understood in this setting. The study we carry out in the present article---while restricted to a certain class of highly symmetric manifolds in which the Ricci iteration reduces to a sequence of systems of algebraic equations---at least gives a setting in which the Ricci iteration can be 
fully understood and compared to the Ricci flow in detail. Among other things, it suggests what kind of phenomena to expect in general. 

\section{Main results}
\label{sec_results}

Let $H $ be  a closed connected subgroup of a compact connected Lie group $G$.
In this article, we focus on the compact homogeneous space
$$
M:=G/H
$$
and $G$-invariant Riemannian metrics on it. 

\subsection{Preliminaries}
\label{prelim_subsec}

The group $G$ acts on $M$ by associating to $\lambda\in G$ the diffeomorphism
$L_\lambda:M\ra M$ defined by
$\nu H\mapsto \lambda \nu H$ for $\nu\in G$.
The action is called effective if the equality $L_\lambda=\h{id}$ implies that $\lambda$ is the identity of $G$. Henceforth, $G$ will be assumed to act effectively. 
This entails no loss of generality~\cite[\S7.12]{AB87}.

Suppose 
\begin{equation}
\begin{aligned}
\label{dimEq}
n:=\dim M\ge 3.
\end{aligned}
\end{equation}
Let $\mathfrak g$ denote the Lie algebra of $G$, and let $\Ad_G$ be the adjoint representation of $G$ on $\mathfrak g$. We fix a bi-invariant Riemannian metric on $G$. It induces an $\Ad_G(G)$-invariant
inner product $Q$ 
on $\mathfrak g$ and a 
$G$-invariant Riemannian metric $\hat g$ on $M$.
Choose a point $\mu\in M$. Then 
$H$ can be identified with $G\cap\h{Iso}_\mu(M,\hat g)$, where $\h{Iso}_\mu(M,\hat g)$ is the isotropy group of $(M,\hat g)$ at $\mu$.

Let  $\mathfrak h$ denote the Lie algebra of $H$. 
We know $H$ is compact (as $G$ is), and so is 
$\Ad_G(H)$. The $Q$-orthogonal complement of $\mathfrak h$ in $\mathfrak g$ is an $\Ad_G(H)$-invariant subspace of $\mathfrak g$. We denote this subspace by $\mathfrak m$. Thus, 
\begin{equation*}
\begin{aligned}
\mathfrak g=\mathfrak m\oplus \mathfrak h.
\end{aligned}
\end{equation*} 
We naturally identify
$$
\mathfrak m\cong \mathfrak g/\mathfrak h \cong T_\mu M.
$$ 
Every $G$-invariant Riemannian metric on $M$ induces an $\Ad_G(H)$-invariant inner product on $\mathfrak m$. The converse also holds \cite[Theorem~7.24]{AB87}, so
\begin{align}
\label{calMEq}
\calM &:=\{\h{$G$-invariant Riemannian metrics on $M$}\}
\cr
&\cong\{\h{$\h{Ad}_G(H)$-invariant inner products on $\mathfrak m$}\}.
\end{align}
Unlike in the case of K\"ahler metrics on a K\"ahler manifold, the space of $G$-invariant metrics on $M$ is finite-dimensional. Nevertheless, we will see that
the Ricci iteration exhibits more complicated phenomena than in the \K setting, where the space of \K metrics is infinite-dimensional.

Consider a $Q$-orthogonal $\Ad_G(H)$-invariant decomposition
\begin{align}\label{m_decomp}
\mathfrak m=\mathfrak m_1\oplus\cdots\oplus\mathfrak m_s
\end{align}
such that $\Ad_G(H)|_{\mathfrak m_i}$ is irreducible for each $i=1,\ldots,s$. Let
\begin{equation}
\begin{aligned}
\label{diEq}
d_i:=\dim{\mathfrak m_i}\in\NN.
\end{aligned}
\end{equation} 
While the space $\mathfrak m$ may admit more than one decomposition of the form~\eqref{m_decomp},
the number of summands, $s$, is determined by $G$ and $H$, i.e., it is
the same for all such decompositions. On the other hand, 
the summands $\mathfrak m_1,\ldots,\mathfrak m_s$ {\it are} determined
uniquely up to order if 
\begin{equation}
\begin{aligned}
\label{InEquivAssumEq}
\h{$\Ad_G(H)|_{\mathfrak m_i}$ is inequivalent to $\Ad_G(H)|_{\mathfrak m_k}$ whenever $i\ne k$.}
\end{aligned}
\end{equation}
Indeed, suppose $\mathfrak m_1'\oplus\cdots\oplus\mathfrak m_{s'}'$ is another decomposition. 
The $Q$-orthogonal projection from $\mathfrak m_i$ to $\mathfrak m_j'$ is an equivariant map. Therefore, it is either an isomorphism or zero by Schur's lemma. We will assume~\eqref{InEquivAssumEq} holds in order to state our first two results. For our other results, however, we will {\it not} assume it. 

The isotropy representation of $M$ is said to be \emph{monotypic} if it satisfies~\eqref{InEquivAssumEq}.

Given $T\in\mathcal M$, it is always possible to choose the decomposition~\eqref{m_decomp} so that $T$ has a simple ``diagonal" form.
More precisely, 
let 
$$
\pi_i:\mathfrak m\ra \mathfrak m_i
$$ denote the natural projections
induced by~\eqref{m_decomp}. Recall that
$\Ad_G(H)|_{\mathfrak m_i}$ is irreducible for each $i$. There exists a choice of~\eqref{m_decomp} such that $T$ has the form
\begin{align}\label{T_def}
T=\sum_{i=1}^s z_i\pi_i^* Q,\q z_i>0,
\end{align}
see~\cite[page~180]{MWWZ86}. Equality~\eqref{T_def} can also be written as
$$
T(X,Y)=\sum_{i=1}^s z_iQ(\pi_i(X),\pi_i(Y)),\qquad X,Y\in\mathfrak m.
$$

As~\eqref{calMEq} shows, we can identify $\mathcal M$ with a subset of $\mathfrak m^*\otimes\mathfrak m^*$. With this identification, the topology of $\mathfrak m^*\otimes\mathfrak m^*$ induces a topology on $\mathcal M$. Convergence in $\mathcal M$ is equivalent to the smooth convergence of Riemannian metrics on $M$.

In what follows, we say $H$ is maximal in $G$ if $H$ is a maximal connected Lie subgroup of $G$ or, equivalently, if $\mf h$ is
a maximal Lie subalgebra of $\mf g$.

When $s=1$, the space $\mathcal M$ is one-dimensional, and so we will always assume $s>1$.
(We remark that our earlier assumption  \eqref{dimEq}  is reasonable since the
only compact two-dimensional manifolds admitting positively-curved metrics, $S^2=SO(3)/SO(2)$
and $\mathbb RP^2=O(3)/(O(2)\times O(1))$, satisfy $s=1$.) 
Theorems~\ref{s2Thm} and~\ref{AncientThm} below concern the case when $s=2$ and~\eqref{InEquivAssumEq} holds. To formulate them, denote
\begin{equation}
\begin{aligned}
\label{EEq}
\calE:=\{\h{Einstein metrics in $\calM$}\}.
\end{aligned}
\end{equation}
If $\mathcal E$ is nonempty, define
\begin{align}
\alpha_-&:=
\inf\big\{{z_1}/{z_2}\,:\,
z_1,z_2~\mbox{satisfy~\eqref{T_def} for some}~T\in\mathcal E\big\},\notag
\\
\alpha_+&:=
\sup\big\{{z_1}/{z_2}\,:\,z_1,z_2~\mbox{satisfy~\eqref{T_def} for some}~T\in\mathcal E\big\}.\label{alphapmEq}
\end{align}
Lemma~\ref{RicPosEinLemma} below implies that, when $s=2$ and~\eqref{InEquivAssumEq} holds, there are only finitely many Einstein metrics in $\calM$ up to scaling; therefore, $\alpha_-$ and $\alpha_+$ lie in $(0,\infty)$.

Theorems~\ref{s2Thm} and~\ref{AncientThm} distinguish between the maximal and non-maximal cases. 
When $H$ is maximal in $G$, at least one Einstein metric exists in $\mathcal M$~\cite[p. 177]{MWWZ86}.
When $H$ is not maximal, there is a connected Lie subgroup $K$
of $G$ such that
\begin{equation}
\begin{aligned}
\label{HKGEq}
H\subsetneq K \subsetneq G,
\end{aligned}
\end{equation}
and we write $\mathfrak k$ for the Lie algebra of $K$. If $s=2$ and~\eqref{InEquivAssumEq} is satisfied,
we assume without loss of generality that
\begin{align}\label{km1hEq}
\mathfrak k=\mathfrak h\oplus\mathfrak m_1.
\end{align}

Theorems~\ref{s2Thm} and~\ref{AncientThm} will require that the projection $\pi_1[\mathfrak m_2,\mathfrak m_2]$ be nonzero, i.e.,
\begin{equation}
\begin{aligned}
\label{QHypEq}
Q([X,Y],Z)\ne0 \h{\ for some $X\in\mathfrak m_1$ and $Y,Z\in\mathfrak m_2$}.
\end{aligned}
\end{equation}
This hypothesis automatically holds when $H$ is maximal in $G$
(see Lemma \ref{gammaMaxLemma}). If an intermediate subgroup $K$ satisfying~\eqref{HKGEq} and~\eqref{km1hEq} exists, formula~\eqref{QHypEq}
may fail to hold. However, when~\eqref{QHypEq} is false, all the $G$-invariant metrics on $M$ have the same Ricci curvature (see~\eqref{SameRicEq}). In this case, there is at most one Ricci iteration in~$\mathcal M$. All the metrics in this iteration must be the same. Moreover, they must be Einstein.

If $s=2$ and $T\in\mathcal M$ is given by~\eqref{T_def}, then the ratio $z_1/z_2$ determines $T$ up to scaling. Since we are assuming $z_1,z_2>0$, one may think of the space of such ratios as the moduli space of $G$-invariant metrics on~$M$.

\subsection{Two irreducible isotropy summands}
\label{}

Our first result completely describes the Ricci iteration on compact homogeneous spaces with two inequivalent isotropy summands. We refer to~\cite{WDMK08,CH12} for a classification of such spaces; see also Examples~\ref{example_nontriv} and~\ref{example_triv} below. The Ricci flow on them was studied in~\cite{AC2011,LGRM12,MB14}.

\begin{theorem}[Ricci iteration for $s=2$]\label{s2Thm}
Let $M=G/H$. Assume that $s = 2$ in the decomposition~(\ref{m_decomp}) and that the isotropy representation of $M$ is monotypic (i.e.,~(\ref{InEquivAssumEq}) holds).
Let $T\in\mathcal M$ (recall~(\ref{calMEq})) be 
an arbitrary $G$-invariant metric
given by~(\ref{T_def}).

\begin{enumerate}[(i)]
\item
Suppose $H$ is maximal in $G$. There exists a unique sequence $\{g_i\}_{i\in\NN}\subset\mathcal M$ such that the iteration equation~(\ref{RIEq}) holds for all $i\in\mathbb N$ and $g_1=cT$ for some $c>0$. The metrics $\{g_i\}_{i\in\NN}$ converge smoothly to an Einstein metric on $M$ as $i$ tends to $\infty$.

\item
Suppose $G$ has a connected Lie subgroup $K$ satisfying~(\ref{HKGEq}) and~(\ref{km1hEq}). Let the projection $\pi_1[\mathfrak m_2,\mathfrak m_2]$ be nonzero (i.e., let~(\ref{QHypEq}) hold).

\begin{enumerate}

\item
Assume $\Ad_G(H)|_{\mathfrak m_1}$ is trivial. Then $\calE\ne\emptyset$.
There exists a unique sequence $\{g_i\}_{i\in\NN}\subset\mathcal M$ 
such that the iteration equation~(\ref{RIEq}) holds
for all $i\in\mathbb N$ and $g_1=cT$ for some $c>0$. This sequence converges smoothly to the unique 
$G$-invariant Einstein metric on $M$ with Einstein constant 1.

\item
Assume $\Ad_G(H)|_{\mathfrak m_1}$ is nontrivial
and $\mathcal E\ne\emptyset$. A sequence $\{g_i\}_{i\in\NN}\subset\mathcal M$ 
satisfying~(\ref{RIEq}) for all $i\in\mathbb N$ and $g_1=cT$ for some~$c>0$ exists if and only if
the inequality $z_1/z_2\ge\a_-$ holds.
When it exists, the sequence $\{g_i\}_{i\in\NN}$ is unique and converges smoothly to an Einstein metric on $M$.

\item If  $\mathcal E=\emptyset$,
there is no sequence $\{g_i\}_{i\in\NN}\subset\mathcal M$ 
such that~(\ref{RIEq}) holds
for all $i\in\mathbb N$.
\end{enumerate}
\end{enumerate}
\end{theorem}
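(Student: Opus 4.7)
The plan is to reduce the Ricci iteration for $s=2$ to a one-dimensional dynamical system on the ratio $\alpha := z_1/z_2$ and use the assumption $\mathcal E = \emptyset$ to show that no infinite orbit can exist.

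First, I parametrize every $g \in \mathcal M$ as $z_1\pi_1^*Q + z_2\pi_2^*Q$ via~\eqref{T_def}. The inequivalence hypothesis~\eqref{InEquivAssumEq} together with Schur's lemma forces $\Ric g = R_1(g)\pi_1^*Q + R_2(g)\pi_2^*Q$ to be diagonal in the same decomposition. Scale invariance of the Ricci tensor (i.e.\ $\Ric(tg) = \Ric g$ as $(0,2)$-tensors) implies $R_k$ is homogeneous of degree zero in $(z_1,z_2)$, so $R_k = \tilde R_k(\alpha)$ for a scalar function $\tilde R_k$. The hypothesis $\mathfrak k = \mathfrak h \oplus \mathfrak m_1$ forces $[\mathfrak m_1,\mathfrak m_1] \subseteq \mathfrak k$, hence $Q([\mathfrak m_1,\mathfrak m_1], \mathfrak m_2) = 0$; this kills certain structure constants in the Wang--Ziller-type formula, reducing $\tilde R_k$ to a tractable rational function of $\alpha$.

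Next, I encode the iteration in the 1D picture. The equation $\Ric g_{i+1} = g_i$ reads $\tilde R_k(\alpha_{i+1}) = z_k^{(i)}$; taking the ratio yields the recurrence $\alpha_i = F(\alpha_{i+1})$ with $F := \tilde R_1/\tilde R_2$. Let $V := \{\alpha > 0 : \tilde R_1(\alpha) > 0,\ \tilde R_2(\alpha) > 0\}$. A metric with ratio $\alpha \in V$ is Einstein iff $F(\alpha) = \alpha$, so $\mathcal E \ne \emptyset$ is equivalent to $F$ having a fixed point in $V$. An infinite iteration is precisely an infinite backward $F$-orbit inside $V$.

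Now suppose for contradiction that such an orbit exists. Since $\mathcal E = \emptyset$, the continuous function $F - \mathrm{id}$ vanishes nowhere on $V$; the explicit form of $\tilde R_k$ shows $V$ is an interval, so $F - \mathrm{id}$ has constant sign on $V$. Without loss of generality $F > \mathrm{id}$, so the orbit is strictly decreasing and converges to some $\alpha_\infty \in [0,\infty)$. If $\alpha_\infty \in V$, continuity of $F$ at $\alpha_\infty$ produces a forbidden fixed point. If $\alpha_\infty \in \partial V$, the endpoint behavior of $\tilde R_k$ (where one of the $\tilde R_k$ vanishes) forces $F(\alpha) \to 0$ or $\infty$ as $\alpha \to \alpha_\infty$, which, combined with the monotonicity, must be shown to be incompatible with $F(\alpha_{i+1}) = \alpha_i \to \alpha_\infty$. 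Equivalently, this mirrors the preceding subcase: setting $\alpha_- := \inf\{z_1/z_2 : T \in \mathcal E\} = +\infty$ when $\mathcal E = \emptyset$, the admissibility condition $z_1/z_2 \geq \alpha_-$ fails for every initial $T$, and the dynamical-systems argument of that subcase goes through with the fixed-point structure removed. The chief technical obstacle is this boundary analysis: ruling out ``collapsing'' orbits that hug $\partial V$ while remaining inside $V$ at every step requires the precise asymptotics of the Ricci formula at the endpoints of $V$, which in turn relies on the simplifications afforded by the vanishing structure constants arising from~\eqref{km1hEq}.
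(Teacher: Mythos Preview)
Your reduction to a one-dimensional dynamical system on the ratio $\alpha$ and the use of monotonicity coming from the absence of fixed points is exactly the paper's strategy for part~(ii-c); see Lemma~\ref{lem_nonmax}(iii) and the short argument following it, which invokes the reasoning of~(ii-b).

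There is, however, a gap in your boundary analysis. With $\gamma_{11}^2=0$ (from~\eqref{km1hEq}) and $\gamma_{22}^1>0$ (from~\eqref{QHypEq}) one computes, via Lemmas~\ref{PRCs2Lemma} and~\ref{thetaLemma},
\[
\tilde R_1(\alpha)=\tilde\eta_1+\tfrac{\gamma_{22}^1}{4d_1}\alpha^2,\qquad
\tilde R_2(\alpha)=\tilde\eta_2-\tfrac{\gamma_{22}^1}{2d_2}\alpha,
\]
where $\tilde\eta_1=\zeta_1+\tfrac{\gamma_{11}^1}{4d_1}$ and $\tilde\eta_2=\zeta_2+\tfrac{\gamma_{22}^2}{4d_2}+\tfrac{\gamma_{22}^1}{d_2}$. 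Since $\mathcal E=\emptyset$ rules out the trivial case~(ii-a), we have $\zeta_1>0$ (Remark~\ref{zetaRemark}), hence $\tilde\eta_1>0$ and $\tilde R_1>0$ on all of $(0,\infty)$. Thus $V=(0,\alpha^*)$ with $\alpha^*=2d_2\tilde\eta_2/\gamma_{22}^1$, and the only boundary point the decreasing sequence can approach is $\alpha_\infty=0$. At that endpoint \emph{neither} $\tilde R_k$ vanishes, contrary to what you assert: instead $F(\alpha)\to\tilde\eta_1/\tilde\eta_2\in(0,\infty)$ as $\alpha\to0^+$. The contradiction is then that $\alpha_i=F(\alpha_{i+1})\to\tilde\eta_1/\tilde\eta_2>0$ while simultaneously $\alpha_i\to0$, not the ``$F\to0$ or $\infty$'' mechanism you sketch. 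The paper obtains the same positive lower bound $\alpha_i>\tilde\eta_1/\tilde\eta_2$ directly from the solvability criterion of Proposition~\ref{prop_Artem} (see~\eqref{aietaEq}), after which the convergence argument of~(ii-b) produces an Einstein limit and the contradiction. Note also that your ``without loss of generality $F>\mathrm{id}$'' is in fact forced, not merely WLOG, by $F(0^+)=\tilde\eta_1/\tilde\eta_2>0$ together with the connectedness of $V$.
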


\begin{remark} {\rm
\label{DiffRFRIRemark}
The Ricci iteration and the Ricci flow exhibit essentially the same limiting behavior except in two cases:
in the case (ii-b) with initial condition satisfying $z_1/z_2<\a_-$ and in the case (ii-c), the flow converges to a collapsed Einstein metric on $G/K$ \cite[Theorems~3.4,~3.8]{MB14}, while the iteration stops after finitely many steps.
} \end{remark}

One motivation for introducing the Ricci iteration in~\cite{R08} was a question of DeTurck and Nadel
\cite[Remark 4.63]{RThesis}, \cite{Nadel}: 
does the Ricci operator possess nontrivial (i.e., non-Einstein) fixed points
(i.e., solutions to $\Ric\Ric\ldots\Ric g=g$)? 
The proof of Theorem \ref{s2Thm} answers this question in the negative in the setting of the theorem since we exhibit a strictly monotone quantity along the iteration. 

Another natural question, posed in \cite{R08}, is, roughly, whether there exist discrete analogs of ancient Ricci flows.
More precisely, given a Riemannian metric $g$, one may associate to it a Riemannian invariant 
\begin{align*}
r(g):=\sup\big\{k\in\NN\,:\,\underbrace{\Ric\Ric \ldots\Ric}_{k-1~\rm{times}} g~\mbox{is a Riemannian metric}\big\}
\end{align*}
called the Ricci index of $g$ \cite[\S10.7]{R08}.
Following \cite[Definition~10.21]{R08} one can then define a filtration of~$\calM$
\begin{align}\label{filtEq}
\calM=\calM^{(1)}\supset \calM^{(2)}\supset
\cdots\supset \calM^{(l)}\supset \cdots,
\end{align}
by setting 
$$
\calM^{(l)}=\{g\in\calM\,:\, r(g)\ge l\}. 
$$

\begin{question}[{\rm \cite[p. 1562]{R08}}]
\label{MinfQ}
What is $\calM^{(\infty)}:=\cap_{l=1}^\infty \calM^{(l)}$? What is the relation between 
$\calM^{(\infty)}$ and the Ricci flow?
\end{question}

Clearly, $\calE\subset \calM^{(\infty)}=\{g\in\calM\,:\,r(g)=\infty\}$. But, do there exist $g$ with $r(g)=\infty$ that are neither Einstein nor direct sums of Einstein metrics?
We answer this last question in the affirmative. We also answer Question \ref{MinfQ} 
on compact homogeneous spaces with $s=2$ satisfying~\eqref{InEquivAssumEq}. 
The interest in metrics with infinite Ricci index is that one may use them as starting points for the 
ancient Ricci iteration~\eqref{RRIEq}.
As pointed out to us by M. Wang after the completion of this article,
Walton \cite{Walton} proved Theorem~\ref{AncientThm} below with the exception of part 
(ii-a) recently and independently.
Comparing the result to the study of the Ricci flow by Buzano~\cite{MB14}
shows that the ancient Ricci iterations exist in tandem with ancient Ricci flows and have similar limits.

\begin{theorem}[ancient Ricci iterations for $s=2$]\label{AncientThm}
Let $M=G/H$. Assume that $s = 2$ in the decomposition~(\ref{m_decomp}) and that the isotropy representation of $M$ is monotypic (i.e.,~(\ref{InEquivAssumEq}) holds). 
In the statements below, 
we always assume $T\in\mathcal M$ (recall~(\ref{calMEq})) 
is given by~(\ref{T_def}).

\begin{enumerate}[(i)]
\item
Suppose $H$ is maximal in $G$. 
Then (recall~(\ref{alphapmEq}))
\begin{equation}
\begin{aligned}
\label{Minfmax}
\calM^{(\infty)}=\{T\in\calM\,:\, z_1/z_2\in[\a_-,\a_+]\}.
\end{aligned}
\end{equation}
Whenever $r(g_1)=\infty$, the metrics 
$\{g_{-i}\}_{i=-1}^\infty$
given by the ancient iteration equation~(\ref{RRIEq}) converge smoothly 
to an Einstein metric on $M$.

\item
Suppose $G$ has a connected Lie subgroup $K$ satisfying~(\ref{HKGEq}) and~(\ref{km1hEq}). Let the projection $\pi_1[\mathfrak m_2,\mathfrak m_2]$ be nonzero (i.e., let~(\ref{QHypEq}) hold).
\begin{enumerate}

\item
Assume $\Ad_G(H)|_{\mathfrak m_1}$ is trivial.
Then
\begin{align}
\label{Minfminus}
\calM^{(\infty)}=
\{T\in\calM\,:\, z_1/z_2\le\a_-\}.
\end{align}
Whenever $r(g_1)=\infty$ and $g_1$ is non-Einstein, the metrics 
$\{g_{-i}\}_{i=-1}^\infty$
given by~(\ref{RRIEq}) converge smoothly 
to a~degenerate tensor that is the pullback of an Einstein metric $g_E$ on $G/K$ under the inclusion map $G/K\hookrightarrow G/H$.
The manifolds $(M,g_{-i})$ converge in the Gromov--Hausdorff topology to $(G/K,g_E)$ as~$i\to\infty$.

\item 
Assume $\Ad_G(H)|_{\mathfrak m_1}$ is nontrivial. The set
$\calM^{(\infty)}$ is empty if $\calE$ is, and
\begin{equation}
\begin{aligned}
\label{Minfnonmax}
\calM^{(\infty)}
=
\{T\in\calM\,:\,  z_1/z_2\le \a_+\}.
\end{aligned}
\end{equation}
otherwise. Whenever $r(g_1)=\infty$, the metrics 
$\{g_{-i}\}_{i=-1}^\infty$
given by the ancient iteration equation~(\ref{RRIEq}) converge smoothly 
to an Einstein metric on $M$.
\end{enumerate}
\end{enumerate}
\end{theorem}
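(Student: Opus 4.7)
By \eqref{InEquivAssumEq} the decomposition in \eqref{m_decomp} is unique up to order, so every metric in $\calM$ has the diagonal form $T = z_1 \pi_1^* Q + z_2 \pi_2^* Q$, and its Ricci tensor is diagonal in the same decomposition with coefficients $r_j(z_1,z_2)$ given by explicit rational functions homogeneous of degree $0$ in $(z_1,z_2)$. The ancient iteration \eqref{RRIEq} is then the forward substitution $z_j^{(i-1)} = r_j(z_1^{(i)}, z_2^{(i)})$, which is well-defined at step $i$ iff $r_1, r_2 > 0$ there. Projecting to the ratio $\alpha_i := z_1^{(i)}/z_2^{(i)}$ reduces the problem to a one-dimensional rational map $F$ on $\alpha$ with domain $U \subset (0,\infty)$ cut out by positivity, whose fixed points in $U$ are exactly the Einstein ratios $\alpha_-, \alpha_+$.

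\textbf{Monotonicity and general structure of $\calM^{(\infty)}$.} The key dynamical input is the strict monotonicity of $\alpha_i$ along the ancient iteration, parallel to the monotonicity underlying the proof of Theorem~\ref{s2Thm}. Consequently any $g_1 \in \calM^{(\infty)}$ generates a bounded monotone sequence in $\overline U$ that must converge to a fixed point of $F$ in $\overline U$. Conversely, any initial ratio lying in the maximal $F$-invariant closed subinterval of $\overline U$ produces an infinite ancient iteration. Thus $\calM^{(\infty)}$ is characterized by $\alpha_1$ lying in this subinterval, and the limit ratio is necessarily an Einstein ratio (possibly a boundary fixed point of $U$).

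\textbf{Case analysis.} In case (i), both $\alpha_\pm$ lie in $U$ and $[\alpha_-, \alpha_+]$ is the maximal invariant subinterval; outside it the $F$-orbit hits a zero of $r_1 r_2$ in finitely many steps, giving \eqref{Minfmax}, and the smooth limit in $\calM$ follows by tracking the overall scale through the equation $z_2^{(i-1)} = r_2$. In case (ii-b), the same analysis yields $\calM^{(\infty)} = \emptyset$ when $\calE = \emptyset$ (no fixed point to absorb the monotone sequence), and otherwise the maximal invariant subinterval is $(0, \alpha_+]$ because the only obstruction to extending the ancient iteration occurs beyond $\alpha_+$, giving \eqref{Minfnonmax}. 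Case (ii-a) is the novel one: the triviality of $\Ad_G(H)|_{\mathfrak m_1}$ annihilates the $\mathfrak m_1$-$\mathfrak m_1$ contribution to $r_1$, so $r_1(\alpha)$ extends positively to $\alpha = 0$, creating a boundary fixed point there. The maximal invariant subinterval is therefore $[0, \alpha_-]$, and for $\alpha_1 < \alpha_-$ the monotone sequence $\alpha_{-i}$ is driven to $0$, i.e., $z_1^{(-i)} \to 0$ while $z_2^{(-i)}$ stabilizes. Identifying the restricted dynamics on $z_2^{(-i)} \pi_2^* Q$ with the Ricci iteration on the isotropy-irreducible base $G/K$ (where $\mathfrak m_2 \cong T(G/K)$, so $s = 1$) yields a limit equal to the pullback of an Einstein metric $g_E$ on $G/K$, whence Gromov--Hausdorff collapse to $(G/K, g_E)$.

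\textbf{Main obstacle.} The most delicate step is case (ii-a), where one must verify that the boundary point $\alpha = 0$ is a genuine attractor of the reduced ancient dynamics \emph{and} that the limiting degenerate $G$-invariant tensor on $M$ descends to an Einstein metric on $G/K$. Both hinge on the triviality of $\Ad_G(H)|_{\mathfrak m_1}$, which decouples the relevant structure constants so that the $\mathfrak m_2$-sector of $F$ matches the Ricci iteration on the base. Finally, convergence of ratios must be upgraded to smooth convergence in $\calM$ in every case, which is handled by passing to the limit in the rational equations $z_j^{(i-1)} = r_j$ after an appropriate normalization.
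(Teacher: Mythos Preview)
Your approach is essentially the paper's: reduce to the one-dimensional map $\alpha\mapsto F(\alpha)$ on ratios, exploit monotonicity, and characterize $\calM^{(\infty)}$ by the maximal forward-invariant interval. However, your sketch leaves two genuine gaps that the paper fills with specific lemmas.

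\textbf{Positivity of Ricci (well-definedness of $F$).} You assert that $[\alpha_-,\alpha_+]$ in~(i), $(0,\alpha_+]$ in~(ii-b), and $(0,\alpha_-]$ in~(ii-a) are $F$-invariant, but you do not explain \emph{why} $r_1,r_2>0$ throughout these intervals. This is not automatic. The paper proves a ``sandwich'' lemma (Lemma~\ref{RicSandwichLemma}): if $\alpha_1\le\alpha_2\le\alpha_3$ and $\Ric g_1,\Ric g_3$ are positive-definite, so is $\Ric g_2$. This immediately gives positivity between two Einstein ratios. For the non-maximal case with ratio $<\alpha_-$, where there is no lower Einstein barrier, the paper proves separately (Lemma~\ref{TrivCaseLemma}) that $z_1/z_2\le\alpha_-$ forces $\Ric T\in\calM$, using $\gamma_{11}^2=0$ and the explicit form of $r_1,r_2$. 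Without these two facts, your invariance claims are unverified.

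\textbf{Finite Ricci index outside the interval.} You say the orbit ``hits a zero of $r_1r_2$ in finitely many steps'' outside the invariant interval, but this also requires an argument. The paper shows that, e.g.\ for $\alpha_1>\alpha_+$, monotonicity forces $\alpha_{-i}\to\infty$ (any finite limit would be an Einstein ratio $>\alpha_+$), and then $\gamma_{22}^1>0$ in the formula for $r_2$ makes $r_2$ eventually negative. The analogous argument on the other side uses $\gamma_{11}^2>0$ in case~(i).

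\textbf{A correction in (ii-a).} Your explanation of the collapse is slightly off. The $1/\alpha$ term in $r_1$ is already absent throughout case~(ii) because $\gamma_{11}^2=0$ (a consequence of $\mathfrak k$ being a subalgebra), not because of triviality. What triviality of $\Ad_G(H)|_{\mathfrak m_1}$ contributes is $\zeta_1=0$, $d_1=1$ and hence $\gamma_{11}^1=0$, which kills the \emph{constant} term $A_1$ in $r_1$; thus $r_1(\alpha)=B_1\alpha^2\to0$ while $r_2(\alpha)\to A_2>0$, producing the degenerate limit $A_2\pi_2^*Q$. You also do not need to ``identify the restricted dynamics with the Ricci iteration on $G/K$''; since $G/K$ has $s=1$, every $G$-invariant metric on it is automatically Einstein, and the limiting tensor is simply the pullback of such a metric.
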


In particular, $\calM^{(\infty)}\not=\calE$ always in~\eqref{Minfminus} and~\eqref{Minfnonmax}, as well as 
when $\a_-<\a_+$  (i.e., when there exist at least two distinct Einstein
metrics of volume 1) in \eqref{Minfmax}. We also observe that in the maximal case
(in fact, even when $s\ge2$ and~\eqref{InEquivAssumEq} does not necessarily hold)
\begin{equation}
\begin{aligned}
\label{decreasingEq}
\calM^{(l)}\setminus \calM^{(l+1)}
=\{g\in\calM\,:\, r(g)=l\}\not=\emptyset, \q l\in\NN,
\end{aligned}
\end{equation}
as can be seen by applying Theorem~\ref{max_uniqThm} $l$ times starting with
a degenerate positive-semidefinite nonzero~$T$. 
To compare with the K\"ahler setting, note that there also
the filtration of the space of K\"ahler metrics similar to~\eqref{filtEq} is strictly decreasing in the sense of \eqref{decreasingEq}
(by applying the Calabi--Yau theorem $l$ times starting with a non-positive form); however, a characterization of the analogue of $\calM^{(\infty)}$ is still missing.

In general, we make the following conjecture.

\begin{conj}\label{conj_anc}
A manifold admits an ancient Ricci iteration only if it admits an ancient Ricci flow.
\end{conj}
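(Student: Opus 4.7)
The natural strategy is to pass from the discrete ancient Ricci iteration to a continuous ancient Ricci flow via a limiting procedure. Given $\{g_i\}_{i\le 1}\subset\calM$ with $g_{i-1}=\Ric g_i$ for all $i\le 1$, the plan is to: (a) extract a smooth limit of a rescaled sequence $c_{-i}g_{-i}$ as $i\to\infty$; (b) identify the limit as an Einstein metric; (c) use the resulting homothetically shrinking Einstein soliton as an ancient Ricci flow on $M$. Step (b) is automatic once (a) is in place: if $c_{-i}g_{-i}\to g_\infty$ smoothly and $c_{-i}/c_{-i+1}\to\lambda>0$, then rescaling the defining identity $\Ric g_{-i+1}=g_{-i}$ and passing to the limit yields $\Ric g_\infty=\lambda g_\infty$. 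Step (c) is elementary: $g(t):=(1-2\lambda t)\,g_\infty$ for $t\in(-\infty,1/(2\lambda))$ is an ancient Ricci flow on $M$.

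The substantive issue is step (a): a compactness statement for the normalized iterates. In the homogeneous setting of Theorems~\ref{s2Thm}--\ref{AncientThm}, the monotone quantity $\alpha_i$ does exactly this work, controlling the ratio of the two eigenvalues along the iteration and forcing subsequential convergence. In general one would hope to find a replacement: a monotone functional along the backward iteration, or direct curvature, diameter, and noncollapsing bounds of Perelman type adapted to the discrete dynamics. The relative compactness result for the Ricci iteration mentioned in the introduction suggests that analogous discrete-time compactness theorems should be available, at least in restricted classes of manifolds.

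The main obstacle is precisely this compactness. Without a finite-dimensional reduction of the prescribed Ricci equation, $\{g_{-i}\}$ may collapse, degenerate, or develop unbounded curvature; moreover, in contrast to the Ricci flow, each application of $\Ric$ loses two derivatives, so there is no parabolic smoothing on which to rely. A further subtlety appears in the collapsing regime illustrated by Theorem~\ref{AncientThm}(ii-a): the iteration limit is only a degenerate tensor pulled back from an Einstein metric on $G/K$, so the elementary soliton construction produces an ancient Ricci flow on the quotient rather than on $M$. In such cases step (c) must be replaced by a construction in the spirit of Buzano~\cite{MB14}, producing an ancient Ricci flow on $M$ that emerges from a lower-dimensional Einstein metric through collapsing; one expects the conjecture to follow provided some analogue of this construction persists beyond the homogeneous setting.
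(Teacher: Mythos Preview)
The statement you are attempting to prove is presented in the paper as an open \emph{conjecture}, not a theorem; the paper offers no proof. The only evidence given is the remark immediately following it: Theorem~\ref{AncientThm}, together with Buzano's results~\cite{MB14}, verifies the conjecture for compact homogeneous spaces with $s=2$ satisfying~\eqref{InEquivAssumEq}. In that restricted setting the verification is in fact immediate and does not require any limiting procedure along the iteration: by Theorem~\ref{AncientThm}, the set $\calM^{(\infty)}$ is nonempty only when $\calE$ is nonempty (cases~(i), (ii-a), (ii-b)), and under~\eqref{QHypEq} every metric in $\calE$ has positive Einstein constant, so the associated homothetically shrinking soliton already furnishes an ancient Ricci flow on~$M$.

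Your strategy---extract an Einstein limit from a rescaled ancient iteration, then invoke the shrinking soliton---is a reasonable heuristic, and you correctly flag compactness of the rescaled sequence as the essential missing ingredient. Two further points deserve emphasis. First, the conjecture is stated for arbitrary manifolds, whereas your argument is framed inside the finite-dimensional space $\calM$ of $G$-invariant metrics; outside a homogeneous or similarly rigid setting there is no such reduction, the prescribed Ricci equation is a genuine PDE, and even formulating the needed compactness is problematic in view of the two-derivative loss at each step. Second, in the collapsing regime you mention, your route~(c) yields an ancient flow on $G/K$ rather than on $M$, so one is forced back to an independent construction on $M$ (\`a la Buzano) rather than deducing it from the iteration itself. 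In short, your outline captures what one would hope to prove, but it does not close the gap between conjecture and theorem---nor does the paper claim to.
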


In other words, the conjecture gives a sufficient condition for the existence of ancient Ricci flows (on manifolds that admit positive Ricci curvature). 

Theorem~\ref{AncientThm} and the results of Buzano~\cite{MB14} show that this conjecture holds in the case of compact homogeneous spaces satisfying  $s=2$ and~\eqref{InEquivAssumEq}.  
In a forthcoming paper we study this conjecture on certain Lie groups and spheres, including the case of $S^3$, equipped with left-invariant metrics (note that it is easy to see,
as was also pointed out to us by W. Ziller after the completion of the present article, that some Berger metrics on $S^3$ have infinite Ricci index).

\subsection{Relative compactness for maximal isotropy}

Our next result concerns the case when $s\ge 2$
and the isotropy summands are allowed to be equivalent (i.e., 
when \eqref{InEquivAssumEq} does not 
necessarily hold) but when $H$ is assumed to be maximal. It is generally unknown whether, in this situation, the prescribed Ricci curvature problem has unique solutions in $\calM$. Therefore, it is not clear whether to expect convergence of the Ricci iteration or not. 
If indeed there is no uniqueness, then relative compactness is a reasonable replacement (cf.~\cite[Theorem~A]{BBEGZ}, \cite[Corollary 3.10]{DR} in 
the K\"ahler setting). We prove it in Theorem~\ref{GenThm}~(i). It turns out that a similar result holds for ancient Ricci iterations. This is the content of Theorem~\ref{GenThm}~(ii). If a sequence $\{g_i\}_{i\in\mathbb N}$ satisfying~\eqref{RIEq} or a sequence $\{g_{-i}\}^\infty_{i=-1}$ satisfying~\eqref{RRIEq} converges to some tensor field,
this limit may fail to be positive-definite, i.e., a Riemannian metric. When $H$ is maximal, this is excluded by Corollary~\ref{cor_subseq}.

\begin{theorem}[existence and relative compactness for $s\ge 2$]\label{GenThm}
Let $M=G/H$. Suppose that $s \ge 2 $ in the decomposition~(\ref{m_decomp}) and that $H$ is maximal in $G$. 

\begin{enumerate}[(i)]
\item Given $T\in\mathcal M$, there exist a sequence $\{g_i\}_{i\in\NN}\subset\mathcal M$ 
satisfying the iteration equation~(\ref{RIEq}) for all $i\in\NN$ and $g_1=cT$ for some $c>0$. Any such sequence is relatively compact in $\mathcal M$.

\item
Any sequence $\{g_{-i}\}^\infty_{i=-1}\subset\calM$ satisfying the ancient iteration equation~(\ref{RRIEq}) is relatively compact in~$\calM$.
\end{enumerate}
\end{theorem}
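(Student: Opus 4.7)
My plan is to combine the prescribed Ricci curvature theorem of~\cite{AP15} with the scale-invariant monotonicity already used earlier in the paper. For the existence claim in part~(i), I would argue by induction. Given $g_i\in\mathcal{M}$, the main result of~\cite{AP15} on homogeneous spaces with maximal isotropy produces an $h\in\mathcal{M}$ and a positive constant $\lambda_i$ solving $\Ric h=\lambda_i g_i$; since $\Ric(tg)=\Ric g$ for every $t>0$, setting $g_{i+1}:=\lambda_i^{-1} h$ yields $\Ric g_{i+1}=g_i$. A suitable choice of the constant $c$ in $g_1=cT$, using the same scale invariance, fixes the first step of the iteration so that the inductive construction can be carried out uniformly.

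For relative compactness in both (i) and (ii), the workhorse is the scale-invariant quantity $\alpha_i$ from~\eqref{alphaiEq}. Its monotonicity under the iteration, whose analogue for the Ricci flow was exploited by Buzano~\cite{MB14}, would confine $\alpha_i$ to a compact interval along both forward and backward iterations. This yields scale-invariant control on the eigenvalue ratios of $g_i$ relative to a fixed background metric. To promote this to bounds precluding overall collapse or blow-up, I would take traces in $\Ric g_{i+1}=g_i$ (for (i)) or in $g_{i-1}=\Ric g_i$ (for (ii)) to relate the scalar curvatures and volumes of consecutive iterates, and then invoke the known properties of the scalar curvature functional on the space of unit-volume $G$-invariant metrics when $H$ is maximal in $G$.

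The decisive role of maximality is to rule out partial collapse. Without it, intermediate subgroups $H\subsetneq K\subsetneq G$ furnish directions along which metrics can degenerate onto $G/K$, as in Theorem~\ref{AncientThm}(ii-a). When $H$ is maximal no such $K$ exists, and this is what, together with Corollary~\ref{cor_Einstein}, promotes the boundedness of the sequence to genuine relative compactness inside $\mathcal{M}$ rather than merely in its closure in $\mathfrak{m}^*\otimes\mathfrak{m}^*$. The hardest step is precisely to extract these absolute two-sided eigenvalue bounds from the scale-invariant monotonicity of $\alpha_i$ together with the algebraic structure of~\eqref{m_decomp}, uniformly over the (possibly infinite) sequence and without the simplifying assumption~\eqref{InEquivAssumEq} that was available for Theorems~\ref{s2Thm} and~\ref{AncientThm}.
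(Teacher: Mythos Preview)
Your existence argument in~(i) is correct and matches the paper: apply Theorem~\ref{max_uniqThm} inductively, using the scale invariance of Ricci curvature.

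However, your compactness argument has a genuine gap. The quantity $\alpha_i=x_1^{(i)}/x_2^{(i)}$ from~\eqref{alphaiEq} is only defined when $s=2$ and~\eqref{InEquivAssumEq} holds, so that every $G$-invariant metric is diagonal in a \emph{fixed} decomposition~\eqref{m_decomp}. In the setting of Theorem~\ref{GenThm} there is no such single ratio: for general $s\ge2$, and especially when the isotropy summands are allowed to be equivalent, consecutive iterates need not be simultaneously diagonalizable, and no analogue of the monotonicity of $\alpha_i$ is available. Your plan therefore lacks its central tool. Separately, invoking Corollary~\ref{cor_Einstein} is circular, since that corollary is deduced \emph{from} Theorem~\ref{GenThm}.

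The paper's route is different. First, every $g_i$ with $i\ge2$ (resp.\ $i\le0$) satisfies $S(g_i)=\tr_{g_i}\Ric g_i=\tr_{g_i}g_{i-1}>0$ and is itself the Ricci curvature of $g_{i+1}\in\mathcal M$; thus it lies in the set~$\Xi$ of~\eqref{Xi_def}. Second, Lemma~\ref{lem_ratio} (relying on the scalar-curvature estimate~\cite[Lemma~2.4]{AP15}, which is where maximality of $H$ enters) gives a uniform lower bound on the eigenvalue ratio of any $g\in\mathcal M$ with $S(g)\ge0$. Third, Lemma~\ref{lem_pinch} converts this into absolute two-sided bounds on $\Xi$: the lower bound comes from Lemma~\ref{prop_closed} together with the absence of Ricci-flat metrics in $\mathcal M$ (Lemma~\ref{RicPosLemma}); the upper bound comes from the DeTurck--Koiso/Hamilton obstruction~\cite{DeTKoiso,RH84} that a metric with all sectional curvatures below $\tfrac1{n-1}$ cannot be the Ricci curvature of any metric. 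None of this uses $\alpha_i$ or Buzano-type monotonicity.
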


\begin{remark} {\rm
\label{}
The existence part of Theorem~\ref{GenThm}~(i) becomes false if the maximality condition on $H$ is removed. Indeed, Theorem~\ref{s2Thm}~(ii-b)--(ii-c) demonstrates that it may be impossible to find a sequence $\{g_i\}_{i\in\NN}$ satisfying~\eqref{RIEq}. If $\{g_i\}_{i\in\NN}$ is known to exist and~$H$ is not maximal, whether or not $\{g_i\}_{i\in\NN}$ must be relatively compact in $\mathcal M$ is an open question.
} \end{remark}

\begin{remark} {\rm
The collapsing construction of Theorem~\ref{AncientThm}~(ii-a) shows that none of the results for ancient Ricci iterations appearing in this subsection holds if the maximality assumption is dropped.}
\end{remark}

We state two consequences of Theorem~\ref{GenThm}.

\begin{corollary}\label{cor_subseq}
Let $M=G/H$. Suppose that $s \ge 2 $ in the decomposition~(\ref{m_decomp}) and that $H$ is maximal in $G$. 
Any sequence $\{g_i\}_{i\in\mathbb N}\subset\mathcal M$ satisfying the iteration equation~(\ref{RIEq})
(or $\{g_{-i}\}^\infty_{i=-1}\subset\calM$ satisfying the ancient iteration equation~(\ref{RRIEq}))
has a subsequence converging smoothly to some Riemannian metric in~$\mathcal M$.
\end{corollary}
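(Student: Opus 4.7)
The plan is to derive the corollary as an essentially immediate consequence of Theorem~\ref{GenThm}. Recall from the discussion surrounding~(\ref{calMEq}) that $\mathcal M$ is identified with the set of $\Ad_G(H)$-invariant inner products on $\mathfrak m$, sitting as an open cone inside the finite-dimensional vector space of symmetric bilinear forms on $\mathfrak m$. Under this identification, the subspace topology on $\mathcal M$ coincides with the topology of smooth convergence of the associated $G$-invariant Riemannian metrics on $M$, so to prove the corollary it suffices to extract a subsequence converging in $\mathcal M$ in the sense of this finite-dimensional topology.

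For the forward case, let $\{g_i\}_{i\in\mathbb N}\subset\mathcal M$ satisfy~(\ref{RIEq}). Theorem~\ref{GenThm}(i) asserts that this sequence is relatively compact in~$\mathcal M$, so by definition every subsequence admits a further subsequence which converges, in the topology induced from $\mathfrak m^*\otimes\mathfrak m^*$, to a limit $g_\infty\in\mathcal M$; concretely I would apply Bolzano--Weierstrass in the ambient finite-dimensional space and then invoke the ``in $\mathcal M$'' part of the relative compactness to rule out degeneration. By the first paragraph, this convergence is smooth convergence of Riemannian metrics on~$M$. The ancient case is handled identically, replacing Theorem~\ref{GenThm}(i) by Theorem~\ref{GenThm}(ii) and $\{g_i\}_{i\in\mathbb N}$ by $\{g_{-i}\}_{i=-1}^\infty$.

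I therefore expect no independent obstacle in proving this corollary itself: all the substance is absorbed into Theorem~\ref{GenThm}. The two potential failure modes for the limit---blow-up of some eigenvalue of $g_i$ and collapse of some eigenvalue to zero---are already controlled there, with the exclusion of collapse being the delicate point (as flagged in the excerpt preceding Theorem~\ref{GenThm}, this is the role of the maximality of $H$ via Corollary~\ref{cor_Einstein}). The present corollary simply repackages relative compactness in the convergent-subsequence form most convenient for applications.
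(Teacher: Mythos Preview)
Your argument is correct and matches the paper's own proof: both simply invoke Theorem~\ref{GenThm} to obtain relative compactness in $\mathcal M$ and then extract a convergent subsequence, using the finite-dimensionality of the ambient space and the identification of convergence in $\mathcal M$ with smooth convergence. One small slip in your commentary: the parenthetical attributing the exclusion of collapse to Corollary~\ref{cor_Einstein} is circular, since that corollary is proved \emph{from} Corollary~\ref{cor_subseq}; the non-collapse is actually built into Theorem~\ref{GenThm} itself (via the pinching estimates underlying it), so you should not cite Corollary~\ref{cor_Einstein} here.
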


\begin{proof}
Theorem~\ref{GenThm} implies that the closures of $\{g_i\}_{i\in\mathbb N}$ and $\{g_{-i}\}^\infty_{i=-1}$ in $\mathcal M$ are compact subsets of $\mathcal M$. Therefore, these sequences contain subsequences whose limits lie in $\mathcal M$.
\end{proof}

Similarly to~\eqref{calMEq}, there exists an isomorphism
\begin{align}\label{barMEq}
\mathcal T
&:=\{\h{$G$-invariant symmetric (0,2)-tensor fields on $M$}\}\notag
\\
&\cong\{\h{$\h{Ad}_G(H)$-invariant symmetric bilinear forms on $\mathfrak m$}\}.
\end{align}
Thus, we can identify $\mathcal T$ with a subset of $\mathfrak m^*\otimes\mathfrak m^*$. The topology of $\mathfrak m^*\otimes\mathfrak m^*$ induces a topology on~$\mathcal T$, as it did on $\mathcal M$.

\begin{corollary}\label{cor_Einstein}
Suppose that $M=G/H$, that $s \ge 2 $ in the decomposition~(\ref{m_decomp}), and that $H$ is maximal in~$G$. 
Assume a sequence $\{g_i\}_{i\in\mathbb N}\subset\mathcal M$ satisfying the iteration equation~(\ref{RIEq}) 
(or $\{g_{-i}\}^\infty_{i=-1}\subset\calM$ satisfying the ancient iteration equation~(\ref{RRIEq}))
converges smoothly to some tensor field $g_\infty\in\mathcal T$. Then $g_\infty$
is an Einstein metric.
\end{corollary}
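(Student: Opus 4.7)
The strategy is to combine Theorem~\ref{GenThm} with the continuity of the Ricci operator. The subtlety is that the assumed smooth limit $g_\infty\in\mathcal T$ is a priori only a symmetric $(0,2)$-tensor field and could fail to be positive-definite. I would therefore first verify that $g_\infty$ lies in $\mathcal M$ and then pass to the limit in the iteration equation.

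For the positive-definiteness step, I would invoke Theorem~\ref{GenThm}: under the hypotheses $s\ge 2$ and $H$ maximal in $G$, every sequence $\{g_i\}_{i\in\mathbb N}$ satisfying~(\ref{RIEq}), and every sequence $\{g_{-i}\}_{i=-1}^\infty$ satisfying~(\ref{RRIEq}), is relatively compact in $\mathcal M$. Via the identification~(\ref{calMEq}) and~(\ref{barMEq}), $\mathcal M$ embeds into $\mathcal T$ as the open subset consisting of $\Ad_G(H)$-invariant positive-definite inner products on $\mathfrak m$. Relative compactness in $\mathcal M$ therefore says the sequence stays a positive distance away from the boundary $\partial\mathcal M$ inside $\mathcal T$, so the assumed $\mathcal T$-limit $g_\infty$ cannot lie on this boundary; hence $g_\infty\in\mathcal M$.

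Once $g_\infty$ is known to be a Riemannian metric, the Ricci operator $\Ric:\mathcal M\to\mathcal T$ is continuous with respect to smooth convergence (in any local chart it is a polynomial expression in the metric, its inverse, and its first two derivatives). In the forward case, passing to the limit in $\Ric g_{i+1}=g_i$ as $i\to\infty$ yields $\Ric g_\infty=g_\infty$; in the ancient case, the relation $g_{-i-1}=\Ric g_{-i}$ together with $g_{-i}\to g_\infty$ and $g_{-i-1}\to g_\infty$ as $i\to\infty$ gives the same identity. Either way, $g_\infty$ is an Einstein metric with Einstein constant $1$.

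The only nontrivial step is the positive-definiteness of $g_\infty$, and this is precisely where the maximality of $H$ is essential: Theorem~\ref{AncientThm}(ii-a) shows that when $H$ is not maximal, the ancient iteration can collapse in $\mathcal T$ to a degenerate tensor pulled back from an Einstein metric on $G/K$, so the conclusion of the corollary would fail. Aside from this, the argument is a routine combination of relative compactness with the smooth dependence of $\Ric$ on the metric.
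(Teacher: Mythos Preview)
Your argument is correct and matches the paper's proof essentially line for line. The paper routes the positive-definiteness step through Corollary~\ref{cor_subseq} (extract a subsequence converging in $\mathcal M$, then invoke uniqueness of limits in $\mathcal T$) rather than arguing directly from relative compactness as you do, but this is only a cosmetic difference; both rest on Theorem~\ref{GenThm} and then pass to the limit in the iteration equation.
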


\begin{proof}
Assume $\{g_i\}_{i\in\mathbb N}$ converges to $g_\infty$. The other case is treated analogously. 
According to  Corollary~\ref{cor_subseq}, $\{g_i\}_{i\in\mathbb N}$ contains a subsequence that converges to some Riemannian metric in $\mathcal M$. This metric must coincide with $g_\infty$ by uniqueness of limits. Thus, $g_\infty$ lies in $\mathcal M$. To see that $g_\infty$ satisfies the Einstein equation, one simply needs to pass to the limit in equality~\eqref{RIEq} as $i$ tends to~$\infty$.
\end{proof}

We propose the following conjecture. It is motivated by Theorems~\ref{s2Thm}~(i) and~\ref{GenThm}~(i), as well as the fact that compact homogeneous Ricci solitons are necessarily Einstein (see~\cite[Theorem~1.4]{MJ15}).

\begin{conj}
Suppose $H$ is maximal in $G$. Let $g_1$ be a Riemannian metric in $\calM$.
\begin{enumerate}[(i)]
\item If a sequence $\{g_i\}_{i\in\NN}\subset\calM$ starting with $g_1$ and satisfying the iteration equation~(\ref{RIEq}) exists and is unique, then this sequence converges to an Einstein metric.

\item
If $r(g_1)=\infty$, then the sequence $\{g_{-i}\}^\infty_{i=-1}\subset\calM$ 
given by the ancient iteration equation~(\ref{RRIEq}) converges to an Einstein metric.
\end{enumerate}
\end{conj}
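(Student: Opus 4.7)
The plan is to combine the relative compactness provided by Theorem~\ref{GenThm} with a dynamical-systems argument that identifies accumulation points as Einstein metrics and then pins down the limit.

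First, I would apply Theorem~\ref{GenThm} to conclude that $\{g_i\}_{i\in\NN}$ in (i), respectively $\{g_{-i}\}_{i=-1}^\infty$ in (ii), has compact closure in $\calM$. Thus every subsequence admits a smoothly convergent subsubsequence whose limit lies in $\calM$. Let $\Omega\subset\calM$ denote the set of all subsequential limits; it is a nonempty compact subset of $\calM$, and the task reduces to showing $\Omega$ is a single point and lies in $\calE$.

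Next, I would show that every element of $\Omega$ is an Einstein metric. In case (ii) this is especially transparent: if $g_{-i_k}\to g_\infty\in\Omega$, then by continuity of the Ricci operator and~\eqref{RRIEq}, $g_{-i_k-1}=\Ric g_{-i_k}\to\Ric g_\infty$, so $\Ric g_\infty\in\Omega$ and, inductively, $\Ric^j g_\infty\in\Omega$ for every $j\ge 0$. Since $\Omega$ is compact in $\calM$, the entire forward Ricci orbit of $g_\infty$ stays bounded and bounded away from degeneracy, and a Lyapunov-functional argument---modeled on Buzano's monotone quantities for the Ricci flow in~\cite{MB14} and adapted to the discrete iteration---would then force $g_\infty$ to be a fixed point, hence Einstein. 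In case (i) the analogous argument proceeds by shifting indices: from $g_{i_k}\to g_\infty$ one extracts a further limit $g_{i_k+1}\to h\in\calM$ and uses $\Ric h=g_\infty$ together with the unique solvability hypothesis to transfer limits between consecutive iterates and identify $h$ as an iterate over $g_\infty$.

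Finally, to upgrade subsequential convergence to full convergence, I would argue that $\Omega$ consists of a single point. In case (i), the uniqueness hypothesis is essential: combining continuity in the initial data with the Lyapunov functional, two distinct accumulation points cannot coexist without violating uniqueness of the forward iteration starting from $g_1$. In case (ii), the $\Ric$-invariance of $\Omega$ established above together with the discreteness up to scale of $G$-invariant Einstein metrics when $H$ is maximal would imply that $\Omega$ is finite, and a continuity argument for the limit set of the discrete orbit---using that the Ricci operator moves consecutive limits of the sequence by arbitrarily small amounts in the tail---would then reduce $\Omega$ to a single point.

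The main obstacle is the construction of a suitable monotone Lyapunov functional on $\calM$ for general $s\ge 2$. For $s=2$, the ratio $\alpha_i=z_1/z_2$ played this r\^ole in Theorems~\ref{s2Thm} and~\ref{AncientThm}, but for larger $s$ no analogous scalar invariant is evident, which is precisely why the conjecture remains open. A plausible candidate would be an adaptation of a scaled total scalar curvature or of the Perelman-type entropies from parabolic Ricci flow theory, but establishing monotonicity along the discrete iteration---which lacks an obvious variational structure in the non-K\"ahler Riemannian setting---is the crux of any full proof.
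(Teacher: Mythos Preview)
The statement you are attempting to prove is explicitly labeled a \emph{conjecture} in the paper; the authors do not provide a proof, and indeed they present it as motivation following Theorems~\ref{s2Thm}~(i) and~\ref{GenThm}~(i). So there is no paper proof to compare your proposal against. Your closing paragraph already recognizes the essential point: the missing ingredient is a monotone quantity along the iteration for general $s\ge2$, and without it the argument cannot be completed. In that sense your proposal is an honest outline of a strategy rather than a proof, and it is consistent with the paper's own assessment that the question is open.

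That said, several steps in your sketch are not just incomplete but rest on assertions that are themselves unproven or possibly false in this generality. The claim that $G$-invariant Einstein metrics are discrete up to scale when $H$ is maximal is not established in the paper (the finiteness in the $s=2$ case came from the polynomial $P(x,x)$ having bounded degree, an argument with no obvious analogue here), and without it your reduction of $\Omega$ to a finite set in case~(ii) collapses. Likewise, in case~(i) the passage from ``two distinct accumulation points'' to ``violation of uniqueness of the forward iteration'' is not a valid inference: uniqueness of the iteration starting at $g_1$ says nothing about the dynamics near a limit point, and continuity of the inverse Ricci map in the initial data is precisely what is unknown in this setting. Finally, the Lyapunov-functional step in case~(ii)---arguing that a bounded forward Ricci orbit forces the starting metric to be Einstein---is itself equivalent in strength to the conjecture you are trying to prove, so invoking it is circular. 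Your instinct to seek an analogue of the ratio $\alpha_i$ is the right one, but as the paper itself indicates, identifying such a quantity for $s\ge2$ is the heart of the problem.
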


\subsection{Organization}
\label{}

This article is organized as follows. Section~\ref{PRCSec} recalls facts on homogeneous metrics and the prescribed Ricci curvature equation from
\cite{MWWZ86,JSPYS97,AP15} that play a central r\^ole in our arguments.
Section~\ref{sec_2sum} contains the proofs of our results for $s=2$: 
Theorem~\ref{s2Thm}~(i) is established in~\S\ref{subsec_2max}, Theorem~\ref{s2Thm}~(ii)
in~\S\ref{subsec_non-max}, and Theorem~\ref{AncientThm} in~\S\ref{ancientSubSec}. 
We conclude Section~\ref{sec_2sum} by giving two explicit examples in~\S\ref{examSubSec}. In Section~\ref{GenSec} we treat the case $s\ge2$ with~$H$ maximal, proving Theorem~\ref{GenThm}.

\section{The prescribed Ricci curvature equation}
\label{PRCSec}

In this section we recall results on homogeneous metrics and the prescribed Ricci curvature problem, following \cite{MWWZ86,JSPYS97,AP15}. We begin with a theorem concerning the existence of solutions to the prescribed (positive-semidefinite) Ricci curvature equation~\cite [Theorem 1.1]{AP15}. The sign of the constant $c$ below is explained by Lemma~\ref{RicPosLemma} below.

\begin{theorem}[prescribed curvature for maximal isotropy]\label{max_uniqThm}
Let $H$ be maximal in $G$. Given a positive-semidefinite nonzero $T\in\mathcal T$, there exists a metric $g\in\mathcal M$ whose Ricci curvature coincides with $cT$ for some~$c>0$.
\end{theorem}

Our arguments will involve
three arrays of numbers
\begin{equation}
\begin{aligned}
\label{dataEq}
\{b_i\}_{i=1}^s,\q  
\{\gamma_{ik}^l\}_{i,k,l=1}^s, \q
\{\zeta_i\}_{i=1}^s, 
\end{aligned}
\end{equation}
associated with the inner product $Q$ and the decomposition~\eqref{m_decomp}. 
To define the first one, denote by $B$ the Killing form on the Lie algebra~$\mathfrak g$. As $\Ad_G(H)|_{\mathfrak m_i}$ is irreducible, there exists $b_i\ge0$ such that
\begin{align}\label{b_def}
B|_{\mathfrak m_i} = -b_iQ|_{\mathfrak m_i}.
\end{align}

Next, let $\Gamma_{ik}^l\in\mathfrak m_i^*\otimes\mathfrak m_k^*\otimes\mathfrak m_l$  
be the tensor
$$
\Gamma_{ik}^l(X,Y):=\pi_l([X,Y]), \q X\in \mathfrak m_i, Y\in \mathfrak m_k,
$$
and denote by $\gamma_{ik}^l$ the squared norm of $\Gamma_{ik}^l$ with respect to $Q$. Thus,
\begin{align}\label{gamma_def}
\gamma_{ik}^l:=
\sum Q([e_{\iota_i},e_{\iota_k}], e_{\iota_l})^2,
\end{align}
where $\{e_j\}_{j=1}^n$ is a $Q$-orthonormal basis  
of $\mathfrak m$ adapted to the decomposition~\eqref{m_decomp},
and where the sum is taken over all $\iota_i, \iota_k$ and $\iota_l$ such that $e_{\iota_i} \in \mathfrak m_i$, $e_{\iota_k} \in \mathfrak m_k$ and $e_{\iota_l} \in \mathfrak m_l$. 
The $\gamma_{ik}^l$ are often called the {\it structure constants} of $M$.
Since $Q$ is $\Ad_G(G)$-invariant, it follows that 
\begin{equation}
\begin{aligned}
\label{QAdInvEq}
Q([X,Y],Z)=Q(Y,[X,Z]),\q X,Y,Z\in\mathfrak g.
\end{aligned}
\end{equation}
It is, therefore, evident 
that $\gamma_{ik}^l$ is symmetric in all three indices. 

For the third piece of notation, fix a $Q$-orthonormal basis $\{w_j\}_{j=1}^q$
of the Lie algebra $\mathfrak h$. The irreducibility of $\Ad_G(H)|_{\mathfrak m_i}$ implies the existence of $\zeta_i\ge0$ such that the Casimir operator 
$$
C_{\mathfrak m_i,Q|_{\mathfrak h}}:=
-\sum_{j=1}^q\ad w_j\circ\ad w_j
$$
acting on $\mathfrak m_i$ satisfies
\begin{align}\label{zeta_def}
C_{\mathfrak m_i,Q|_{\mathfrak h}}
=\zeta_i\, \h{id}.
\end{align}

\begin{remark} {\rm
\label{zetaRemark}
If $\zeta_i$ vanishes, then $\Ad_G(H)$ is trivial on $\mathfrak m_i$ and the dimension of $\mathfrak m_i$ equals~1. To see this, simply observe that 
\begin{align*}
Q(C_{\mathfrak m_i,Q|_{\mathfrak h}}e_k,e_k)=-\sum_{j=1}^qQ([w_j,[w_j,e_k]],e_k)=-\sum_{j=1}^qQ([w_j,e_k],[w_j,e_k]).
\end{align*}
If~\eqref{zeta_def} holds with $\zeta_i=0$, the above formula implies $[w_j,e_k]=0$ for all $k$, which means $\ad w_j=0$ on~$\mathfrak m_i$. 
It follows that $\Ad_G(H)$ is trivial and $\mathfrak m_i$ is 1-dimensional by irreducibility.
Note also that $\zeta_1$ and $\zeta_2$ cannot vanish together when $s=2$ in~\eqref{m_decomp}. This would imply $\dim M=2$,
contradicting~\eqref{dimEq}.  
} \end{remark}

The constants  \eqref{dataEq} are related by the formula \cite[Lemma~(1.5)]{MWWZ86}
\begin{align}\label{Casimir}
b_i=2\zeta_i+\frac1{d_i}\sum_{k,l=1}^s\gamma_{ik}^l,
\end{align}
where $d_i$ is given by~\eqref{diEq}. The following well-known result is adapted from~\cite[Lemma~1.1]{JSPYS97}. We include the proof since our notation and 
assumptions are somewhat different (e.g., we do not assume $G$ is semisimple, and we work with 
$(0,2)$-type and not $(1,1)$-type tensors).

\begin{lemma}
\label{PRCLemma}
Suppose (\ref{InEquivAssumEq}) holds.
Let $g\in \calM$ be given by
\begin{align}\label{gEq}
g=\sum_{i=1}^s x_i\pi_i^* Q, \q x_i>0.
\end{align}
Then
\begin{equation}
\begin{aligned}
\label{RicgEq}
\Ric g=\sum_{i=1}^s  r_i\pi_i^* Q,
\end{aligned}
\end{equation}
where
\begin{align*}
r_i=\frac{b_i}{2}
+\sum_{j,k=1}^s\frac{\gamma_{jk}^i}{4d_i}
\Big(\frac{x_i^2}{x_jx_k}
-2\frac{x_j}{x_k}
\Big).
\end{align*}
\end{lemma}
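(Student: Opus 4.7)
The plan is in two stages. Stage one: argue that $\Ric g$ is diagonal with respect to~\eqref{m_decomp}, so that the scalars $r_i$ are well-defined. Stage two: compute each $r_i$ from the classical formula for the Ricci tensor of an invariant metric on $G/H$.

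For the diagonalization, observe that $\Ric g$ is $G$-invariant (because $g$ is) and hence corresponds under~\eqref{calMEq} to an $\Ad_G(H)$-invariant symmetric bilinear form on $\mathfrak m$, equivalently to a $Q$-self-adjoint $\Ad_G(H)$-equivariant endomorphism $\Phi$ of $\mathfrak m$. By Schur's lemma applied to the restrictions of $\Phi$ between pairs of pairwise inequivalent irreducible summands $\mathfrak m_i,\mathfrak m_k$ (this is where~\eqref{InEquivAssumEq} enters), $\Phi$ vanishes on off-diagonal blocks and acts as a scalar on each diagonal block. Pulling back through $Q$ then yields $\Ric g=\sum_i r_i\pi_i^* Q$ for some scalars $r_i$.

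For the computation of $r_i$, I would apply the standard formula for the Ricci tensor of an invariant metric on $G/H$ (see, e.g., \cite[Chapter~7]{AB87}). Because $Q$ is bi-invariant and $G$ is compact (hence unimodular), the ``mean curvature'' contribution vanishes and one is left with
$$\Ric g(X,X)=-\tfrac12\sum_\alpha g\bigl([X,f_\alpha]_{\mathfrak m},[X,f_\alpha]_{\mathfrak m}\bigr)+\tfrac14\sum_{\alpha,\beta}g\bigl([f_\alpha,f_\beta]_{\mathfrak m},X\bigr)^2-\tfrac12 B(X,X)$$
for any $X\in\mathfrak m$, where $\{f_\alpha\}$ is a $g$-orthonormal basis of $\mathfrak m$. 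I would choose $\{f_\alpha\}$ adapted to~\eqref{m_decomp} via $f_\alpha=e_\alpha/\sqrt{x_{j(\alpha)}}$ for a $Q$-orthonormal adapted basis $\{e_\alpha\}$, sum the identity over a $Q$-orthonormal basis of $\mathfrak m_i$ for $X$ (whose left-hand side totals $d_i r_i$ by the diagonalization step), expand each bracket into its $\mathfrak m_l$-components, and regroup by which summands the three participating basis vectors belong to. Invoking the total symmetry of $\gamma_{jk}^l$ in its three indices (from~\eqref{QAdInvEq}), the bracket-squared sum collapses to $-\tfrac12\sum_{j,k}\gamma_{jk}^i x_j/x_k$, the quartic sum to $\tfrac14\sum_{j,k}\gamma_{jk}^i x_i^2/(x_jx_k)$, and the Killing-form term to $\tfrac{d_i}{2}b_i$ via~\eqref{b_def}. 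Dividing through by $d_i$ produces the stated formula.

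The main obstacle is the bookkeeping in the final computation: tracking the factors of $x_{j(\alpha)}^{\pm 1/2}$ introduced when passing between $Q$- and $g$-orthonormal bases, correctly projecting $[X,f_\alpha]$ onto $\mathfrak m$ (its $\mathfrak h$-component does not enter the Ricci formula), and using the three-fold symmetry of $\gamma_{jk}^l$ to symmetrize double sums over $\alpha,\beta$ so that the independent summation indices collapse into a single index pair $(j,k)$ with the prefactor $1/d_i$. The identity~\eqref{Casimir} is not used directly in this derivation.
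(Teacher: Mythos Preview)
Your proposal is correct and follows essentially the same route as the paper: both invoke the standard Ricci formula from \cite[Corollary~7.38]{AB87} (with the mean-curvature vector vanishing by compactness of $G$), work with a $g$-orthonormal basis adapted to~\eqref{m_decomp}, rescale to a $Q$-orthonormal basis, average over $\mathfrak m_i$, and regroup using the full symmetry of $\gamma_{jk}^l$. Your Stage~one is a slightly more explicit version of the paper's one-line justification that $\Ric g$ is diagonal (the paper simply says $G$-invariance of $\Ric g$ forces~\eqref{RicgEq}, implicitly invoking~\eqref{InEquivAssumEq} and Schur's lemma just as you do).
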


\begin{proof}
Since $g$ is $G$-invariant, so is $\Ric g$, which 
means~\eqref{RicgEq} holds.
The $i$-th component of $\Ric g$ equals
\cite[Corollary~7.38]{AB87} 
\begin{align}
\label{rilastEq}
r_i=-\frac12B(X,X)-\frac12\sum_{j=1}^ng([X,e_j]_\mathfrak m,[X,e_j]_\mathfrak m)+\frac14\sum_{j,k=1}^ng([e_j,e_k]_\mathfrak m,X)^2,
\end{align}
where $X\in\mathfrak m_i$ is any vector such that $Q(X,X)=1$, the subscript $\mathfrak m$ denotes the $g$-orthogonal projection onto $\mathfrak m$, and $\{e_j\}_{j=1}^n$ is a $g$-orthonormal basis of $\mathfrak m$ adapted to the decomposition~\eqref{m_decomp}. Note that the vector $Z$ in 
\cite[Corollary~7.38]{AB87} is identically zero for $G$ admitting a bi-invariant Haar measure
\cite[p. 184]{AB87} (in particular for $G$ compact).

Define
$$I^{(i)}:=\{\iota\,:\,e_{\iota}\in\mathfrak m_i\}.$$ 
In formula~\eqref{rilastEq}, set 
$
X=\sqrt{x_i}e_{\iota_i}
$
for some $\iota_i\in I^{(i)}$. 
By~\eqref{b_def},
\begin{align*}
r_i&=\frac{b_i}2g(e_{\iota_i},e_{\iota_i})-\frac12\sum_{j=1}^nx_ig([e_{\iota_i},e_j]_\mathfrak m,[e_{\iota_i},e_j]_\mathfrak m)+\frac14\sum_{j,k=1}^nx_ig([e_j,e_k]_\mathfrak m,e_{\iota_i})^2 \\ 
&=\frac{b_i}2-\frac12\sum_{j,k=1}^nx_ig( g([e_{\iota_i},e_j],e_k)e_k, g([e_{\iota_i},e_j],e_k)e_k) 
+\frac14\sum_{j,k=1}^nx_ig([e_j,e_k],e_{\iota_i})^2
\\ 
&=\frac{b_i}2-\frac12\sum_{j,k=1}^nx_ig([e_{\iota_i},e_j], e_k)^2+\frac14\sum_{j,k=1}^nx_i^3Q([e_j,e_k],e_{\iota_i})^2.
\end{align*}
Hence,
\begin{align*}
d_ir_i&=\sum_{\iota_i\in I^{(i)}}r_i=
\frac{d_ib_i}2-\frac12\sum_{\iota_i\in I^{(i)}}\sum_{j,k=1}^nx_ig([e_{\iota_i},e_j], e_k)^2+\frac14\sum_{\iota_i\in I^{(i)}}\sum_{j,k=1}^nx_i^3Q([e_j,e_k],e_{\iota_i})^2.
\end{align*}
Observe that the vectors $\{\tilde e_j\}_{j=1}^n$ defined by 
$$
\tilde e_{\iota_l}=\sqrt{x_l}e_{\iota_l},\qquad \iota_l\in I^{(l)},~l\in\{1,\ldots,s\},
$$
form a $Q$-orthonormal basis of $\mathfrak m$. Thus, 
\begin{align*}
r_i&=\frac{b_i}2-\frac1{2d_i}\sum_{j,k=1}^s\frac{x_k}{x_j} \sum Q([\tilde e_{\iota_i},\tilde e_{\iota_j}],\tilde e_{\iota_k})^2+\frac1{4d_i}\sum_{j,k=1}^s\frac{x_i^2}{x_jx_k}\sum Q([\tilde e_{\iota_j},\tilde e_{\iota_k}],\tilde e_{\iota_i})^2
\\ 
&=\frac{b_i}2-\frac1{2d_i}\sum_{j,k=1}^s\frac{x_k}{x_j}\gamma_{ij}^k+\frac1{4d_i}\sum_{j,k=1}^s\frac{x_i^2}{x_jx_k}\gamma_{ij}^k,
\end{align*}
where the sums without limits are taken over all $\iota_i\in I^{(i)}$, $\iota_j\in I^{(j)}$ and $\iota_k\in I^{(k)}$.
\end{proof}

The next lemma is a special case of a claim from the proof of~\cite[Theorem~(2.2)]{MWWZ86}. Recall that we assume $s>1$ in the decomposition~\eqref{m_decomp}.

\begin{lemma}
\label{gammaMaxLemma}
Suppose that $H$ is maximal in~$G$. Then, for any $i\in\{1,\ldots,s\}$, there exists some $k\in\{1,\ldots,s\}\setminus\{i\}$
such that
\begin{equation*}
\begin{aligned}
\gamma_{ii}^k>0.
\end{aligned}
\end{equation*}
\end{lemma}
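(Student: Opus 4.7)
The plan is to argue by contradiction using the characterization of maximality at the level of Lie algebras. Suppose that for some fixed $i \in \{1,\ldots,s\}$ we have $\gamma_{ii}^k = 0$ for every $k \in \{1,\ldots,s\}\setminus\{i\}$. By the definition~\eqref{gamma_def} of the structure constants, this means $\pi_k([X,Y]) = 0$ for all $X,Y\in\mathfrak m_i$ and all $k\neq i$. Using the $Q$-orthogonal decomposition $\mathfrak g = \mathfrak h \oplus \mathfrak m_1 \oplus \cdots \oplus \mathfrak m_s$, this is equivalent to the inclusion
\begin{equation*}
[\mathfrak m_i,\mathfrak m_i]\subseteq \mathfrak h\oplus\mathfrak m_i.
\end{equation*}

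Next I would check that $\mathfrak k := \mathfrak h\oplus\mathfrak m_i$ is a Lie subalgebra of $\mathfrak g$. Indeed, $[\mathfrak h,\mathfrak h]\subseteq \mathfrak h$ since $\mathfrak h$ is a subalgebra; $[\mathfrak h,\mathfrak m_i]\subseteq \mathfrak m_i$ because $\mathfrak m_i$ is $\Ad_G(H)$-invariant (and hence $\ad\mathfrak h$-invariant); and $[\mathfrak m_i,\mathfrak m_i]\subseteq \mathfrak k$ by the displayed inclusion above.

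Finally I would rule out $\mathfrak k = \mathfrak h$ and $\mathfrak k = \mathfrak g$. The first is impossible because $d_i = \dim\mathfrak m_i \in \mathbb N$ forces $\mathfrak m_i \neq \{0\}$. The second is impossible because $s>1$, so there exists $j\neq i$ with $\mathfrak m_j \neq \{0\}$, and $\mathfrak m_j$ is $Q$-orthogonal to $\mathfrak h\oplus\mathfrak m_i$, hence not contained in $\mathfrak k$. Thus $\mathfrak h \subsetneq \mathfrak k \subsetneq \mathfrak g$, contradicting the maximality of $\mathfrak h$ as a Lie subalgebra of $\mathfrak g$.

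There is no essential obstacle here; the argument is a direct translation of the algebraic content of the assumption $\gamma_{ii}^k=0$ into a statement about subalgebras. The only thing to be careful about is the conventions in~\eqref{gamma_def}: one must note that $\gamma_{ii}^k$ being the squared $Q$-norm of the tensor $\Gamma_{ii}^k$ means vanishing of $\gamma_{ii}^k$ is equivalent to vanishing of the projection $\pi_k\circ[\,\cdot\,,\,\cdot\,]$ on $\mathfrak m_i\times\mathfrak m_i$, and this uses positive-definiteness of $Q|_{\mathfrak m_k}$.
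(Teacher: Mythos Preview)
Your proof is correct and follows essentially the same approach as the paper's: both argue that if all $\gamma_{ii}^k$ with $k\ne i$ vanished, then $\mathfrak h\oplus\mathfrak m_i$ would be an intermediate Lie subalgebra, contradicting maximality. The paper states this only for $s=2$ and says the general case is analogous; your version spells out the general argument (including the checks that $[\mathfrak h,\mathfrak m_i]\subseteq\mathfrak m_i$ and that $\mathfrak k$ is strictly intermediate) more carefully.
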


\begin{proof}
We give the proof for the case $s=2$ which generalizes in an obvious way.
The assumption is equivalent to $\mathfrak h$ being a maximal Lie subalgebra of
$\mathfrak g$. In particular, $\mathfrak h\oplus\mathfrak m_1$ cannot be a 
Lie subalgebra of
$\mathfrak g$, so $\pi_2([\mathfrak m_1, \mathfrak m_1])\not=\{0\}$, i.e., $\gamma_{11}^2\not=0$.
Similarly, $\gamma_{22}^1\not=0$ since $\mathfrak h\oplus\mathfrak m_2$ cannot be a Lie subalgebra of
$\mathfrak g$.
\end{proof}

The next result implies that any $G$-invariant metric with definite Ricci curvature 
must actually have positive Ricci curvature if $H$ is maximal.

\begin{lemma}
\label{RicPosLemma}
Suppose that $g\in\mathcal M$ satisfies $\Ric g=cT$ for some $c\in\mathbb R$ and $T\in\mathcal M$. Then $c\ge0$. If $H$ is maximal in $G$, then $c>0$.
\end{lemma}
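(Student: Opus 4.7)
The plan is to invoke Bochner's vanishing theorem for Killing vector fields on compact Riemannian manifolds. The starting observation is that the effective action of $G$ on $M$ by isometries gives an injection of $\mathfrak g$ into the space of Killing vector fields on $(M,g)$: a generator producing the zero field has a trivial one-parameter subgroup and so is zero by effectiveness of the $G$-action. Hence the infinitesimal action identifies $\mathfrak g$ with a subalgebra of the Killing algebra of $(M,g)$, and in particular transitivity gives $\dim \mathfrak g \ge n$.

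For the inequality $c \ge 0$, I would argue by contradiction. If $c < 0$, then $\Ric g = cT$ is negative definite everywhere, because $T \in \mathcal M$ is positive definite. Bochner's theorem then forces every Killing vector field on $(M,g)$ to vanish. But then $\mathfrak g = 0$, contradicting $\dim \mathfrak g \ge n \ge 3$.

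For strict positivity when $H$ is maximal, suppose instead that $c = 0$, so $\Ric g \equiv 0$. Bochner's theorem then shows that every Killing field on $(M,g)$ is parallel; since a parallel vector field on a connected manifold is determined by its value at a single point, the space of Killing fields has dimension at most $n$, and thus $\dim \mathfrak g \le n$. Combined with the identity $\dim \mathfrak g = \dim \mathfrak h + n$ coming from $M = G/H$, this forces $\mathfrak h = 0$. The maximality hypothesis then requires $\mathfrak g$ to admit no proper nonzero Lie subalgebra; but any nonzero element of $\mathfrak g$ spans a one-dimensional abelian subalgebra, so $\dim \mathfrak g \le 1$, contradicting $n \ge 3$.

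I do not expect a serious obstacle: the argument is essentially a single application of Bochner combined with an elementary consequence of the maximality of $\mathfrak h = 0$ inside $\mathfrak g$. The one point worth verifying carefully is the injectivity of the infinitesimal $G$-action into the Killing algebra, which follows cleanly from effectiveness and connectedness of $G$, together with the standard identification of parallel vector fields with their values at a fixed point on a connected manifold.
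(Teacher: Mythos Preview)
Your argument is correct. The $c<0$ case matches the paper's proof essentially verbatim. For $c=0$, however, you take a genuinely different route.

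The paper argues as follows: when $\Ric g=0$ on a compact manifold, the identity component of the isometry group is a torus (this is the refined Bochner statement in \cite[Theorem~1.84]{AB87}); since $G$ embeds into the isometry group, $G$ is abelian, so all structure constants $\gamma_{ij}^k$ vanish, contradicting Lemma~\ref{gammaMaxLemma}. Your argument instead uses only the weaker consequence of Bochner that Killing fields are parallel, extracts the dimension bound $\dim\mathfrak g\le n$, forces $\mathfrak h=0$, and then exploits maximality directly: the zero subalgebra can be maximal only in a Lie algebra of dimension at most~$1$, contradicting $n\ge3$.

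Your approach is more self-contained---it avoids both the torus structure result and the auxiliary Lemma~\ref{gammaMaxLemma}---and in particular does not rely on the standing assumption $s>1$. The paper's route, on the other hand, ties the obstruction to the structure constants $\gamma_{ij}^k$, which is thematically natural given their central role in the rest of the analysis.
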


\begin{proof}
If $c<0$, Bochner's theorem \cite[Theorem~1.84]{AB87} implies that there are no Killing fields, so $G$
is trivial and $M$ is a point. By the same theorem, if $c=0$, then the connected component of the identity in the isometry group of $(M,g)$ is a torus. Since every Lie subgroup of a torus is abelian, $G$ must be abelian. 
Thus, $\gamma_{ij}^k=0$ for all $i,j,k$, contradicting Lemma \ref{gammaMaxLemma}.
\end{proof}

Whenever $s=2$ and $H$ is not maximal in $G$,
the prescribed (positive-semidefinite) Ricci curvature equation is completely understood~\cite[Proposition 3.1]{AP15}. We only need to state the result for prescribed positive-definite Ricci curvature. Recall that $\zeta_1,\zeta_2\ge0$ are given by~\eqref{zeta_def}. 

\begin{proposition}
\label{prop_Artem}
Suppose $s=2$ in~(\ref{m_decomp}). Consider a metric $T\in\mathcal M$ given by~(\ref{T_def}). Let $G$ have a connected Lie subgroup $K$ satisfying~(\ref{HKGEq}) and~(\ref{km1hEq}). Assume~(\ref{QHypEq}) holds. A metric $g\in\mathcal M$ such that $\Ric g=cT$ for some $c>0$ exists
if and only if
\begin{align}\label{2sum_cond}
\bigg(\zeta_2+\frac{\gamma_{22}^2}{4d_2}+\frac{\gamma_{22}^1}{d_2}\bigg)z_1>\bigg(\zeta_1+\frac{\gamma_{11}^1}{4d_1}\bigg)z_2.
\end{align}
When $g$ exists, it is unique up to scaling.
\end{proposition}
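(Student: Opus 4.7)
The plan is to reduce the prescribed Ricci equation to a single quadratic in the ratio $t := x_1/x_2$, using Lemma \ref{PRCLemma} together with the simplifications forced by the intermediate subalgebra. The crucial preliminary observation is that $\mathfrak{k} = \mathfrak{h}\oplus\mathfrak{m}_1$ being a Lie subalgebra of $\mathfrak{g}$ gives $[\mathfrak{m}_1,\mathfrak{m}_1]\subseteq \mathfrak{k}$, hence $\pi_2([\mathfrak{m}_1,\mathfrak{m}_1])=\{0\}$, which means $\gamma_{11}^2=0$. By the total symmetry of $\gamma_{ik}^l$ in its three indices, this also forces $\gamma_{12}^1 = \gamma_{21}^1 = 0$. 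The only structure constants that can be nonzero are therefore $\gamma_{11}^1$, $\gamma_{22}^2$, and the common value $\gamma_{22}^1 = \gamma_{12}^2 = \gamma_{21}^2$; hypothesis \eqref{QHypEq} is precisely $\gamma_{22}^1 > 0$.

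Writing $g = x_1\pi_1^*Q + x_2\pi_2^*Q$, I would apply Lemma \ref{PRCLemma}, collect the finitely many surviving terms, and substitute \eqref{Casimir} for $b_1, b_2$. After cancellation the output is
\begin{align*}
r_1 &= \zeta_1 + \frac{\gamma_{11}^1}{4d_1} + \frac{\gamma_{22}^1}{4d_1}\,t^2,\\
r_2 &= \zeta_2 + \frac{\gamma_{22}^2}{4d_2} + \frac{\gamma_{22}^1}{d_2} - \frac{\gamma_{22}^1}{2d_2}\,t,
\end{align*}
where $t = x_1/x_2$. Since $\Ric g$ depends only on $t$ and is unchanged under an overall rescaling of $g$, the equation $\Ric g = cT$ for some $c>0$ is equivalent to the two scalar conditions $r_1/z_1 = r_2/z_2$ (determining $t$) and $r_1 > 0$ (forcing $c > 0$). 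The second is automatic from $\gamma_{22}^1 > 0$ and the non-negativity of $\zeta_1$ and $\gamma_{11}^1$. So existence and uniqueness up to scaling reduce to whether the first equation admits a unique positive root.

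Rearranging $r_1/z_1 = r_2/z_2$ gives
\[
\frac{\gamma_{22}^1}{4d_1 z_1}\,t^2 + \frac{\gamma_{22}^1}{2d_2 z_2}\,t + \frac{1}{z_1}\!\left(\zeta_1 + \frac{\gamma_{11}^1}{4d_1}\right) - \frac{1}{z_2}\!\left(\zeta_2 + \frac{\gamma_{22}^2}{4d_2} + \frac{\gamma_{22}^1}{d_2}\right) = 0.
\]
The $t^2$ and $t$ coefficients are strictly positive since $\gamma_{22}^1 > 0$, so the quadratic has a positive root if and only if its constant term is strictly negative; that inequality is exactly \eqref{2sum_cond}. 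When \eqref{2sum_cond} holds, the product of roots is negative, which immediately gives uniqueness of the positive root and hence uniqueness of $g$ up to scale.

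The main obstacle is nothing conceptually deep but purely computational bookkeeping: carrying through the sum in Lemma \ref{PRCLemma} with the surviving constants, using \eqref{Casimir} to clean up the $b_i/2$ contributions, and verifying that the resulting constant term of the quadratic matches, without spurious factors, the inequality \eqref{2sum_cond} as stated.
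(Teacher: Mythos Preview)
The paper does not prove this proposition; it is quoted from~\cite[Proposition~3.1]{AP15}. Your argument is the natural one and the computations are correct: with $\gamma_{11}^2=0$ forced by~\eqref{km1hEq} and $\gamma_{22}^1>0$ by~\eqref{QHypEq}, the equations from Lemma~\ref{PRCLemma} collapse exactly to the quadratic you wrote, and the sign analysis of its constant term is precisely~\eqref{2sum_cond}.

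One point to make explicit. You write $g=x_1\pi_1^*Q+x_2\pi_2^*Q$ and invoke Lemma~\ref{PRCLemma}, but that lemma carries hypothesis~\eqref{InEquivAssumEq}, which the proposition as stated does not. For the existence direction this is harmless: you exhibit a diagonal~$g$. For the ``only if'' direction and for uniqueness, however, you are tacitly assuming that \emph{every} solution $g\in\mathcal M$ is diagonal with respect to the decomposition fixed by~\eqref{km1hEq}. Under~\eqref{InEquivAssumEq} this is automatic, and since that assumption is in force throughout Section~\ref{sec_2sum} (the only place the proposition is invoked), your argument is sufficient for the paper's needs. If you want the proposition in the generality stated, you should either argue that the hypotheses already force the two summands to be inequivalent, or show directly that any $g$ with $\Ric g$ diagonal must itself be block-diagonal along $\mathfrak m_1\oplus\mathfrak m_2$.
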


\begin{remark}
{\rm
\label{}
It is common in the literature on compact homogeneous spaces 
to  make the assumption that the fundamental group 
is finite (see, e.g., \cite{WDMK08,Wang2012}).
Theorem~\ref{max_uniqThm} and Proposition~\ref{prop_Artem} imply that all homogeneous spaces considered in this article have finite fundamental groups since they admit  metrics of positive Ricci curvature. 
}
\end{remark}

\section{Two irreducible isotropy summands}\label{sec_2sum}

This section contains the proofs of Theorems~\ref{s2Thm}
(Ricci iteration for $s=2$) and~\ref{AncientThm} (ancient Ricci iterations for $s=2$), as well as two examples. Since it is easy to get lost in the technical lemmas we provide here a quick roadmap to this section: 
\S\ref{polySubS} gives several easy results relating the prescribed curvature equation to zeros of polynomials when $s=2$;
\S\ref{subsec_2max} proves Theorem~\ref{s2Thm}~(i) concerning
the Ricci iteration with maximal isotropy, the main technical tool being Lemma \ref{lem_max} that provides monotonicity of the zeros of the polynomials from \S\ref{polySubS};
\S\ref{subsec_non-max} proves Theorem~\ref{s2Thm}~(ii) concerning
the Ricci iteration with non-maximal isotropy, where here Lemma \ref{lem_nonmax} provides the desired monotonicity;
\S\ref{ancientSubSec} proves Theorem~\ref{AncientThm} concerning
the classification of ancient Ricci iterations, making use of both
Lemmas \ref{lem_max} and  \ref{lem_nonmax};
\S\ref{examSubSec} concludes this section with a collection of examples.

\subsection{Polynomial equations for the prescribed curvature problem}
\lb{polySubS}

We assume the number $s$ in~\eqref{m_decomp} equals~2. Thus, $\mathfrak m$ admits a $Q$-orthogonal $\Ad_G(H)$-invariant decomposition
\begin{align*}
\mathfrak m=\mathfrak m_1\oplus\mathfrak m_2
\end{align*}
such that $\Ad_G(H)|_{\mathfrak m_1}$ and $\Ad_G(H)|_{\mathfrak m_2}$ are irreducible.

As in~\eqref{gEq} and~\eqref{T_def}, we will typically use the notation $x_i$ for the components of a Riemannian metric $g\in\mathcal M$ whose Ricci curvature is prescribed and the notation $z_i$ for the components of the ``Ricci candidate"~$T$. The letter $\alpha$ (possibly with a subscript) will usually stand for the ratio of the two components of a tensor.

We further impose the assumption \eqref{InEquivAssumEq}. The formula for the Ricci curvature in this case simplifies.

\begin{lemma}
\label{PRCs2Lemma}
Suppose that $s=2$ in~(\ref{m_decomp}) and that~(\ref{InEquivAssumEq}) holds.
If $g\in\mathcal M$ is given by~(\ref{gEq}), then $\Ric g$ satisfies~(\ref{RicgEq}) with
\begin{align}
\label{PRC_alg_max}
r_1 &=\frac{b_1}2-\frac{\gamma_{11}^1}{4d_1}-\frac{\gamma_{22}^1}{2d_1}+\frac{\gamma_{22}^1}{4d_1}\alpha^2-\frac{\gamma_{11}^2}{2d_1}\frac1{\alpha}, \notag
\\
r_2 &=\frac{b_2}2-\frac{\gamma_{22}^2}{4d_2}-\frac{\gamma_{11}^2}{2d_2}+\frac{\gamma_{11}^2}{4d_2}\frac1{\alpha^2}-\frac{\gamma_{22}^1}{2d_2}\alpha,
\end{align}
where $\alpha=\frac{x_1}{x_2}$.
\end{lemma}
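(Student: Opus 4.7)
The plan is to derive Lemma~\ref{PRCs2Lemma} as a direct specialization of Lemma~\ref{PRCLemma} to $s=2$, using the full symmetry of the structure constants $\gamma_{ik}^l$ in their three indices (as observed after \eqref{QAdInvEq}). Since the assumptions~(\ref{InEquivAssumEq}) carry over, Lemma~\ref{PRCLemma} immediately gives that $\Ric g$ is diagonal in the decomposition $\mathfrak m=\mathfrak m_1\oplus\mathfrak m_2$ with components
\[
r_i=\frac{b_i}{2}+\sum_{j,k=1}^{2}\frac{\gamma_{jk}^{i}}{4d_i}\left(\frac{x_i^2}{x_jx_k}-2\frac{x_j}{x_k}\right),\qquad i=1,2.
\]
The task is thus purely algebraic: expand the four terms of the double sum, substitute $\alpha=x_1/x_2$, and simplify.

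For $i=1$, I will list the four contributions according to $(j,k)\in\{1,2\}^2$. The $(1,1)$-term contributes $-\gamma_{11}^{1}/(4d_1)$; the $(2,2)$-term contributes $\gamma_{22}^{1}\bigl(\alpha^2-2\bigr)/(4d_1)$; the cross terms $(1,2)$ and $(2,1)$ combine using $\gamma_{12}^{1}=\gamma_{21}^{1}=\gamma_{11}^{2}$ (the total symmetry of $\gamma$) to give
\[
\frac{\gamma_{11}^{2}}{4d_1}\left(\frac{x_1}{x_2}-2\frac{x_1}{x_2}+\frac{x_1}{x_2}-2\frac{x_2}{x_1}\right)=-\frac{\gamma_{11}^{2}}{2d_1}\cdot\frac{1}{\alpha}.
\]
Adding these and separating the constant $-\gamma_{22}^{1}/(2d_1)$ from the $\alpha^2$-term yields the stated expression for $r_1$. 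The formula for $r_2$ is obtained in an entirely analogous way by swapping the roles of the indices $1$ and $2$ and noting that $\alpha$ is replaced by $1/\alpha$, together with the identity $\gamma_{22}^{1}=\gamma_{12}^{2}=\gamma_{21}^{2}$.

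There is no real obstacle here; the only thing to be careful about is the combinatorics of the cross terms, i.e., correctly invoking the three-index symmetry of $\gamma_{ik}^{l}$ so that the mixed contributions coming from $(j,k)=(1,2)$ and $(j,k)=(2,1)$ cancel in the right way to produce the single $1/\alpha$ term (respectively $\alpha$ term for $r_2$). Once this bookkeeping is done, the result is immediate.
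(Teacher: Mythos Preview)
Your proof is correct and follows essentially the same approach as the paper: both specialize Lemma~\ref{PRCLemma} to $s=2$, expand the double sum, and use the full symmetry $\gamma_{12}^1=\gamma_{21}^1=\gamma_{11}^2$ to simplify the cross terms. The only cosmetic difference is that the paper separates the sum $\sum_{j,k}\gamma_{jk}^1\big(\frac{x_1^2}{x_jx_k}-2\frac{x_j}{x_k}\big)$ into its two parts before expanding, whereas you group by $(j,k)$-pairs; the bookkeeping is otherwise identical.
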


\begin{proof}
The assumption \eqref{InEquivAssumEq} implies that
all $G$-invariant tensors are completely determined by two real numbers, as in 
\eqref{T_def}. Thus,~\eqref{RicgEq} holds.
Using Lemma~\ref{PRCLemma}, we compute
\begin{align*}
r_1&=
\frac{b_1}{2}
+\frac1{4d_1}\sum_{j,k=1}^2\frac{x_1^2}{x_jx_k}\gamma_{jk}^1
-\frac1{2d_1}\sum_{j,k=1}^2\frac{x_j}{x_k}\gamma_{jk}^1
\\ &=\frac{b_1}{2}
+\frac1{4d_1}\left(\gamma_{11}^1+\frac{x_1^2}{x_2^2}\gamma_{22}^1+
2\frac{x_1}{x_2}\gamma_{12}^1\right)
-\frac1{2d_1}\left(\gamma_{11}^1+\gamma_{22}^1+\frac{x_1}{x_2}\gamma_{12}^1+\frac{x_2}{x_1}\gamma_{21}^1\right)
\\ &=\frac{b_1}2-\frac{\gamma_{11}^1}{4d_1}-\frac{\gamma_{22}^1}{2d_1}+\frac{\gamma_{22}^1}{4d_1}\alpha^2-\frac{\gamma_{11}^2}{2d_1}\frac1{\alpha},
\end{align*}
as desired. The formula for $r_2$ is proved analogously.
\end{proof}

The next result reduces the prescribed Ricci curvature equation to a single polynomial equation. It provides a simple test for whether tensors given by~(\ref{gEq}) and~(\ref{T_def}) form a metric-curvature pair. As we explain in Lemma~\ref{RicPosEinLemma}, the roots of the polynomial $P(x,y)$ appearing in the next result are related to Einstein metrics on~$M$.

\begin{lemma}
\label{PRCPEqLemma}
Suppose that $s=2$ in~(\ref{m_decomp}) and that~(\ref{InEquivAssumEq}) holds.
Let $g\in\mathcal M$ and $T\in \mathcal M$ be given by~(\ref{gEq}) and~(\ref{T_def}). 
The equality $\Ric g=cT$ holds for some $c\ge 0$ if and only if
$P(x_1/x_2,z_1/z_2)=0$, where 
\begin{align}\label{Pxx_def} 
P(x,y):=d_2\gamma_{22}^1x^4+2d_1\gamma_{22}^1yx^3+(\theta_1
-y\theta_2)x^2-2d_2\gamma_{11}^2x-d_1\gamma_{11}^2y
\end{align}
and
\begin{align}\label{th_def} 
\theta_1:=2d_1d_2b_1-d_2\gamma_{11}^1-2d_2\gamma_{22}^1, \qq
\theta_2:=2d_1d_2b_2-d_1\gamma_{22}^2-2d_1\gamma_{11}^2.
\end{align}
\end{lemma}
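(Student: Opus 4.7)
The plan is a direct reduction: thanks to the assumption~\eqref{InEquivAssumEq}, any $G$-invariant symmetric $(0,2)$-tensor is diagonal in the decomposition $\mathfrak m=\mathfrak m_1\oplus\mathfrak m_2$, so $\Ric g$ and $T$ are both of the diagonal form appearing in~\eqref{RicgEq} and~\eqref{T_def}. Consequently, $\Ric g=cT$ for some $c\in\RR$ if and only if
\[
r_1 z_2-r_2 z_1=0,
\]
and the whole task reduces to rewriting this single scalar equation as $P(x_1/x_2,z_1/z_2)=0$.

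To carry this out, I would substitute the expressions for $r_1$ and $r_2$ given by Lemma~\ref{PRCs2Lemma}, set $\alpha:=x_1/x_2$ and $\beta:=z_1/z_2$, and multiply $r_1-\beta r_2$ through by $4d_1 d_2\alpha^2$ to clear the denominators $d_1$, $d_2$, $\alpha$, and $\alpha^2$ that appear. Collecting terms by powers of $\alpha$ produces
\begin{align*}
4d_1 d_2\alpha^2 (r_1-\beta r_2)
&=d_2\gamma_{22}^1\alpha^4+2d_1\gamma_{22}^1\beta\,\alpha^3\\
&\quad+\bigl[(2d_1 d_2 b_1-d_2\gamma_{11}^1-2d_2\gamma_{22}^1)-\beta(2d_1 d_2 b_2-d_1\gamma_{22}^2-2d_1\gamma_{11}^2)\bigr]\alpha^2\\
&\quad-2d_2\gamma_{11}^2\alpha-d_1\gamma_{11}^2\beta,
\end{align*}
which is exactly $P(\alpha,\beta)$ once the coefficients of $\alpha^2$ are recognized as $\theta_1-\beta\theta_2$ with $\theta_1,\theta_2$ given by~\eqref{th_def}. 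Since $\alpha>0$ and $d_1,d_2>0$, the vanishing of $r_1-\beta r_2$ is equivalent to $P(\alpha,\beta)=0$.

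For the equivalence with the stronger condition ``$c\ge 0$'': if $\Ric g=cT$ with $c\ge 0$, then in particular $r_1 z_2=r_2 z_1$, so $P(\alpha,\beta)=0$. Conversely, if $P(\alpha,\beta)=0$, then $r_1 z_2=r_2 z_1$; setting $c:=r_2/z_2=r_1/z_1$ (at least one of $z_1,z_2$ is positive, so this is well defined by symmetry of the argument) gives $\Ric g=cT$, and Lemma~\ref{RicPosLemma} forces $c\ge 0$.

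There is no real obstacle here: the argument is purely algebraic and the only care needed is to keep track of signs and denominators when reducing $r_1-\beta r_2=0$ to a polynomial identity, and to invoke Lemma~\ref{RicPosLemma} to upgrade ``some $c\in\RR$'' to ``some $c\ge 0$''.
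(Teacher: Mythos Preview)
Your proof is correct and follows essentially the same approach as the paper: both arguments use Lemma~\ref{PRCs2Lemma} to express $r_1,r_2$, reduce the proportionality $\Ric g=cT$ to the single scalar relation $r_1=\beta r_2$ (equivalently $r_1z_2=r_2z_1$), clear denominators by multiplying through by $4d_1d_2\alpha^2$, and then invoke Lemma~\ref{RicPosLemma} in the converse direction to obtain $c\ge0$. One small cosmetic point: since $T\in\mathcal M$, both $z_1$ and $z_2$ are strictly positive, so your parenthetical ``at least one of $z_1,z_2$ is positive'' can be strengthened accordingly.
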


\begin{proof}
Suppose that $\Ric g=cT$ for some $c\ge0$. Lemma~\ref{PRCs2Lemma} implies
that
\begin{align}
\label{PRC_alg_max2}
\frac{b_1}2-\frac{\gamma_{11}^1}{4d_1}-\frac{\gamma_{22}^1}{2d_1}+\frac{\gamma_{22}^1}{4d_1}\alpha^2-\frac{\gamma_{11}^2}{2d_1}\frac1{\alpha}&=cz_1, \notag
\\
\frac{b_2}2-\frac{\gamma_{22}^2}{4d_2}-\frac{\gamma_{11}^2}{2d_2}+\frac{\gamma_{11}^2}{4d_2}\frac1{\alpha^2}-\frac{\gamma_{22}^1}{2d_2}\alpha&=cz_2,
\end{align}
where $\alpha=\frac{x_1}{x_2}$. Since $T\in\mathcal M$, the numbers $z_1,z_2$ are positive, and so 
\begin{equation}
\begin{aligned}
\label{PrearraEq}
\frac{b_1}2-\frac{\gamma_{11}^1}{4d_1}-\frac{\gamma_{22}^1}{2d_1}+\frac{\gamma_{22}^1}{4d_1}\alpha^2-\frac{\gamma_{11}^2}{2d_1}\frac1{\alpha}
=\frac{z_1}{z_2}\Big(
\frac{b_2}2-\frac{\gamma_{22}^2}{4d_2}-\frac{\gamma_{11}^2}{2d_2}+\frac{\gamma_{11}^2}{4d_2}\frac1{\alpha^2}-\frac{\gamma_{22}^1}{2d_2}\alpha
\Big).
\end{aligned}
\end{equation}
Multiplying by $4d_1d_2\a^2$ and rearranging yields $P(x_1/x_2,z_1/z_2)=0$.

Conversely, suppose that $P(x_1/x_2,z_1/z_2)=0$, and hence \eqref{PrearraEq} holds. Define $c$ as the left-hand side of~\eqref{PrearraEq} divided by $z_1$.
Then, it follows from~\eqref{PrearraEq} that~\eqref{PRC_alg_max2} holds.
By Lemma~\ref{RicPosLemma}, $c\ge0$.
\end{proof}

Denote, for $x>0$,
\begin{equation}
\begin{aligned}
\label{gxEq}
g^x:=x\pi_1^* Q+\pi_2^* Q\in\calM.
\end{aligned}
\end{equation}
The following result characterizes Einstein metrics in $\mathcal M$; cf.~\cite[\S3]{MWWZ86} and~\cite[Section~3]{WDMK08}.

\begin{lemma}
\label{RicPosEinLemma}
Suppose that $s=2$ in~(\ref{m_decomp}) and that~(\ref{InEquivAssumEq}) holds. Then $P(x,x)=0$ if and only if $g^x$ is Einstein with $\Ric g^x=cg^x$ for some $c\ge0$.
\end{lemma}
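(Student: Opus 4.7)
The plan is to recognize this as an immediate application of Lemma~\ref{PRCPEqLemma} with the special choice $T = g^x$. Writing $g^x$ in the form~\eqref{gEq} corresponds to taking $x_1 = x$ and $x_2 = 1$, so $x_1/x_2 = x$. Similarly, viewing $g^x$ as an element of $\mathcal M$ of the form~\eqref{T_def} gives $z_1 = x$ and $z_2 = 1$, so $z_1/z_2 = x$.

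Next, the Einstein condition $\Ric g^x = c g^x$ is precisely the condition $\Ric g^x = cT$ appearing in Lemma~\ref{PRCPEqLemma} with $T = g^x$. Hence, by that lemma, $\Ric g^x = cg^x$ for some $c \ge 0$ if and only if
$$
P\!\left(\frac{x_1}{x_2}, \frac{z_1}{z_2}\right) = P(x,x) = 0.
$$
This gives both directions simultaneously, with the sign condition $c \ge 0$ being automatic by Lemma~\ref{RicPosLemma}.

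There is no real obstacle here; the statement is essentially a diagonal specialization of Lemma~\ref{PRCPEqLemma}. The only point to check carefully is that $g^x \in \mathcal M$ (so that $z_1, z_2 > 0$ as required for the lemma to apply), which is clear from $x > 0$. One may optionally remark, for later use, that $P(x,x)$ simplifies to a polynomial of lower degree in $x$: substituting $y = x$ into~\eqref{Pxx_def} yields
$$
P(x,x) = d_2 \gamma_{22}^1 x^4 + 2 d_1 \gamma_{22}^1 x^4 + (\theta_1 - x\theta_2) x^2 - 2 d_2 \gamma_{11}^2 x - d_1 \gamma_{11}^2 x,
$$
which after collecting terms becomes a cubic in $x$, confirming that there are only finitely many Einstein metrics in $\mathcal M$ up to scaling and thereby justifying the claim made just after~\eqref{alphapmEq}.
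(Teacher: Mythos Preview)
Your proof is correct and follows the same approach as the paper, which simply writes ``This follows from Lemma~\ref{PRCPEqLemma}.'' Your additional explanation makes the specialization explicit; note only that in your final optional remark, $P(x,x)$ is a quartic that factors as $x$ times a cubic, not literally a cubic.
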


\begin{proof}
This follows from Lemma~\ref{PRCPEqLemma}.
\end{proof}

We state two identities that follow directly from~\eqref{Casimir}.
In particular, they show that $\th_1,\th_2$ are non-negative.

\begin{lemma}
\label{thetaLemma}
One has (recall  (\ref{th_def})) 
\begin{align*} 
\theta_1=
4d_1d_2\zeta_1+d_2\gamma_{11}^1+4d_2\gamma_{11}^2, \qq
\theta_2=
4d_1d_2\zeta_2+d_1\gamma_{22}^2+4d_1\gamma_{22}^1.
\end{align*}
\end{lemma}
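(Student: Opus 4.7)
The plan is to prove the two identities by direct substitution of the Casimir formula \eqref{Casimir} into the definitions \eqref{th_def}, exploiting the full symmetry of $\gamma_{ik}^l$ in all three indices that was established after~\eqref{QAdInvEq}.

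First I would specialize \eqref{Casimir} to $s=2$ for $i=1$:
\begin{equation*}
b_1 = 2\zeta_1 + \frac{1}{d_1}\bigl(\gamma_{11}^1+\gamma_{11}^2+\gamma_{12}^1+\gamma_{12}^2\bigr).
\end{equation*}
Using the symmetry $\gamma_{ik}^l=\gamma_{ki}^l=\gamma_{il}^k$ (any permutation), one has $\gamma_{12}^1=\gamma_{11}^2$ and $\gamma_{12}^2=\gamma_{22}^1$, so this simplifies to
\begin{equation*}
b_1 = 2\zeta_1 + \frac{1}{d_1}\bigl(\gamma_{11}^1+2\gamma_{11}^2+\gamma_{22}^1\bigr).
\end{equation*}

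Next I would multiply through by $2d_1d_2$ and subtract the correction terms in \eqref{th_def}:
\begin{equation*}
\theta_1 = 2d_1d_2b_1 - d_2\gamma_{11}^1 - 2d_2\gamma_{22}^1 = 4d_1d_2\zeta_1 + 2d_2\gamma_{11}^1 + 4d_2\gamma_{11}^2 + 2d_2\gamma_{22}^1 - d_2\gamma_{11}^1 - 2d_2\gamma_{22}^1,
\end{equation*}
which collapses to the advertised expression $\theta_1 = 4d_1d_2\zeta_1 + d_2\gamma_{11}^1 + 4d_2\gamma_{11}^2$. The formula for $\theta_2$ would be obtained by swapping the roles of the indices $1$ and $2$ throughout the same computation. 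Finally, since $\zeta_i\ge 0$ by \eqref{zeta_def} and the structure constants $\gamma_{ik}^l$ are non-negative by their definition \eqref{gamma_def} as squared norms, the non-negativity of $\theta_1,\theta_2$ claimed in the statement follows immediately.

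There is essentially no obstacle here; the only mild subtlety is making sure that the symmetry of $\gamma_{ik}^l$ is invoked correctly so that the four cross terms in $\sum_{k,l=1}^2\gamma_{1k}^l$ collapse into the stated linear combination of $\gamma_{11}^1,\gamma_{11}^2,\gamma_{22}^1$. Once that bookkeeping is done, the identities are a one-line algebraic consequence of \eqref{Casimir}.
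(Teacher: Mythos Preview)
Your proof is correct and follows exactly the approach the paper indicates: the paper simply asserts that the identities ``follow directly from~\eqref{Casimir}'' without writing out any details, and your computation is precisely the one-line substitution (plus the index symmetry $\gamma_{ik}^l=\gamma_{il}^k$) that this remark is pointing to.
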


\subsection{Ricci iteration with maximal isotropy}\label{subsec_2max}

Let us prove Theorem~\ref{s2Thm}~(i). We assume $H$ is maximal in~$G$. 
According to~\cite{MWWZ86}, 
in this case, a 
$G$-invariant Einstein metric on $M$ is known to exist. Since $P(x,x)$ factors as $x$ times a cubic polynomial, Lemma~\ref{RicPosEinLemma} implies that there are at most three such metrics, up to scaling; cf.~\cite[\S3]{WDMK08}.

\begin{remark} {\rm
\label{rem_iden_max}
Suppose $g_\infty$ is the limit of the sequence $\{g_i\}_{i\in\mathbb N}$ in Theorem~\ref{s2Thm}~(i). 
When there exist more than one $G$-invariant Einstein metric (up to scaling) on $M$, the proof we are about to present identifies $g_\infty$. More precisely, suppose $\eps_1\le \eps_2\le \eps_3$ are positive numbers such that $\Ric g^{\epsilon_i}= c_ig^{\epsilon_i}$ with $c_i>0$ for all $i\in\{1,2,3\}$.
Assume every $x>0$ satisfying the equation $\Ric g^x= cg^x$ for some $c>0$ coincides with $\epsilon_1$, $\epsilon_2$ or $\epsilon_3$. In the notation of~\eqref{eq_alpha_etc} and~\eqref{alphaiEq}, if 
$\a_{T}\in(0,\eps_1)$, then $g_\infty$ is proportional to $g^{\eps_1}$; if $\a_{T}\in(\eps_3,\infty)$, then $g_\infty$ is proportional to $g^{\eps_3}$; if $\a_{T}\in(\eps_i,\eps_{i+1})$ ($i\in\{1,2\}$), then $g_\infty$ is proportional to $g^{\eps_i}$ or $g^{\eps_{i+1}}$ depending on whether $\a_2<\a_T$
or $\a_2>\a_T$, respectively.
} \end{remark}

\begin{proof}[Proof of Theorem~\ref{s2Thm} (i).]
The existence of the sequence follows from Theorem~\ref{max_uniqThm}, 
while the uniqueness is the content of Lemma~\ref{uniqHMaxLemma} established below. To prove the convergence, suppose
\begin{equation}
\begin{aligned}
\label{giEq}
g_i=x_1^{(i)}\pi_1^*Q+x_2^{(i)}\pi_2^*Q,
\end{aligned}
\end{equation}
and let
\begin{equation}
\begin{aligned}
\label{alphaiEq}
\alpha_i:=x_1^{(i)}/x_2^{(i)}.
\end{aligned}
\end{equation}
Lemma~\ref{lem_max} below demonstrates that $\{\alpha_i\}_{i\in\NN}$ is monotone, bounded below by
$\min\{\alpha_1,\alpha_-\}$
and bounded above by
$\max\{\alpha_1,\alpha_+\}$ (recall~\eqref{alphapmEq}).
Consequently, this sequence has a {\it positive} limit $\alpha_\infty$. 
Thus, if the limits
$$
x_1^{(\infty)}:=\lim_{i\to\infty} x_1^{(i)}
\h{\ \ and \ \ }
x_2^{(\infty)}:=\lim_{i\to\infty} x_2^{(i)}
$$ 
both exist and are finite, then they must be simultaneously positive or zero.
Note that 
$$
\Ric(g_{i+1}/x_2^{(i+1)})=
\Ric g_{i+1}=g_i,
$$
i.e.,
\begin{equation*}
\begin{aligned}
\Ric(\alpha_{i+1}\pi_1^*Q+\pi_2^*Q)=x_1^{(i)}\pi_1^*Q+x_2^{(i)}\pi_2^*Q.
\end{aligned}
\end{equation*}
This shows that the limits indeed both exist and are finite, as the left-hand side has a well-defined limit
by Lemma~\ref{PRCs2Lemma} (as $\a_\infty>0$). We thus have
$$
\Ric (\alpha_\infty\pi_1^*Q+\pi_2^*Q)=x_1^{(\infty)}\pi_1^*Q+x_2^{(\infty)}\pi_2^*Q.
$$
By Lemma~\ref{RicPosLemma} the right-hand side cannot vanish; we conclude that 
$x_1^{(\infty)}, x_2^{(\infty)}>0$.
Finally, let 
\begin{align*}
g_\infty=x_1^{(\infty)}\pi_1^*Q+x_2^{(\infty)}\pi_2^*Q.
\end{align*}
We have shown that $\{g_i\}_{i\in\mathbb N}$ converges to $g_\infty$, and as explained in~\S\ref{prelim_subsec}, this convergence is smooth.
By passing to the limit in formula~\eqref{RIEq}, we see that $\Ric g_\infty=g_\infty$, concluding the proof.
\end{proof}

\begin{lemma}
\label{uniqHMaxLemma}
Suppose that $s=2$ in~(\ref{m_decomp}) and that~(\ref{InEquivAssumEq}) holds. Choose $T\in\mathcal M$. Up to scaling, there exists a unique metric $g\in\mathcal M$ whose Ricci curvature equals $cT$ for some $c>0$.
\end{lemma}

\begin{proof}
The existence of $g$ follows from Theorem~\ref{max_uniqThm}. To prove the uniqueness, fix $b>0$ and set
\begin{equation*}
\begin{aligned}
 P_b(x):=P(x,b). 
\end{aligned}
\end{equation*}
By Lemma \ref{PRCPEqLemma}, it suffices to show that $P_b(x)=0$ for at most one $x\in(0,\infty)$.

The
leading coefficient of $P_b$ (i.e., the coefficient of $x^4$) is positive
by Lemma \ref{gammaMaxLemma}, so the leading coefficient of the quadratic polynomial 
$R_2:=\frac{d^2}{dx^2}P_b(x)$ is positive. The 
  $x$ coefficient of $R_2$ is nonnegative. Thus, $R_2$ has no more than one positive root, equivalently, $R_1:=\frac d{dx}P_b(x)$ has at most one critical point, in $(0,\infty)$. Since $R_1 (0) < 0$ and $ \lim_{x\ra\infty}R_1 (x) =\infty$ it follows that
$R_1$ has at most one positive root. 
Consequently, the function $P_b$ has at most one critical point in $(0,\infty)$. 
Again, since $P_b(0) \le 0$ and $ \lim_{x\ra\infty}P_b (x) =\infty$, it follows that $P_b$ has no more than one positive root.
\end{proof}

\begin{remark} {\rm
 In regards to the previous lemma, we remark that in the context of the Ricci iteration we have a naturally imposed scaling normalization given by~\eqref{RIEq}.
Namely, assume that the hypotheses of Theorem~\ref{s2Thm}~(i) hold. Let $g$ be a metric such that $\Ric g=g_{i-1}$. Then,
$\Ric cg=g_{i-1}$ for any $c>0$. However, according to Lemma~\ref{uniqHMaxLemma}, there is at most one 
$c>0$ such that 
$\Ric g_{i+1}=cg$ is solvable for some $g_{i+1}$, and we set $g_i=cg$.}
\end{remark}

Let $T,g,h\in\mathcal M$ be given by~\eqref{T_def}, \eqref{gEq} and
\begin{align}\label{gh_def}
h:=
y_1\pi_1^*Q+y_2\pi_2^*Q.
\end{align}
Denote 
\begin{align}\label{eq_alpha_etc}
\alpha_T:=\frac{z_1}{z_2},
\q
\alpha_g:=\frac{x_1}{x_2},
\q
\alpha_h:=\frac{y_1}{y_2}.
\end{align}
The next result gives the monotonicity needed in the proof of Theorem~\ref{s2Thm}. Before stating it, we need to point out that, as Lemma~\ref{RicPosEinLemma} shows, there are at most three numbers $x>0$ such that $g^x\in\mathcal E$ (recall~\eqref{EEq} and \eqref{gxEq}).

\begin{lemma}\label{lem_max}
Suppose that $H$ is maximal in $G$, that $s = 2 $ in~(\ref{m_decomp}), and that~(\ref{InEquivAssumEq}) holds.
Let the metrics $T,g,h\in\mathcal M$ given by~(\ref{T_def}), (\ref{gEq}) and~(\ref{gh_def})
satisfy $\Ric g=cT$ for some $c>0$ and $\Ric h=g$. Suppose $\eps,\epsilon_1,\epsilon_2>0$ are such that $g^\eps,g^{\eps_1},g^{\eps_2}\in\mathcal E$ and $g^x\not\in \mathcal E$ whenever
$x\in(\eps_1,\eps_2)$. Then:
\begin{enumerate}[(i)]
\item
If $\alpha_T\ge\eps$, then
$\alpha_g\ge \eps$.
\item
If $\alpha_T\le\eps$, then
$\alpha_g\le \eps$.
\item
If $\alpha_T\ge\a_+$ (recall (\ref{alphapmEq})), then
$\alpha_T\ge\alpha_g\ge \a_+$.
\item
If $\eps_1\le\alpha_g\le\alpha_T\le \eps_2$, then
$\eps_1\le\alpha_h\le\alpha_g$.
\item
If 
$\eps_1\le\alpha_T\le\alpha_g\le\eps_2$, then
$\alpha_g\le\alpha_h\le\eps_2$.
\item
If $\alpha_T\le\a_-$ (recall~(\ref{alphapmEq})), then
$\alpha_T\le\alpha_g\le\a_-$.
\end{enumerate}
\end{lemma}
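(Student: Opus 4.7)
The plan is to encode the entire lemma in the dynamics of the one-variable function
\[
F(\alpha) := \frac{r_1(\alpha)}{r_2(\alpha)},
\]
where $r_1,r_2$ are the Ricci components computed in Lemma~\ref{PRCs2Lemma}. Since rescaling does not affect the ratio $\alpha := x_1/x_2$, the hypothesis $\Ric g = cT$ with $c>0$ is equivalent to $\alpha_T = F(\alpha_g)$, and similarly $\Ric h = g$ gives $\alpha_g = F(\alpha_h)$; by Lemma~\ref{RicPosEinLemma}, the Einstein parameters $\epsilon$, $\epsilon_1$, $\epsilon_2$, and $\alpha_\pm$ are precisely the fixed points of $F$.

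I would first show that $F$ is a strictly increasing bijection from an open interval $I = (\alpha_0,\alpha_1) \subset (0,\infty)$ onto $(0,\infty)$. The explicit formulas in Lemma~\ref{PRCs2Lemma} make $r_1$ strictly increasing and $r_2$ strictly decreasing in $\alpha$, and Lemma~\ref{gammaMaxLemma} (using maximality of $H$) supplies $\gamma_{22}^1>0$ and $\gamma_{11}^2>0$, which forces $0 < \alpha_0 < \alpha_1 < \infty$, $F(\alpha_0^+)=0$, and $F(\alpha_1^-)=+\infty$. Monotonicity of $F$ then makes (i) and (ii) immediate: $F(\alpha_g)=\alpha_T\geq\epsilon=F(\epsilon)$ forces $\alpha_g\geq\epsilon$, and symmetrically for (ii). Observe also that $\alpha_g,\alpha_h \in I$ automatically, since the existence of $g,h \in \mathcal{M}$ in the hypotheses combined with Lemma~\ref{RicPosLemma} guarantees that the relevant Ricci components are strictly positive.

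Items (iii) and (vi) then reduce to an intermediate value argument. Since $F$ has no fixed points above $\alpha_+$ and $\lim_{\alpha \to \alpha_1^-}(F(\alpha)-\alpha)=+\infty$, continuity yields $F(\alpha)>\alpha$ for every $\alpha\in(\alpha_+,\alpha_1)$. Applied with $\alpha = \alpha_g$, this gives $\alpha_T = F(\alpha_g) \geq \alpha_g$, while (i) already provides $\alpha_g \geq \alpha_+$, proving (iii). The case (vi) is the mirror image: $F(\alpha_0^+) - \alpha_0 < 0$, together with the absence of fixed points below $\alpha_-$, forces $F(\alpha) < \alpha$ throughout $(\alpha_0, \alpha_-)$.

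For the interior items (iv) and (v), the observation is that $F-\mathrm{id}$ has constant sign on each open interval between consecutive fixed points of $F$. In the setting of (iv), $\alpha_g\leq\alpha_T = F(\alpha_g)$ with $\alpha_g \in [\epsilon_1,\epsilon_2]$ forces $F \geq \mathrm{id}$ throughout $[\epsilon_1,\epsilon_2]$; applying (i)--(ii) to $\alpha_g = F(\alpha_h)$ then confines $\alpha_h$ to the same interval, and the sign of $F-\mathrm{id}$ there yields $\alpha_g \geq \alpha_h$, completing (iv). Item (v) is obtained by reversing all inequalities. The main subtlety is the bookkeeping needed to ensure $r_1,r_2>0$ at every step, so that $F$ is well-defined and monotone; this is why the hypotheses are phrased in terms of a given Ricci iteration $(T,g,h)$ rather than an arbitrary $\alpha$.
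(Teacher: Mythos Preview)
Your proposal is correct and is a clean repackaging of the paper's argument. The paper works with the polynomial $P(x,y)$ of Lemma~\ref{PRCPEqLemma} and the cubic $\tilde P(x):=-P(x,x)/x$: for~(i)--(ii) it shows $\partial_yP(\eps,y)<0$, locates a root of $P_{\alpha_T}$ on the correct side of~$\eps$, and then invokes the uniqueness Lemma~\ref{uniqHMaxLemma} to identify that root with~$\alpha_g$; for (iii)--(vi) it computes $\alpha_g-\alpha_T$ explicitly as a rational expression whose sign equals that of $\tilde P(\alpha_g)$, and uses that the positive roots of~$\tilde P$ are exactly the Einstein parameters. Your function $F=r_1/r_2$ encodes the same information, since $F(\alpha)=\beta$ is equivalent to $P(\alpha,\beta)=0$ together with $r_1(\alpha),r_2(\alpha)>0$, and $F(\alpha)-\alpha$ has the same sign as $\tilde P(\alpha)$ on~$I$. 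The gain in your approach is that the strict monotonicity of $F$ (immediate from $r_1'>0$, $r_2'<0$ once $\gamma_{22}^1,\gamma_{11}^2>0$) makes~(i)--(ii) one-line consequences and renders the separate appeal to Lemma~\ref{uniqHMaxLemma} unnecessary; the paper's polynomial formulation, on the other hand, makes the connection with Lemma~\ref{RicPosEinLemma} and the degree count of Einstein metrics more explicit. One point to state cleanly in a final write-up: the inequality $\alpha_0<\alpha_1$ (so that $I\neq\emptyset$) is not forced by $\gamma_{22}^1,\gamma_{11}^2>0$ alone, but follows because the hypotheses already place $\alpha_g$ (and the Einstein parameters) inside~$I$, as you note later in the paragraph.
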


\begin{proof}
(i) 
The function $P(\eps,y)$ is decreasing in~$y$. Indeed,
since 
$\Ric g^\eps = cg^\eps$ for some $c>0$, the second line in~\eqref{PRC_alg_max} implies
$$
\baeq
c&=
\frac{b_2}2-\frac{\gamma_{22}^2}{4d_2}-\frac{\gamma_{11}^2}{2d_2}+\frac{\gamma_{11}^2}{4d_2}
\frac1{\eps^2}-\frac{\gamma_{22}^1}{2d_2}\eps
\cr
&=
(\th_2
+d_1\gamma_{11}^2\frac1{\eps^2}
-2d_1\gamma_{22}^1\eps
)
/4d_1d_2
.
\eaeq
$$
Thus,
\begin{align*}
\frac\partial{\partial y}P(\eps,y)=2d_1\gamma_{22}^1\eps^3
-\theta_2\eps^2-d_1\gamma_{11}^2=-4d_1d_2\eps^2c<0,
\end{align*}
as claimed.
 Note that $P(\eps,\eps)=0$ by Lemma \ref{RicPosEinLemma}. If $\alpha_T\ge\eps$, then
\begin{align}
\lb{PepsTEq}
P(\eps,\alpha_T)\le P(\eps,\eps)=0.
\end{align}
As in the proof of Lemma~\ref{uniqHMaxLemma}, the leading coefficient of the polynomial $P_{\alpha_T}(x)=P(x,\alpha_T)$ 
is positive, so $\lim_{x\ra\infty}P_{\alpha_T}(x)=\infty$.  
From this and~\eqref{PepsTEq}, it follows that there exists $x\ge\eps$ such that 
$P_{\alpha_T}(x)=0$. However, 
Lemma \ref{PRCPEqLemma} implies that
$P(\a_g,\alpha_T)=0$, and Lemma \ref{uniqHMaxLemma} then implies that
 $x=\alpha_g$, so $\alpha_g\ge\eps $ as desired.

(ii) The argument of (i) gives that $P(\eps,\alpha_T)\ge 0$. Note that
$P(0,\alpha_T)=
-d_1\gamma_{11}^2\alpha_T<0$ by  Lemma \ref{gammaMaxLemma}. 
Thus, $P(x,\a_T)=0$ for some $x\in(0,\a_T]$, and the rest of the proof follows that of
(i).

(iii) Suppose $\alpha_T\ge\a_+$. Setting $\eps=\a_+$ in (i), we obtain
\begin{equation}
\begin{aligned}
\label{alphagaplusEq}
 \alpha_g\ge\a_+.
\end{aligned}
\end{equation}
It remains to show that  $\alpha_g\le\alpha_T$. 
Formul\ae\ \eqref{PRC_alg_max} yield
\begin{align*}
\alpha_g-\alpha_T&=\alpha_g-\frac{\theta_1\alpha_g^2+d_2\gamma_{22}^1\alpha_g^4-2d_2\gamma_{11}^2\alpha_g}{
\theta_2\alpha_g^2+d_1\gamma_{11}^2-2d_1\gamma_{22}^1\alpha_g^3} \notag \\
&=\frac{\alpha_g\big(-(2d_1+d_2)\gamma_{22}^1\alpha_g^3+\theta_2\alpha_g^2-\theta_1\alpha_g+(d_1+2d_2)\gamma_{11}^2\big)}{
\theta_2\alpha_g^2+d_1\gamma_{11}^2-2d_1\gamma_{22}^1\alpha_g^3}\,.
\end{align*}
Since $\Ric g=cT$, the denominator is a positive multiple of the second component of $T$, i.e., is itself positive. Thus, the sign of $\alpha_g-\alpha_T$ coincides with the sign of $\tilde P(\alpha_g)$, where
\begin{align*}
\tilde P(x):=-(2d_1+d_2)\gamma_{22}^1x^3+\theta_2x^2-\theta_1x+(d_1+2d_2)\gamma_{11}^2,\qquad x\in\mathbb R.
\end{align*}
Note that $\tilde P(x)$ is a polynomial whose leading coefficient is
negative. To prove that $\alpha_g\le\alpha_T$, it suffices to show that $\alpha_g$ is greater than or equal to the largest root of this polynomial.
Since
\begin{align*}
\tilde P(x)=-{P(x,x)}/x
\end{align*}
for all $x>0$, the positive roots of $\tilde P(x)$ are precisely all $\eps$ such that $g^\eps\in\calE$, and we are done by~\eqref{alphagaplusEq}.

(iv) Assume $\eps_1\le\alpha_g\le\alpha_T\le\eps_2$. 
By assumption,
\begin{equation}
\begin{aligned}
\label{pxxnotzeroEq}
P(x,x)\not=0,\qquad x\in(\eps_1,\eps_2).
\end{aligned}
\end{equation}
By (i) and (ii), $\eps_1\le\a_h\le\eps_2$. It suffices to show that $\a_h-\alpha_g\le0$ or, equivalently, that $\tilde P(\a_h)\le0$. 
Since $\tilde P(\eps_1)=\tilde P(\eps_2)=0$, assume that $\a_h\in(\eps_1,\eps_2)$.
We may also assume $\eps_1<\alpha_g<\alpha_T<\eps_2$
(otherwise, Lemma \ref{RicPosEinLemma} implies $g$ and $h$ are Einstein, which means $\a_g=\a_h$, and we are done). 
By~\eqref{pxxnotzeroEq}, $\tilde P(x)$ does not change sign
in the interval $(\eps_1,\eps_2)$. Therefore, it suffices 
to prove that $\tilde P(\alpha_g)<0$. But the sign of $\tilde P(\alpha_g)$ coincides with the sign of $\alpha_g-\alpha_T$. Since $\alpha_g<\alpha_T$ by assumption, we are done.

(v) The proof is essentially identical to that of~(iv).

(vi) The proof is essentially identical to that of~(iii).
\end{proof}

\subsection{Ricci iteration with non-maximal isotropy}\label{subsec_non-max}

Let us prove Theorem~\ref{s2Thm}~(ii). We assume $G$ has a connected Lie subgroup $K$ such that~\eqref{HKGEq} holds. The number $s$ in~\eqref{m_decomp} is still~2, and~\eqref{km1hEq} is satisfied. 
Since $\mathfrak k$ is a Lie algebra, $[\mathfrak k,\mathfrak k]\subset
\mathfrak k$, and as $\mathfrak k$
 is $Q$-orthogonal to $\mathfrak m_2$,
\begin{equation}
\begin{aligned}
\label{gamma112Eq}
\gamma_{11}^2=0.
\end{aligned}
\end{equation}
The requirement~\eqref{QHypEq} is equivalent to the formula
\begin{equation}
\begin{aligned}
\label{gamma211Eq}
\gamma_{22}^1>0. 
\end{aligned}
\end{equation}
If $\gamma_{22}^1=0$, then Lemma~\ref{PRCs2Lemma} implies that
all the metrics in $\mathcal M$ have the same Ricci curvature, namely,
\begin{equation}
\begin{aligned}
\label{SameRicEq}
\Big(
\frac{b_1}2-\frac{\gamma_{11}^1}{4d_1}
\Big)
\pi_1^* Q
+
\Big(
\frac{b_2}2-\frac{\gamma_{22}^2}{4d_2}
\Big)
\pi_2^* Q.
\end{aligned}
\end{equation}
In this case, the Ricci iteration exists if and only if the initial metric is equal to~\eqref{SameRicEq}, and hence is Einstein.

Because~\eqref{gamma112Eq} holds, the expression $P(x,x)$ is now a quadratic polynomial in $x$ times $x^2$. Hence Lemma~\ref{RicPosEinLemma} implies that there are at most two Einstein metrics in $\calE$, up to scaling; cf.~\cite[\S3]{WDMK08}.

\begin{remark} {\rm
\label{}
By analogy with Remark~\ref{rem_iden_max}, our proof Theorem~\ref{s2Thm}~(ii) identifies the limit, $g_\infty$, of the sequence $\{g_i\}_{i\in\mathbb N}$, provided this sequence exists. More precisely, suppose there are two distinct Einstein
metrics of volume 1 in~$\calM$. In the notation of~\eqref{alphapmEq} and~\eqref{eq_alpha_etc}, if 
$\a_{T}\in(0,\a_-)$, then $g_\infty$ is proportional to $g^{\a_-}$; if $\a_{T}\in(\a_-,\infty)$, then $g_\infty$ is proportional to $g^{\a_+}$.
} \end{remark}

\begin{proof}[Proof of Theorem \ref{s2Thm} (ii).]
(b) Suppose $\Ad_G(H)|_{\mathfrak m_1}$ is nontrivial, 
$\mathcal E\ne\emptyset$ and $\alpha_T\ge\a_-$.
Then by Remark \ref{zetaRemark} one has
\begin{equation}
\begin{aligned}
\label{zeta1Eq}
\zeta_1>0.
\end{aligned}
\end{equation}  
Lemma~\ref{lem_nonmax}~(i) below implies that a sequence 
$\{g_i\}_{i\in\mathbb N}$ satisfying~\eqref{RIEq} exists and is unique.
Let $\a_i$ be as in~\eqref{giEq} and~\eqref{alphaiEq}.
Lemma~\ref{lem_nonmax}~(i) also implies
that
the sequence $\{\alpha_i\}_{i\in\NN}$ is monotone and
\begin{align*}
\a_-\le \alpha_i\le\max\{\alpha_1,\alpha_+\},\qquad i\in\mathbb N.
\end{align*}
Therefore, this sequence must converge to some
number~$\alpha_\infty$. It is clear that $\alpha_\infty\ge\alpha_->0$. An alternative way to see that 
$\alpha_\infty>0$ is to observe that, by Proposition~\ref{prop_Artem}, 
\begin{equation}
\begin{aligned}
\label{aietaEq}
\alpha_i>
\frac{\zeta_1+\frac{\gamma_{11}^1}{4d_1}}{\zeta_2+\frac{\gamma_{22}^2}{4d_2}+\frac{\gamma_{22}^1}{d_2}}
=\frac{\tilde\eta_1}{\tilde\eta_2},
\q i\in\mathbb N,
\end{aligned}
\end{equation} 
where $\tilde\eta_1$ and $\tilde\eta_2$ denote the numerator and the denominator of the middle expression. The positivity of $\alpha_\infty$ then follows from~\eqref{gamma211Eq} and~\eqref{zeta1Eq}.

As in the proof of Theorem~\ref{s2Thm}~(i), the Ricci iteration equation~\eqref{RIEq}
implies that 
$$
x_1^{(\infty)}=\lim_{i\to\infty} x_1^{(i)}
\h{\ \ and \ \ }
x_2^{(\infty)}=\lim_{i\to\infty} x_2^{(i)}
$$ 
exist and are finite. Since $\alpha_\infty>0$, these limits
are simultaneously positive or zero. Plugging~\eqref{gamma112Eq} in Lemma~\ref{PRCs2Lemma} gives
\begin{align}\label{eq_Ricci_algHnonmax}
x_1^{(i)}&:=
\frac{b_1}2-\frac{\gamma_{11}^1}{4d_1}-\frac{\gamma_{22}^1}{2d_1}+\frac{\gamma_{22}^1}{4d_1}\alpha_{i+1}^2, \notag
\\
x_2^{(i)}&:=
\frac{b_2}2-\frac{\gamma_{22}^2}{4d_2}-\frac{\gamma_{22}^1}{2d_2}\alpha_{i+1}.
\end{align}
Combining~\eqref{eq_Ricci_algHnonmax}, \eqref{Casimir} and~\eqref{zeta1Eq} yields
\begin{align*}
x_1^{(\infty)}&=
\zeta_1+\frac{\gamma_{11}^1}{4d_1}+\frac{\gamma_{22}^1}{4d_1}\alpha_\infty^2
>0, 
\end{align*}
and so $x_2^{(\infty)}$ is positive.
As in the proof of Theorem~\ref{s2Thm}~(i), it follows that the Ricci iteration
converges smoothly to some $g_\infty\in\mathcal M$. Passing to the limit shows that $g_\infty\in\mathcal E$.

Assume now that $\a_T<\a_-$. Suppose the sequence $\{g_i\}_{i\in\NN}$ exists. 
Then it must be unique by Proposition~\ref{prop_Artem}.
The corresponding sequence $\{\alpha_i\}_{i\in\NN}$ is monotone non-increasing by Lemma~\ref{lem_nonmax}~(i) and converges to some $\alpha_\infty$.
Because~\eqref{aietaEq} holds, the reasoning above shows that
$\alpha_\infty\in[{\tilde\eta_1}/{\tilde\eta_2},\a_-)\subset (0,\a_-)$ and $\{g_i\}_{i\in\mathbb N}$ converges to an Einstein metric $g_\infty$ proportional to $g^{\alpha_\infty}$ (recall~\eqref{gxEq}). But
then $g^{\alpha_\infty}$ is Einstein, which is impossible since $\a_\infty<\a_-$.

(a) Suppose $\Ad_G(H)|_{\mathfrak m_1}$ is trivial. Lemma~\ref{lem_nonmax}~(ii) implies that a sequence $\{g_i\}_{i\in\NN}$ satisfying~\eqref{RIEq} exists and is unique.
As before, let $\a_i$ be as in~\eqref{giEq} and~\eqref{alphaiEq}.
Lemma~\ref{lem_nonmax}~(ii) also implies
that the sequence $\{\alpha_i\}_{i\in\NN}$ is monotone and
\begin{align*}
\min\{\alpha_1,\alpha_-\}=\min\{\alpha_1,\alpha_+\}\le\alpha_i\le\max\{\alpha_1,\alpha_+\}=\max\{\alpha_1,\alpha_-\},\qquad i\in\mathbb N.
\end{align*}
Therefore, $\{\alpha_i\}_{i\in\NN}$ must converge to some positive 
number~$\alpha_\infty$. As in the proof of~(b), we use this fact to show that $\{g_i\}_{i\in\mathbb N}$ converges to a $G$-invariant Einstein metric on $M$. The uniqueness of such metrics up to scaling follows from the equality $\alpha_-=\alpha_+$.

(c) Suppose $\mathcal E=\emptyset$ and a sequence $\{g_i\}_{i\in\mathbb N}$ satisfying~\eqref{RIEq} exists. Lemma~\ref{lem_nonmax}~(iii) shows that $\{\alpha_i\}_{i\in\NN}$ is monotone decreasing. Arguing as in the proof of the ``if" portion of~(b), we arrive at a contradiction.
\end{proof}

\begin{lemma}\label{lem_nonmax}
Suppose that $s=2$ in~(\ref{m_decomp}) and that~(\ref{InEquivAssumEq}) holds. Consider a metric $T\in\mathcal M$ given by~(\ref{T_def}). Let $G$ have a connected Lie subgroup $K$ satisfying~(\ref{HKGEq}). Assume~(\ref{km1hEq}) and~(\ref{QHypEq}) hold.
\begin{enumerate}[(i)]
\item
Suppose $\Ad_G(H)|_{\mathfrak m_1}$ is nontrivial and $\calE\ne\emptyset$. Then:
\begin{enumerate}
\item
If $\alpha_T\ge\a_+$, then
there exists a metric $g\in\mathcal M$, unique up to scaling, such that $\Ric g=cT$ for some $c>0$.
Moreover, $\alpha_T\ge\alpha_g\ge \a_+$.

\item
If $\alpha_T\in(\a_-,\a_+)$, then
there exists $g\in\mathcal M$, unique up to scaling, such that $\Ric g=cT$ for some $c>0$.
Moreover, $\alpha_T\le\alpha_g\le \a_+$.

\item
If $\alpha_T\le\a_-$ and $\Ric g=cT$ for some $g\in\mathcal M$ and $c>0$, 
then $\alpha_g\le\alpha_T$.
\end{enumerate}

\item
Suppose $\Ad_G(H)|_{\mathfrak m_1}$ is trivial. Then:
\begin{enumerate}
\item
The set $\calE$ is nonempty, and the equality $\alpha_-=\alpha_+$ holds true.

\item
There exists a metric $g\in\mathcal M$, unique up to scaling, such that $\Ric g=cT$ for some $c>0$.

\item
If $\alpha_T\ge\alpha_-=\alpha_+$, then $\alpha_T\ge\alpha_g\ge\alpha_-=\alpha_+$.

\item
If $\alpha_T\le\alpha_-=\alpha_+$,
then $\alpha_T\le\alpha_g\le\alpha_-=\alpha_+$.
\end{enumerate}

\item
If $\calE=\emptyset$ and $\Ric g=T$ for some $g\in\mathcal M$, then $\alpha_T\ge\alpha_g$.

\end{enumerate}
\end{lemma}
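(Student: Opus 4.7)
The plan is to exploit the algebraic structure forced by the subalgebra $\mathfrak{k} = \mathfrak{h} \oplus \mathfrak{m}_1$, which gives $\gamma_{11}^2 = 0$ (as in~\eqref{gamma112Eq}) and therefore reduces the quartic $P(x,y)$ of Lemma~\ref{PRCPEqLemma} to
$$
P(x,y) = x^2 Q_y(x), \qquad Q_y(x) := d_2 \gamma_{22}^1 x^2 + 2 d_1 \gamma_{22}^1 y x + (\theta_1 - y \theta_2).
$$
By Lemma~\ref{PRCPEqLemma} the equation $\Ric g = cT$ (with $c \ge 0$) is then equivalent to asking that $\alpha_g$ be a positive root of $Q_{\alpha_T}$, while by Lemma~\ref{RicPosEinLemma} the Einstein ratios are precisely the positive roots of
$$
R(x) := Q_x(x) = (d_2 + 2 d_1)\gamma_{22}^1 x^2 - \theta_2 x + \theta_1.
$$
The leading coefficient of $Q_{\alpha_T}$ is $d_2 \gamma_{22}^1 > 0$ by~\eqref{QHypEq}, and its sum of roots $-2 d_1 \alpha_T/d_2$ is negative, so $Q_{\alpha_T}$ has \emph{at most one} positive root, and exactly one precisely when the constant term $\theta_1 - \alpha_T \theta_2$ is negative; this reproduces the existence criterion of Proposition~\ref{prop_Artem}. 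In particular $Q_{\alpha_T}(x) < 0$ on $[0, \alpha_g)$ and $Q_{\alpha_T}(x) > 0$ on $(\alpha_g, \infty)$.

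The rest of the argument is driven by two algebraic identities. The first is the tautology $Q_{\alpha_T}(\alpha_T) = R(\alpha_T)$, from which the sign of $R(\alpha_T)$ decides whether $\alpha_T$ lies below, equals, or exceeds $\alpha_g$. The second follows from the direct computation
$$
Q_{\alpha_T}(x) - R(x) = (\alpha_T - x)\bigl(2 d_1 \gamma_{22}^1 x - \theta_2\bigr),
$$
combined with Vieta's formula $\theta_2 = (d_2 + 2 d_1)\gamma_{22}^1(\alpha_- + \alpha_+)$ whenever both Einstein roots exist, giving
$$
Q_{\alpha_T}(\alpha_\pm) = -(\alpha_T - \alpha_\pm)\bigl(d_2 \gamma_{22}^1 \alpha_\pm + (d_2 + 2d_1)\gamma_{22}^1 \alpha_\mp\bigr),
$$
whose second factor is strictly negative. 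Hence $\alpha_T \gtrless \alpha_\pm$ forces $\alpha_g \gtrless \alpha_\pm$ respectively.

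Each subitem of the lemma now follows by tracking signs. For (i)(a), $\alpha_T \ge \alpha_+$ gives $R(\alpha_T) \ge 0$ (so $\alpha_g \le \alpha_T$) and $Q_{\alpha_T}(\alpha_+) \le 0$ (so $\alpha_g \ge \alpha_+$). For (i)(b), $\alpha_T \in (\alpha_-,\alpha_+)$ gives $R(\alpha_T) < 0$ (so $\alpha_g > \alpha_T$) and $Q_{\alpha_T}(\alpha_+) > 0$ (so $\alpha_g < \alpha_+$). For (i)(c), $\alpha_T \le \alpha_-$ gives $R(\alpha_T) \ge 0$ and hence $\alpha_g \le \alpha_T$. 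For part (ii), the hypothesis that $\Ad_G(H)|_{\mathfrak{m}_1}$ is trivial combined with Remark~\ref{zetaRemark} forces $\dim \mathfrak{m}_1 = 1$ and $\zeta_1 = 0$; antisymmetry of the bracket on a line then yields $\gamma_{11}^1 = 0$, so $\theta_1 = 0$ and
$$
R(x) = x\bigl((d_2 + 2 d_1)\gamma_{22}^1 x - \theta_2\bigr),
$$
whose unique positive root equals $\alpha_- = \alpha_+ = \theta_2/((d_2+2d_1)\gamma_{22}^1) > 0$, proving (ii)(a); existence in (ii)(b) is automatic since $\theta_1/\theta_2 = 0 < \alpha_T$; and (ii)(c)--(d) reduce to (i)(a) and to the second-factor computation of (i)(b) applied at the single Einstein ratio. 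Finally for (iii), $\calE = \emptyset$ rules out any positive root of $R$; since $R$ has positive leading coefficient, positive sum of roots, and non-negative product of roots, its roots must be complex conjugate, so $R$ is strictly positive on $\mathbb{R}$, whence $R(\alpha_T) > 0$ and $\alpha_g < \alpha_T$.

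The main obstacle I anticipate is the bookkeeping surrounding the sign $2 d_1 \gamma_{22}^1 \alpha_\pm - \theta_2 < 0$ and the cleanest way to phrase the second identity so that the eight subcases can be disposed of uniformly; once these are established, everything else is routine analysis of a quadratic whose leading coefficient and the signs of whose coefficients are controlled by~\eqref{QHypEq} and Lemma~\ref{thetaLemma}.
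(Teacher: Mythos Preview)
Your argument is correct, and it takes a genuinely different route from the paper's own proof. The paper explicitly inverts the relation $Q_{\alpha_T}(\alpha_g)=0$ to write $\alpha_g=F(\alpha_T)=\frac{d_1}{d_2}\bigl(\sqrt{\alpha_T^2+2\eta_2\alpha_T-2\eta_1}-\alpha_T\bigr)$, then proves $F$ is increasing (via a derivative computation involving the square root) to obtain $\alpha_g\gtrless\alpha_\pm$, and introduces an auxiliary quadratic $\tilde F$ to compare $\alpha_g$ with $\alpha_T$. You instead stay on the implicit side: the single identity $Q_{\alpha_T}(x)-R(x)=(\alpha_T-x)(2d_1\gamma_{22}^1x-\theta_2)$, evaluated at $x=\alpha_T$ and at $x=\alpha_\pm$, together with the sign structure of the upward-opening quadratic $Q_{\alpha_T}$ on $[0,\infty)$, dispatches every subcase without ever solving for $\alpha_g$. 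This is shorter and avoids the square-root calculus entirely; the paper's approach, on the other hand, makes the monotone dependence $\alpha_T\mapsto\alpha_g$ explicit, which can be convenient for later dynamical arguments. Two small points of wording: in your displayed formula for $Q_{\alpha_T}(\alpha_\pm)$, the bracketed factor $d_2\gamma_{22}^1\alpha_\pm+(d_2+2d_1)\gamma_{22}^1\alpha_\mp$ is strictly \emph{positive} (the overall sign comes from the leading minus); and in part~(iii) the product of roots $\theta_1/((d_2+2d_1)\gamma_{22}^1)$ is in fact strictly positive because $\calE=\emptyset$ forces $\Ad_G(H)|_{\mathfrak m_1}$ nontrivial (by your~(ii)(a)), hence $\zeta_1>0$ and $\theta_1>0$ by Lemma~\ref{thetaLemma}. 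Neither affects the validity of the argument.
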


\begin{remark} {\rm
\label{}
Lemma~\ref{lem_nonmax} can be proven in essentially the same way as Lemma~\ref{lem_max}. We provide an alternative proof below, which some readers may find conceptually simpler.
} \end{remark}

\begin{proof}
It will be convenient to use the slightly different normalization
than  \eqref{aietaEq}:
\begin{align*}
\eta_i:=\frac{2d_2^2}{d_1\gamma_{22}^1}\tilde\eta_i,
\q i\in\{1,2\}.
\end{align*}
Observe that $\eta_2\not=0$. Condition~\eqref{2sum_cond} means 
$z_1/z_2>\eta_1/\eta_2$.

(i-a) The metric $g^{\a_+}$ is Einstein. This fact and
Proposition~\ref{prop_Artem} imply $\alpha_+>\frac{\eta_1}{\eta_2}$, 
so our assumption gives $\a_T> \frac{\eta_1}{\eta_2}$.
One more application of Proposition~\ref{prop_Artem} yields the existence and the uniqueness of $g\in\mathcal M$ such that $\Ric g=cT$.

Next, we show that $\alpha_+\le\alpha_g$. 
Lemmas~\ref{PRCs2Lemma} and~\ref{thetaLemma}, together with~\eqref{gamma112Eq}, give
$$
\a_T=\frac{\tilde\eta_1+\gamma^1_{22}\a_g^2/4d_1}{\tilde\eta_2-\gamma^1_{22}\a_g/2d_2}
=
\frac{\eta_1+\frac12(d_2\a_g/d_1)^2}{\eta_2-d_2\a_g/d_1},
$$
so letting $y=d_2\a_g/d_1$, we obtain
$$
y^2+2\a_Ty+2\eta_1-2\eta_2\a_T=0.
$$ 
Consequently,
$\alpha_g=F(\a_T)$ with
\begin{align*}
F(x)=\frac{d_1}{d_2}
\big(
\sqrt{x^2+2\eta_2x-2\eta_1}-x
\big),\quad x\in({\eta_1}/{\eta_2},\infty).
\end{align*}
Because $g^{\a_-}$ and $g^{\a_+}$ are Einstein, we have
$\alpha_-=F(\alpha_-)$ and $\alpha_+=F(\alpha_+)$. This implies
\begin{align*}
\alpha_g-\alpha_+=F(\a_T)-F(\alpha_+).
\end{align*}
Since $\a_T\ge\alpha_+$, it suffices to demonstrate that $F$ is an increasing function, to wit,
\begin{align*}
F'(x)=\frac{d_1}{d_2}
\bigg(
\frac{x+\eta_2-\sqrt{(x+\eta_2)^2-\eta_2^2-2\eta_1}}{\sqrt{x^2+2\eta_2x-2\eta_1}}
\bigg)
>0
,\quad x\in({\eta_1}/{\eta_2},\infty).
\end{align*}

Finally, let us prove that $\alpha_g\le\a_T$, i.e., $F(\a_T)\le \a_T$, or
\begin{align*}
\frac{d_1}{d_2}\sqrt{\a_T^2+2\eta_2\a_T-2\eta_1}
\le\bigg(1+\frac{d_1}{d_2}\bigg)\a_T.
\end{align*}
Squaring the nonnegative expressions on both sides shows that $\a_g\le\a_T$ if and only if $\tilde F(\a_T)\ge0$ with
\begin{align*}
\tilde F(x)=\bigg(1+2\frac{d_1}{d_2}\bigg)x^2-2\frac{d_1^2}{d_2^2}\eta_2x+2\frac{d_1^2}{d_2^2}\eta_1,\qquad x\in\mathbb R.
\end{align*}
When $x>\eta_1/\eta_2$, the formula $\tilde F(x)=0$ implies 
$F(x)=x$. For such $x$, the equality $F(x)=x$ holds if and only if $x=\alpha_-$ or $x=\a_+$; cf.~Lemma~\ref{RicPosEinLemma}. To prove that $\tilde F(\a_T)\ge0$, it suffices to show that $\a_T$ is greater than or equal to the largest root of the polynomial $\tilde F(x)$, i.e., $\a_T\ge\alpha_+$. But this inequality holds by assumption.

(i-b)--(i-c) It suffices to repeat the arguments from the proof of~(i-a) with very minor changes.

(ii-a) Suppose $\Ad_G(H)|_{\mathfrak m_1}$ is trivial. Then $\zeta_1=0$. Because $\mathfrak m_1$ is irreducible, it must be 1-dimensional. Ergo, the constants $\gamma_{11}^1$ and $\eta_1$ equal~0. Lemma~\ref{thetaLemma} shows that $\theta_1$ equals~0 as well. The expression $P(x,x)$ (recall~\eqref{Pxx_def}) factors into $x^3$ and a linear function of $x$ with negative free term. Therefore, it vanishes for exactly one positive $x$. Since $P(\a_-,\a_-)=P(\a_+,\a_+)=0$ by Lemma~\ref{RicPosEinLemma}, we conclude that $\alpha_-=\alpha_+$.

(ii-b) This claim is a direct consequence of Proposition~\ref{prop_Artem} and the equality $\eta_1=0$.

(ii-c)--(ii-d)--(iii) It suffices to repeat the arguments from the proof of~(i-a) with minor modifications.
\end{proof}

\subsection{Ancient Ricci iterations}
\label{ancientSubSec}

In this subsection, we prove Theorem~\ref{AncientThm}. First, we observe that a metric has positive Ricci curvature as soon as it is ``sandwiched" between two metrics with positive Ricci curvature.

\begin{lemma}
\label{RicSandwichLemma}
Suppose that $s=2$ in~(\ref{m_decomp}) and that~(\ref{InEquivAssumEq}) holds.
Let $g_i\in\mathcal M$ and $\a_i>0$ be given by~(\ref{giEq}) 
and~(\ref{alphaiEq}) for $i=1,2,3$. Assume that 
$\a_1\le\a_2\le\a_3$. If $\Ric g_1$ and $\Ric g_3$ are positive-definite, then so is $\Ric g_2$.
\end{lemma}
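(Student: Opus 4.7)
The plan is to exploit the explicit formula for the components of $\Ric g$ provided by Lemma~\ref{PRCs2Lemma} and observe that, viewed as functions of the single variable $\alpha=x_1/x_2$, these components are monotone in opposite directions. The Ricci curvature of a metric $g\in\mathcal M$ written as in~\eqref{gEq} takes the diagonal form $\Ric g=r_1\pi_1^*Q+r_2\pi_2^*Q$, and since $Q|_{\mathfrak m_1}$ and $Q|_{\mathfrak m_2}$ are already positive-definite, positivity of $\Ric g$ is equivalent to $r_1>0$ and $r_2>0$. Crucially, the expressions for $r_1$ and $r_2$ in~\eqref{PRC_alg_max} depend only on the ratio $\alpha=x_1/x_2$, not on the overall scaling, so one may write $r_1=r_1(\alpha)$ and $r_2=r_2(\alpha)$.

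First I would compute the derivatives of $r_1$ and $r_2$ with respect to $\alpha$. Since $\gamma_{22}^1, \gamma_{11}^2\ge0$ (they are sums of squares, by~\eqref{gamma_def}), one obtains
\begin{align*}
\frac{d r_1}{d\alpha}=\frac{\gamma_{22}^1}{2d_1}\alpha+\frac{\gamma_{11}^2}{2d_1\alpha^2}\ge 0,
\qquad
\frac{d r_2}{d\alpha}=-\frac{\gamma_{11}^2}{2d_2\alpha^3}-\frac{\gamma_{22}^1}{2d_2}\le 0,
\end{align*}
on $(0,\infty)$. Hence $r_1$ is non-decreasing and $r_2$ is non-increasing as functions of $\alpha>0$.

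Now the conclusion is immediate. By assumption, $\Ric g_1$ and $\Ric g_3$ are positive-definite, so $r_1(\alpha_1),r_1(\alpha_3)>0$ and $r_2(\alpha_1),r_2(\alpha_3)>0$. Since $\alpha_1\le\alpha_2\le\alpha_3$, monotonicity yields
\begin{align*}
r_1(\alpha_2)\ge r_1(\alpha_1)>0, \qquad r_2(\alpha_2)\ge r_2(\alpha_3)>0,
\end{align*}
and therefore $\Ric g_2=r_1(\alpha_2)\pi_1^*Q+r_2(\alpha_2)\pi_2^*Q$ is positive-definite. There is no real obstacle here: the entire argument rests on the sign analysis of the coefficients in~\eqref{PRC_alg_max}, which is read off directly from the formula. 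The only point that deserves a moment of attention is checking that $r_1$ and $r_2$ depend solely on $\alpha$ and not on an overall normalization of the metric, which is transparent from the expressions in Lemma~\ref{PRCs2Lemma}.
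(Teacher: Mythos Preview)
Your argument is correct and follows essentially the same approach as the paper: both use Lemma~\ref{PRCs2Lemma} to view $r_1$ and $r_2$ as functions of $\alpha$ alone, observe that $r_1$ is non-decreasing and $r_2$ is non-increasing (the paper does this by grouping the coefficients into nonnegative constants $B_j,C_j$ rather than differentiating, but it amounts to the same thing), and then sandwich $r_1(\alpha_2)\ge r_1(\alpha_1)>0$ and $r_2(\alpha_2)\ge r_2(\alpha_3)>0$.
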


\begin{proof}
According to Lemma \ref{PRCs2Lemma},
$$\Ric g_i = r^{(i)}_1\pi_1^*Q+r^{(i)}_2\pi_2^*Q$$ with
\begin{align}
\label{PRC_alg_max3}
r^{(i)}_1 &=A_1+B_1\alpha_i^2-\frac {C_1}{\alpha_i}, \notag
\\
r^{(i)}_2 &=A_2+\frac{B_2}{\alpha_i^2}-{C_2}\alpha_i,
\end{align}
where  $A_1,A_2,B_1,B_2,C_1,C_2$ are constants independent of $\a_i$, and where $B_1,B_2,C_1,C_2\ge0$ in view of~\eqref{gamma_def}. By assumption, we have $r^{(1)}_1>0$. 
Since $\a_2\ge\a_1$, 
it follows that $r^{(2)}_1\ge r^{(1)}_1>0$.
Similarly, by assumption, $r^{(3)}_2>0$.
Since $\a_2\le\a_3$, 
it follows that $r^{(2)}_2\ge r^{(3)}_2>0$. Thus,
$\Ric g_2$ is positive-definite.
\end{proof}

In the non-maximal setting we will  use also the following result to guarantee positivity
of the Ricci tensor.

\begin{lemma}
\label{TrivCaseLemma}
Assume that $s = 2$ in~(\ref{m_decomp}) and that~(\ref{InEquivAssumEq}) holds. Suppose $G$ has a connected Lie subgroup $K$ satisfying~(\ref{HKGEq}). Let~(\ref{km1hEq}) and~(\ref{QHypEq}) hold.
If $T\in\mathcal M$ satisfies~(\ref{T_def}) with $z_1/z_2\le \a_-$, then $\Ric T\in\calM$.
\end{lemma}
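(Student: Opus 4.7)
The plan is to compute $\Ric T$ explicitly via Lemma~\ref{PRCs2Lemma} and show both diagonal entries are positive, using the constraint $\alpha:=z_1/z_2\le\alpha_-$ in a crucial way for the $\mathfrak m_2$-component.

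First, since we are in the non-maximal setting with \eqref{km1hEq}, we have $\mathfrak k=\mathfrak h\oplus\mathfrak m_1$ closed under brackets, so $\gamma_{11}^2=0$ (this is~\eqref{gamma112Eq}). Substituting into Lemma~\ref{PRCs2Lemma} and simplifying $b_1,b_2$ with the Casimir identity~\eqref{Casimir} (noting that by full symmetry $\gamma_{12}^1=\gamma_{11}^2=0$ and $\gamma_{12}^2=\gamma_{22}^1$), a short calculation yields
\begin{align*}
r_1=\tilde\eta_1+\frac{\gamma_{22}^1}{4d_1}\alpha^2,\qquad
r_2=\tilde\eta_2-\frac{\gamma_{22}^1}{2d_2}\alpha,
\end{align*}
where $\tilde\eta_1:=\zeta_1+\frac{\gamma_{11}^1}{4d_1}$ and $\tilde\eta_2:=\zeta_2+\frac{\gamma_{22}^2}{4d_2}+\frac{\gamma_{22}^1}{d_2}$ are the quantities appearing in Proposition~\ref{prop_Artem} and in~\eqref{aietaEq}.

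Next, positivity of $r_1$ is immediate: $\tilde\eta_1\ge 0$ and $\gamma_{22}^1>0$ by~\eqref{gamma211Eq} (equivalent to \eqref{QHypEq}), so $r_1>0$ for every $\alpha>0$, hence for $\alpha=z_1/z_2$. The nontrivial step is the $r_2$-component. Here I would exploit that $r_2$ is \emph{linear and strictly decreasing} in $\alpha$ (again thanks to $\gamma_{22}^1>0$). Since $\alpha_-$ is in the range where $\mathcal E\ne\emptyset$ is assumed implicitly by the hypothesis $z_1/z_2\le\alpha_-$, the metric $g^{\alpha_-}$ (recall~\eqref{gxEq}) is Einstein. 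Evaluating the formula above at $\alpha_-$ and matching with $\Ric g^{\alpha_-}=c_-g^{\alpha_-}$ gives $r_2(\alpha_-)=c_-$, and Lemma~\ref{RicPosLemma} forces $c_->0$ (the alternative $c_-=0$ would make $G$ abelian, which is incompatible with $\gamma_{22}^1>0$).

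Combining, for any $\alpha\le\alpha_-$ we have $r_2(\alpha)\ge r_2(\alpha_-)=c_->0$, which together with $r_1>0$ shows $\Ric T=r_1\pi_1^*Q+r_2\pi_2^*Q$ is positive-definite, i.e., $\Ric T\in\mathcal M$. The only mildly delicate point is the need to know $c_->0$; this is where Lemma~\ref{RicPosLemma} (via the hypothesis \eqref{QHypEq}) does all the work. Everything else is book-keeping with the explicit formulas.
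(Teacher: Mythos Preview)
Your proof is correct and follows essentially the same approach as the paper's: both compute $r_1,r_2$ via Lemma~\ref{PRCs2Lemma} with $\gamma_{11}^2=0$, note that $r_1>0$ since the constant term is nonnegative and $\gamma_{22}^1>0$, and obtain $r_2>0$ from the monotonicity in $\alpha$ together with positivity of $r_2(\alpha_-)$. Your version is in fact slightly more explicit than the paper's about why the Einstein constant $c_-$ is strictly positive (ruling out $c_-=0$ via the abelian contradiction with $\gamma_{22}^1>0$), whereas the paper simply asserts ``$\Ric g^{\alpha_-}=g^{\alpha_-}$ is positive-definite.''
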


\begin{proof}
When $z_1/z_2=\a_-$, the result is obvious. Suppose that $z_1/z_2< \a_-$.
Since $\gamma_{11}^2=0$ by \eqref{gamma112Eq},
Lemma~\ref{PRCs2Lemma} gives that
$
\Ric T = r_1\pi_1^*Q+r_2\pi_2^*Q 
$
with
\begin{align*}
r_1 &=A_1+B_1(z_1/z_2)^2,
\\
r_2 &=A_2-{C_2}z_1/z_2,
\end{align*}
where  $A_1,A_2,B_1,C_2$ are constants independent of $T$. Moreover, $B_1,C_2>0$ by~\eqref{gamma211Eq}, and 
\beq\label{A1Eq}
A_1= \th_1/4d_1d_2
\eeq
(recall~\eqref{th_def}).
By Lemma \ref{thetaLemma}, $\th_1=4d_1d_2\zeta_1+d_2\gamma_{11}^1+4d_2\gamma_{11}^2\ge0$,
so $A_1\ge0$.
Thus, $r_1\ge B_1(z_1/z_2)^2>0$, while $r_2>A_2-{C_2}\a_->0$
since $\Ric g^{\a_-}= g^{\a_-}$ is positive-definite. Thus, $\Ric T\in\calM$.
\end{proof}

As in the formulation of Theorem~\ref{AncientThm}, we always assume $T\in\calM$ is given by~\eqref{T_def} in the proof below.

\begin{proof}[Proof of Theorem~\ref{AncientThm}]
(i)
Suppose that the assumptions
of Theorem~\ref{AncientThm}~(i) hold. 
Let $T$ satisfy $\a_-< z_1/z_2<\a_+$. 
Applying Lemma~\ref{RicSandwichLemma}
to the triple $g^{\a_-},T,g^{\a_+}$ shows that 
$\Ric T\in\mathcal M$ since $\Ric g^{\a_-}=
g^{\a_-}$ and $\Ric g^{\a_+}=
g^{\a_+}$ are positive-definite. 
Thus,
$$
\{T\in\calM\,:\, z_1/z_2\in[\a_-,\a_+]\}\subset
\calM^{(2)}.
$$
Next, let $T$ satisfy $\a_-<z_1/z_2<\a_+$,
and set $g_0:=\Ric T\in\mathcal M$. According to Lemma~\ref{lem_max}~(ii),
if we had $\a_0\le\a_-$ (recall~\eqref{alphaiEq}), we would also have $\a_T\le\a_-$.
It follows that $\a_0>\a_-$.
Similarly, according to
Lemma~\ref{lem_max}~(i),
if we had $\a_0\ge\a_+$, then we would have $\a_g\ge\a_+$.
It follows that $\a_0<\a_+$. 
Thus, we may apply Lemma~\ref{RicSandwichLemma}
to the triple $g^{\a_-},g_0,g^{\a_+}$ to conclude that $g_{-1}:=\Ric g_0\in\calM$, i.e.,
$$
\{T\in\calM\,:\, z_1/z_2\in[\a_-,\a_+]\}\subset
\calM^{(3)}.
$$
By induction, it follows that
$$
\{T\in\calM\,:\, z_1/z_2\in[\a_-,\a_+]\}\subset
\calM^{(\infty)}.
$$
Moreover, when $g_1\in\calM$ is such that $\alpha_1\in(\a_-,\a_+)$, the ancient Ricci iteration~\eqref{RRIEq} 
exists and
$\a_{-i}\in
(\a_-,\a_+)$ for all $i\in\NN\cup\{0\}$.
Lemma \ref{lem_max} (iv)--(v) implies that the sequence $\{\a_{-i}\}_{i=-1}^\infty$ is monotone. The arguments in the proof of Theorem \ref{s2Thm} (i) now apply verbatim to show that the limit of $\{g_{-i}\}_{i=-1}^\infty$ exists in the smooth topology and is an Einstein metric.

Finally, suppose that $T\in \calM^{(\infty)}$ and
$z_1/z_2>\a_+$. Then \eqref{RRIEq} with $g_1=T$ is well-defined. The arguments of the previous paragraph show that 
$\lim_{i\ra\infty}\a_{-i}=\infty$, or else $\a_{-i}$ must converge to some $\a_{-\infty}>\a_+$ such that $g^{\a_{-\infty}}$ an Einstein metric,
a contradiction. However, there exists $C>0$ depending only on
$d_1,d_2,\{\gamma^{l}_{jk}\}_{j,k,l=1}^2,b_1,b_2$ such that
$r^{(i)}_2<0$ if $\a_{i}>C$, by Lemma  \ref{PRCs2Lemma} (observing that  
$\gamma_{22}^1>0$ by Lemma  \ref{gammaMaxLemma}).
Thus, $r(T)<\infty$.
Similar reasoning shows that 
$z_1/z_2<\a_-$ implies $r(T)<\infty$
(otherwise $r^{(i)}_1<0$ for some $i$). Thus, we obtain~\eqref{Minfmax}.

(ii) Suppose that the hypoteses of Theorem~\ref{AncientThm}~(ii) hold. First, assume $\Ad_G(H)|_{\mathfrak m_1}$ is nontrivial
and $\mathcal E\ne\emptyset$.
Suppose $r(g_1)\ge 2$, i.e.,
$g_0:=\Ric g_1\in\calM$. 
As above, Lemma \ref{lem_max} (i)--(ii) implies 
that $\a_0\in(\a_-,\a_+)$ when $\a_1\in(\a_-,\a_+)$. Similarly, $\a_0<\a_-$ when $\a_1<\a_-$. 

If $\a_1<\a_-$, then 
Lemma~\ref{lem_nonmax}~(i-c) yields $\a_1<\a_0$.
Applying Lemma~\ref{RicSandwichLemma}
to the triple $g_1,g_0,g^{\a_-}$ shows that 
$g_{-1}:=\Ric g_0\in\mathcal M$ since, by assumption, $\Ric g^{\a_-}=
g^{\a_-}$ and $\Ric g_1$ are positive-definite.
Thus,
$$
\{T\in\calM^{(2)}\,:\, z_1/z_2<\a_-\}\subset
\calM^{(3)}.
$$
By induction, it follows that 
$$
\{T\in\calM^{(2)}\,:\, z_1/z_2<\a_-\}\subset
\calM^{(\infty)}.
$$
In addition, the ancient Ricci iteration \eqref{RRIEq} 
exists, and
$\a_{-i}\in (0,\a_-)$ for all $i\in\NN\cup\{0\}$. We also have
$\a_{-i}<\a_{-i-1}$.

If  $\a_1\in(\a_-,\a_+)$ holds, then applying Lemma~\ref{RicSandwichLemma}
to the triple $g^{\a_-},g_1,g^{\a_+}$ shows that 
$g_{0}:=\Ric g_1\in\mathcal M$ since $\Ric g^{\a_-}=
g^{\a_-}$ and $\Ric g^{\a_-}=
g^{\a_-}$ are positive-definite, so
$$
\{T\in\calM\,:\, \a_-\le z_1/z_2\le \a_+\}\subset
\calM^{(2)}.
$$
(Thus, there is no need to
make the assumption $\Ric g_1\in\mathcal M$.)
Lemma~\ref{lem_nonmax}~(i-b) 
implies that $\a_1>\a_0$. As noted earlier, $\a_0\in(\a_-,\a_+)$. Therefore, applying Lemma~\ref{RicSandwichLemma}
to the triple $g^{\a_-},g_0,g^{\a_+}$ shows that 
$g_{-1}:=\Ric g_0\in\mathcal M$, i.e.,
$$
\{T\in\calM\,:\, \a_-\le z_1/z_2\le \a_+\}\subset
\calM^{(3)}.
$$ 
By induction, it follows that 
$$
\{T\in\calM\,:\, \a_-\le z_1/z_2\le \a_+\}\subset
\calM^{(\infty)}.
$$ 
Also, the ancient Ricci iteration~\eqref{RRIEq} 
exists and
$\a_{-i}\in (\a_-,\a_+)$ for all $i\in\NN\cup\{0\}$. Also,
$\a_{-i}>\a_{-i-1}$. 

The arguments in the proof of Theorem~\ref{s2Thm}~(ii) now apply verbatim to show that,
in both cases we just considered, i.e., when either
$g_1\in
\{T\in\calM^{(2)}\,:\, z_1/z_2<\a_-\}$ or
$g_1\in \{T\in\calM\,:\, \a_-\le z_1/z_2\le \a_+\}$, the limit of $\{g_{-i}\}_{i=-1}^\infty$ exists in the smooth topology and is an Einstein metric. Moreover, similar arguments to those employed in part~(i) above show that actually
\begin{equation}
\begin{aligned}
\label{MinfproofEq}
\{T\in\calM^{(2)}\,:\, z_1/z_2<\a_-\}\cup \{T\in\calM\,:\, \a_-\le z_1/z_2\le \a_+\}
=
\calM^{(\infty)}.
\end{aligned}
\end{equation}
Indeed, $\gamma_{22}^1>0$ by \eqref{gamma211Eq},
and so the argument employed in part~(i) goes through in the same way to show that any $T$ with 
$z_1/z_2> \a_+$ has finite Ricci index, while 
if $z_1/z_2<\a_-$ and $T\not\in\calM^{(2)}$, then,
of course, $r(T)=1<\infty$. 
Finally, Lemma \ref{TrivCaseLemma} allows us to replace
$\calM^{(2)}$ in~\eqref{MinfproofEq} by $\calM$ since it shows that $T\in \calM^{(2)}$ when $z_1/z_2<\a_-$. This proves~\eqref{Minfnonmax}.

Suppose that $\Ad_G(H)|_{\mathfrak m_1}$ is non-trivial
and $\calE$ is empty. Assume $r(g_1)\ge2$. Then, Lemma~\ref{lem_nonmax}~(iii)
shows that $\a_0>\a_1$. Arguing as in the previous paragraphs, we show that
$r(g_1)<\infty$ (since $\gamma_{22}^1>0$ by \eqref{gamma211Eq}).
Thus, $\calM^{(\infty)}=\emptyset$.

Next, 
if $\Ad_G(H)|_{\mathfrak m_1}$ is trivial,
then $\a_-=\a_+>0$ (recall Lemma \ref{lem_nonmax} (ii-a)). 
Once again, since $\gamma_{22}^1>0$ by \eqref{gamma211Eq}, we see that any $T$ with 
$z_1/z_2> \a_+$ has finite Ricci index. 
Suppose now $T$ is such that
$z_1/z_2< \a_+$. Lemma~\ref{TrivCaseLemma} implies that $r(T)\ge 2$. Denote $g_1:=T$ and $g_0:=\Ric T$. Lemma~\ref{lem_nonmax}~(ii-d) shows that $\a_0<\a_1$, and applying
Lemma \ref{TrivCaseLemma} again, we obtain $r(T)\ge 3$.
By induction,~\eqref{Minfminus} must hold.
Also, the ancient Ricci iteration~\eqref{RRIEq} 
exists, and
$\a_{-i}\in (0,\a_-)$ for all $i\in\NN\cup\{0\}$. We have $\a_{-i}>\a_{-i-1}$ and, therefore, $\lim_{i\ra\infty}\a_{-i}=0$. To understand the limit of the sequence $\{g_{-i}\}_{i=-1}^\infty$, we analyze more closely formul\ae~\eqref{PRC_alg_max3}. Because $\Ad_G(H)|_{\mathfrak m_1}$ is trivial, 
$\zeta_1=0=\gamma_{11}^1$. 
By~\eqref{gamma112Eq}, $\gamma_{11}^2=0$.
Lemma \ref{thetaLemma}, together with~\eqref{A1Eq} and \eqref{gamma211Eq} thus imply
\begin{align*}
A_1&=B_2=C_1=0, \\
4d_1d_2A_2&=\th_2=4d_1d_2\zeta_2+d_1\gamma_{22}^2+4d_1\gamma_{22}^1\ge 
4d_1\gamma_{22}^1>0.
\end{align*}
Consequently, $\{g_{-i}\}_{i=-1}^\infty$ converges smoothly to the degenerate tensor $A_2\pi_2^*Q$.
This collapsed limit is the pull-back of a metric $g_E$ on $G/K$ under the inclusion
map $G/K\hookrightarrow G/H$. Since 
$G/K$ is isotropy irreducible (i.e., $s=1$ for it), all 
$G$-invariant metrics on it, and hence also $g_E$,  are Einstein.
Finally,  $(G/H,g_{-i})$ must converge in the Gromov--Hausdorff topology to $(G/K,g_E)$
\cite[Proposition 2.6] {MB14}. This concludes the proof of Theorem~\ref{AncientThm}.
\end{proof}

\subsection{Examples}
\label{examSubSec}

In this subsection, we assume that the group $G$ is simple and that the inner product $Q$ coincides with $-B$, the negative of the Killing form.
Let~\eqref{HKGEq} and~\eqref{km1hEq} hold for some connected Lie subgroup $K<G$ with Lie algebra $\mathfrak k$. We first consider a situation where $\Ad_G(H)|_{\mathfrak m_1}$ is trivial.

Recall that $d_i$ denotes the dimension of the $i$-th summand in the decomposition~\eqref{m_decomp}. The numbers $\gamma_{ij}^k$ are the structure constants of $M$ defined by~\eqref{gamma_def}. The constant $\zeta_i$ is the eigenvalue of the Casimir operator on $\mathfrak m_i$ (see~\eqref{zeta_def}). It is~0 if and only if $\Ad_G(H)|_{\mathfrak m_i}$ is trivial. As Lemma~\ref{RicPosEinLemma} shows, the Einstein metrics in $\mathcal M$ are characterised by the solutions of the equation $P(x,x)=0$, where the polynomial $P(x,y)$ is given by~\eqref{Pxx_def}.

\begin{example}[collapsing] {\rm
\label{example_triv}
Suppose
\begin{align*} G=SO(2m),\qquad K=U(m),\qquad H=SU(m),\qquad m\in\mathbb N\cap[3,\infty).
\end{align*} 
We identify $K$ and $H$ with subgroups of $G$ in the natural way. The reader will find a description of the isotropy representation of $G/H$ in~\cite[Example~I.24]{WDMK08}. In particular, $\Ad_G(H)|_{\mathfrak m_1}$ is trivial, and we have
$$\zeta_1=0,\q d_1=1,\q d_2=m^2-m.$$
Because $\mathfrak m_1$ is 1-dimensional, the constant $\gamma_{11}^1$ equals~0. Formul\ae\ \eqref{Casimir} and~\eqref{gamma112Eq} imply
\begin{align*}
\gamma_{22}^1=d_1=1.
\end{align*}
Since $G/K$ is symmetric, the inclusion $[\mathfrak m_2,\mathfrak m_2]\subset\mathfrak k$ holds, and $\gamma_{22}^2$ vanishes.
The expression $P(x,x)$ (recall~\eqref{Pxx_def} and~\eqref{gamma112Eq}) is now given by the formula
\begin{align*}
P(x,x)&=d_2\gamma_{22}^1x^4+2d_1\gamma_{22}^1x^4+(2d_1d_2-2d_2\gamma_{22}^1-2d_1d_2x)x^2 \\ &=(m^2-m+2)x^4-2(m^2-m)x^3.
\end{align*}
Lemma~\ref{RicPosEinLemma} implies (recall~\eqref{alphapmEq})
$$\a_-=\a_+=\frac{2(m^2-m)}{m^2-m+2}\,.$$ 
Theorem~\ref{s2Thm}~(ii-a) describes the Ricci iteration on~$M$. It shows that, given $T\in\mathcal M$, there exists a unique sequence $\{g_i\}_{i\in\mathbb N}$ satisfying~\eqref{RIEq} for all $i\in\mathbb N$ and $g_1=cT$ for some $c>0$. This sequence converges to an Einstein metric proportional to $g^{\a_-}=g^{\a_+}$ (recall~\eqref{gxEq}). 
Theorem~\ref{AncientThm}~(ii-a) shows that 
$$\mathcal M^{(\infty)}=\Big\{T\in\mathcal M\,:\, z_1/z_2\le\frac{2(m^2-m)}{m^2-m+2}\Big\},$$
where $z_1,z_2$ are from~\eqref{T_def}. 
By formula~\eqref{Casimir},
$$\zeta_2=\frac1{2d_2}(-2\gamma_{22}^1+d_2)=\frac{m^2-m-2}{2(m^2-m)}.$$
When $g_1\in\mathcal M^{(\infty)}$, the ancient Ricci iteration $\{g_{-i}\}_{i=-1}^\infty$ given by~\eqref{RRIEq}
converges smoothly to the degenerate metric
$$\bigg(\zeta_2+\frac{\gamma_{22}^1}{d_2}\bigg)\pi_2^*Q=- \frac12\pi_2^*B.$$
Then,
$
SO(2m)/SU(m)
$ 
collapses to
$SO(2m)/U(m)$. Note that $\dim SO(6)/SU(3)=7$.
} \end{example}

For the next example,
we introduce some additional notation largely following~\cite[\S3]{MWWZ86}. The space $K/H$ may not be effective, and we denote by $K'$ the quotient of $K$ acting effectively on $K/H$. Assume $K'$ is semisimple and $B_{\mathfrak k'}=\alpha B|_{\mathfrak k'}$ for some $\alpha>0$, where $\mathfrak k'$ is the Lie algebra of $K'$ and $B_{\mathfrak k'}$ is the Killing form of $\mathfrak k'$. Consider two Casimir operators
$$
C_{\mathfrak m_1,-B_{\mathfrak k'}|_{\mathfrak h}}:=-
\sum_{j=1}^{q_H}\ad u_j\circ\ad u_j,
\qquad C_{\mathfrak m_2,-B|_{\mathfrak k}}:=-
\sum_{j=1}^{q_K}\ad v_j\circ\ad v_j,
$$
where $\{u_j\}_{j=1}^{q_H}$ is an orthonormal basis of $\mathfrak h$ with respect to $-B_{\mathfrak k'}|_{\mathfrak h}$, and $\{v_j\}_{j=1}^{q_K}$ is an orthonormal basis of $\mathfrak k$ with respect to $-B|_{\mathfrak k}$. These operators have domains $\mathfrak m_1$ and $\mathfrak m_2$, respectively. Let $\zeta_1^*$ and $\zeta_2^*$ be the numbers such that
\begin{align*}
C_{\mathfrak m_1,-B_{\mathfrak k'}|_{\mathfrak h}}
=\zeta_1^*\, \h{id},\qquad C_{\mathfrak m_2,-B|_{\mathfrak k}}
=\zeta_2^*\, \h{id}.
\end{align*}
As shown in~\cite[page~188]{MWWZ86},
\begin{align}\label{newconstantsEq}
\zeta_1&=\alpha\zeta_1^*, \notag\\
\gamma_{22}^1&=d_1(1-\alpha), \notag\\
\gamma_{11}^1&=d_1-\gamma_{22}^1-2d_1\zeta_1=d_1\alpha(1-2\zeta_1^*), \notag\\
\gamma_{22}^2&=d_2-2d_2\zeta_2^*.
\end{align}

\begin{example}[non-collapsing] {\rm
\label{example_nontriv}
Assume
\begin{align*} G&=SO(2m-1),\qquad K=SO(2m-2), \qquad H=U(m-1),\qquad m\in\mathbb N\cap[3,\infty).
\end{align*}
We identify $K$ and $H$ with subgroups of $G$ in the natural way. For descriptions of the isotropy representation of $G/H$, see~\cite[Example~6]{MWWZ86} and~\cite[Example~I.18]{WDMK08}. Let us first find the relevant constants. We have
\begin{align*}
\alpha=\frac{2m-4}{2m-3},\qquad \zeta_1^*=\zeta_2^*=\frac12, \q d_1=(m-1)(m-2),\qquad d_2=2(m-1),
\end{align*}
see~\cite[p.~337]{WDMK08}. In light of~\eqref{newconstantsEq}, this yields
\begin{align*}
\gamma_{22}^1=\frac{(m-1)(m-2)}{2m-3}, \qquad 
\gamma_{11}^1=\gamma_{22}^2=0.
\end{align*}
The expression $P(x,x)$ (recall~\eqref{Pxx_def} and~\eqref{gamma112Eq}) is now given by the formula
\begin{align*}
P(x,x)&=d_2\gamma_{22}^1x^4+2d_1\gamma_{22}^1x^4+(2d_1d_2-2d_2\gamma_{22}^1
-2d_1d_2x)x^2 
\\
&=(d_2+2d_1)\gamma_{22}^1x^4-2d_1d_2x^3+2d_2(d_1-\gamma_{22}^1)x^2 \\ 
&=\frac{2(m-1)^3(m-2)}{2m-3}x^4-4(m-1)^2(m-2)x^3+\frac{8(m-1)^2(m-2)^2}{2m-3}x^2.
\end{align*}
The equality $P(x,x)=0$ holds for some $x>0$ if and only if 
\begin{align}\label{exmpl_eq}
\frac{m-1}{2m-3}x^2-2x+\frac{4(m-2)}{2m-3}=0.
\end{align}
This is a quadratic equation in $x$ with discriminant
\begin{align*}
\mathcal D=
\frac4{(2m-3)^2}.
\end{align*}
Its solutions are the positive numbers
\begin{align*}
\alpha_\pm=\frac{(2m-3)\big(1\pm\sqrt{\mathcal D/4}\big)}{m-1}=\frac{2m-3\pm1}{m-1}.
\end{align*}
These numbers satisfy~\eqref{alphapmEq}. The representation $\Ad_G(H)|_{\mathfrak m_1}$ is nontrivial. Indeed, otherwise, the irreducibility of $\Ad_G(H)|_{\mathfrak m_1}$ would imply that $d_1=1$, which is impossible for any choice of~$m$. Theorem~\ref{s2Thm}~(ii-b) describes the Ricci iteration on~$M$. In particular, it shows that a sequence $\{g_i\}_{i\in\mathbb N}$ satisfying~\eqref{RIEq} for all $i\in\mathbb N$ and $g_1=cT$ for some $c>0$ exists if and only if (recall~\eqref{eq_alpha_etc})
\begin{align*}
\alpha_T\ge\alpha_-=\frac{2m-4}{m-1}\in[1,2).
\end{align*}
Theorem~\ref{AncientThm}~(ii-b) describes the ancient Ricci iterations on~$M$. It demonstrates that 
$$\mathcal M^{(\infty)}=\big\{T\in\mathcal M\,:\, z_1/z_2\le\alpha_+=2\big\},$$
where $z_1,z_2$ are given by~\eqref{T_def}.
Note that $\dim SO(5)/U(2)=6$.
} \end{example}

\section{Relative compactness for maximal isotropy}
\lb{GenSec}

This section contains the proof of Theorem~\ref{GenThm}. We no longer impose an upper bound on the number of summands in the isotropy representation. Nor do we require the pairwise inequivalence of the summands.

\begin{remark}
The arguments in this section actually yield a statement that is stronger than the relative compactness of the sequences $\{g_i\}_{i\in\NN}$ and $\{g_{-i}\}_{i=-1}^\infty$ satisfying the iteration equation~\eqref{RIEq} and the ancient iteration equation~\eqref{RRIEq}. Namely, consider the set
\begin{align}\label{Xi_def}
\Xi=\{h\in\mathcal M\,:\,h=\Ric\tilde h~\mbox{for some}~\tilde h\in\mathcal M~\mbox{and}~S(h)\ge0\},
\end{align}
where $S(h)$ denotes the scalar curvature of~$h$. We show that this set is compact in the topology it inherits from~$\mathcal M$. Of course, if $\{g_i\}_{i\in\NN}$ satisfies~\eqref{RIEq} (or, if $\{g_{-i}\}_{i=-1}^\infty$ satisfies \eqref{RRIEq}), 
then $\{g_i\}_{i=2}^\infty$ (or $\{g_{-i}\}_{i=0}^\infty$) lies in $\Xi$.
\end{remark}

The proof of Theorem~\ref{GenThm} will be based on the following three lemmas.
Denote
\begin{align*}
\Theta:=\{X\in T_\mu M\,:\,Q(X,X)=1\},
\end{align*}
the $Q$-unit sphere in $T_\mu M$.

\begin{lemma}\label{lem_ratio}
Suppose $H$ is maximal in $G$. If $g\in\mathcal M$ has nonnegative scalar curvature, then 
\begin{align*}
\frac{\inf_{X\in\Theta}g(X,X)}{\sup_{X\in\Theta}g(X,X)}\ge a,
\end{align*}
for some constant $a\in(0,1]$ depending only on $G$, $H$ and $Q$.
\end{lemma}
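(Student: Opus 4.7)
The plan is to argue by contradiction, reducing the claim to a subalgebra condition that is forced by the maximality of $\mathfrak h$. Since both the conclusion and the hypothesis $S(g)\ge 0$ are invariant under $g\mapsto cg$ (recall $S(cg)=S(g)/c$), if no such $a>0$ existed one could find a sequence $g_n\in\mathcal M$ with $\sup_{X\in\Theta}g_n(X,X)=1$, $\inf_{X\in\Theta}g_n(X,X)\to 0$, and $S(g_n)\ge 0$. The set of positive semidefinite $G$-invariant symmetric $(0,2)$-tensors on $M$ with $\sup_{X\in\Theta}g(X,X)\le 1$ is a compact subset of $\mathcal T$, so after extracting a subsequence $g_n\to g^\infty$ with $\sup_{X\in\Theta}g^\infty(X,X)=1$ and $\ker g^\infty$ a nonzero proper $\Ad_G(H)$-invariant subspace of $\mathfrak m$.

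For each $n$ I would then pick a $g_n$-diagonalizing $Q$-orthogonal $\Ad_G(H)$-invariant decomposition $\mathfrak m=\bigoplus_{i=1}^s\mathfrak m_i^{(n)}$ into irreducibles, so $g_n=\sum_{i=1}^s x_i^{(n)}\pi_i^{(n)*}Q$. Set $u_i^{(n)}:=-\log x_i^{(n)}\ge 0$; then $\min_i u_i^{(n)}=0$ and $|u^{(n)}|\to\infty$. After extracting further subsequences, $\mathfrak m_i^{(n)}\to\mathfrak m_i^\infty$ in the Grassmannian of $d_i$-planes and $u^{(n)}/|u^{(n)}|\to v$ on the unit sphere in $\mathbb R^s$, with $\min v_i=0$ and $v^*:=\max v_i>0$. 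The numbers $b_l$ and $\gamma^l_{mp}$ depend continuously on the decomposition, so their $n$-th values converge to analogous quantities $b_l^\infty,\gamma^{l,\infty}_{mp}$ attached to the limit decomposition. The key step is to apply maximality of $\mathfrak h$ to the set $A:=\{i:v_i=v^*\}$: since $\emptyset\subsetneq A\subsetneq\{1,\ldots,s\}$, the $\Ad_G(H)$-invariant subspace $\mathfrak h\oplus\bigoplus_{i\in A}\mathfrak m_i^\infty$ strictly contains $\mathfrak h$ and is not $\mathfrak g$, so it cannot be a Lie subalgebra. Because $[\mathfrak h,\mathfrak h]\subset\mathfrak h$ and $[\mathfrak h,\mathfrak m_i^\infty]\subset\mathfrak m_i^\infty$ by $\Ad_G(H)$-invariance, this forces $\gamma^{l,\infty}_{jk}>0$ for some $j,k\in A$ and some $l\notin A$, so the corresponding $n$-th structure constant is bounded below by a constant $c_0>0$ for $n$ large.

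To conclude, I would combine Lemma~\ref{PRCLemma} with the trace identity $S(g)=\sum_l d_l\,r_l/x_l$ and rewrite everything in the variables $u_l^{(n)}$:
\[
S(g_n)=\tfrac{1}{2}\sum_l d_l\,b_l\,e^{u_l^{(n)}}-\tfrac{1}{4}\sum_{l,m,p}\gamma^l_{mp}\,e^{u_m^{(n)}+u_p^{(n)}-u_l^{(n)}},
\]
where $b_l,\gamma^l_{mp}$ refer to the $n$-th decomposition. The dominant exponent in the positive part is $|u^{(n)}|v^*$, while the triple $(l,j,k)$ produced above yields a negative-part term of exponent $u_j^{(n)}+u_k^{(n)}-u_l^{(n)}=|u^{(n)}|(2v^*-v_l+o(1))$, exceeding $|u^{(n)}|v^*$ by $|u^{(n)}|(v^*-v_l)(1+o(1))\to\infty$. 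Since $b_l$ remains bounded and $\gamma^l_{jk}\ge c_0$, this negative term overwhelms the positive one, giving $S(g_n)\to-\infty$ and contradicting $S(g_n)\ge 0$. The main technical obstacle will be simultaneously securing convergence of the diagonalizing decompositions and continuity of $b_l,\gamma^l_{mp}$ in them, but I expect this to follow routinely from compactness of Grassmannians and the smooth dependence of these invariants on bases.
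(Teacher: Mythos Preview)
Your argument is correct, and it takes a genuinely different route from the paper's. The paper's proof is a three-line appeal to an external scalar-curvature estimate \cite[Lemma~2.4]{AP15}: after normalizing so that $x_s=1$, that lemma gives
\[
S(g)\le a_1\Big(\frac{x_1}{x_s}\Big)^{-1}-a_2\Big(\frac{x_1}{x_s}\Big)^{-\frac{2^{s-1}}{2^{s-1}-1}},
\]
and $S(g)\ge0$ immediately yields the explicit bound $x_1/x_s\ge(a_2/a_1)^{2^{s-1}-1}$. Your approach is instead a compactness/contradiction argument that is self-contained: you extract limits of the diagonalizing decompositions and of the eigenvalue ``directions'' $u^{(n)}/|u^{(n)}|$, use maximality of $\mathfrak h$ exactly as in Lemma~\ref{gammaMaxLemma} to locate a structure constant $\gamma^{l,\infty}_{jk}>0$ with $j,k\in A$, $l\notin A$, and then show that the corresponding term in the scalar-curvature formula forces $S(g_n)\to-\infty$. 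The paper's proof is shorter and yields an explicit constant $a$, but hides the geometry inside the cited lemma; your proof is longer and only produces $a$ non-constructively, but it makes transparent \emph{why} maximality is the relevant hypothesis (it is precisely what guarantees the dominant negative term). Two small points to tighten: the scalar-curvature identity you use is valid for any diagonal $g$ without assuming~\eqref{InEquivAssumEq}, so you should cite the standard formula (e.g., \cite[(1.3)]{MWWZ86}) rather than Lemma~\ref{PRCLemma}; and the fact that the limiting decomposition $\bigoplus_i\mathfrak m_i^\infty$ is again $Q$-orthogonal, $\Ad_G(H)$-invariant and irreducible follows from compactness of the parameter space of such decompositions (a product of orthogonal/unitary groups over the isotypic components), which is worth stating explicitly.
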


\begin{proof}
Choose the decomposition~\eqref{m_decomp} so that~\eqref{gEq} is satisfied
with $x_1\le\cdots\le x_s$ (see~\cite[p.~180]{MWWZ86}).
Consider the $G$-invariant metric
\begin{align*}
\hat g:=\frac1{x_s}g
\end{align*}
on $M$.
By definition, 
$\inf_{X\in\Theta}\hat g(X,X)\le1$ and $\sup_{X\in\Theta}\hat g(X,X)=1.$
According to~\cite[Lemma~2.4]{AP15}, these formul\ae\ imply the inequality
\begin{align*}
S(\hat g)&\le a_1\Big(\frac{x_1}{x_s}\Big)^{-1}-a_2\bigg(\Big(\frac{x_1}{x_s}\Big)^{-\frac{2^{s-1}}{2^{s-1}-1}}+1\bigg) \\ 
&\le a_1\Big(\frac{x_1}{x_s}\Big)^{-1}-a_2\Big(\frac{x_1}{x_s}\Big)^{-\frac{2^{s-1}}{2^{s-1}-1}}.
\end{align*}
Here, $S(\hat g)$ denotes the scalar curvature of $\hat g$, and $a_1,a_2>0$ are constants depending only on $G$, $H$ and $Q$. As $S(g)\ge0$, also $S(\hat g)\ge0$, and
\begin{align*}
a_2\Big(\frac{x_1}{x_s}\Big)^{-\frac{2^{s-1}}{2^{s-1}-1}}\le a_1\Big(\frac{x_1}{x_s}\Big)^{-1},
\end{align*}
so
\begin{align*}
\frac{\inf_{X\in\Theta}g(X,X)}{\sup_{X\in\Theta}g(X,X)}=\frac{x_1}{x_s}\ge\Big(\frac{a_2}{a_1}\Big)^{2^{s-1}-1},
\end{align*}
as desired.
\end{proof}

\begin{lemma}\label{prop_closed}
Let $H$ be maximal in $G$. Suppose $\{h_i\}_{i\in\NN}\subset\mathcal T$ is a sequence of positive-semidefinite (0,2)-tensor fields on $M$ converging smoothly to some (0,2)-tensor field~$h\in\mathcal T$. If $h_i=\Ric\tilde h_i$ with $\tilde h_i\in\mathcal M$ for every $i\in\mathbb N$, then there exists $\tilde h\in\mathcal M$ satisfying the equality $h=\Ric\tilde h$.
\end{lemma}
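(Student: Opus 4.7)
The plan is to extract a subsequence of rescaled metrics $\tilde h_i$ converging to a positive-definite limit in $\mathcal M$, and then pass to the limit in the identity $h_i = \Ric\tilde h_i$. The key enabling observation is that the Ricci curvature is invariant as a $(0,2)$-tensor under metric rescaling $g \mapsto cg$ with $c>0$: since the Levi-Civita connection depends on $g$ only through combinations of $g^{-1}$ and $\partial g$, the Christoffel symbols, the Riemann tensor, and the Ricci $(0,2)$-tensor are all unchanged under such rescalings. Consequently each $\tilde h_i$ can be replaced by any positive scalar multiple of itself without altering the equation $h_i = \Ric \tilde h_i$.

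I would first check that the scalar curvature $S(\tilde h_i)$ is nonnegative. Indeed,
$$
S(\tilde h_i) = \tr_{\tilde h_i}(\Ric \tilde h_i) = \tr_{\tilde h_i}(h_i) \ge 0,
$$
because $h_i$ is positive-semidefinite and $\tilde h_i^{-1}$ is positive-definite, so the trace of their product is nonnegative. Lemma~\ref{lem_ratio} then yields a constant $a\in(0,1]$ depending only on $G$, $H$, $Q$ (and not on $i$) such that
$$
\inf_{X\in\Theta}\tilde h_i(X,X) \ge a \sup_{X\in\Theta}\tilde h_i(X,X), \qquad i\in\NN.
$$

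Next, I normalize by dividing each $\tilde h_i$ by $\sup_{X\in\Theta}\tilde h_i(X,X)$; by the scale-invariance of $\Ric$, this preserves the equation $h_i = \Ric \tilde h_i$. After this rescaling, $\tilde h_i(X,X)\in [a,1]$ for every $X\in\Theta$. Under the identification of $\mathcal M$ with a subset of $\mathfrak m^*\otimes\mathfrak m^*$ from~\eqref{calMEq}, the rescaled sequence $\{\tilde h_i\}$ lies in a closed and bounded subset of $\mathfrak m^*\otimes\mathfrak m^*$ whose elements are all positive-definite (with eigenvalues bounded below by $a>0$). Hence $\{\tilde h_i\}$ is relatively compact in $\mathcal M$, and after passing to a subsequence, $\tilde h_{i_k} \to \tilde h$ smoothly for some $\tilde h \in \mathcal M$.

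Finally, since the Ricci operator $\Ric\colon \mathcal M \to \mathcal T$ is continuous (indeed smooth), $\Ric \tilde h_{i_k} \to \Ric \tilde h$ in $\mathcal T$. Combining this with $\Ric \tilde h_{i_k} = h_{i_k} \to h$ from the hypothesis, we obtain $h = \Ric \tilde h$, as required. The main obstacle in this argument is to rule out both collapse and blow-up of $\tilde h_i$; this is precisely what the scale-invariance of $\Ric$ combined with the sharp ratio estimate of Lemma~\ref{lem_ratio} achieves, and the latter is itself the principal place where maximality of $H$ in $G$ enters the argument.
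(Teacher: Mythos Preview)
Your proof is correct and follows essentially the same approach as the paper's: normalize the $\tilde h_i$ using the scale-invariance of $\Ric$, apply Lemma~\ref{lem_ratio} (whose hypothesis $S(\tilde h_i)\ge0$ you verify explicitly, whereas the paper leaves it implicit), extract a convergent subsequence, and pass to the limit. The only cosmetic difference is the choice of normalization---you set $\sup_{X\in\Theta}\tilde h_i(X,X)=1$, while the paper sets $|\tilde h_i|_Q=1$---but either works equally well.
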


The inner product $Q$ induces an inner product on $\mathfrak m^*\otimes\mathfrak m^*$, which yields, via~\eqref{barMEq}, an inner product on~$\mathcal T$. We denote by $|\cdot|_Q$ the corresponding norm on~$\mathcal T$.

\begin{proof}
By renormalizing each $\tilde h_i$ (leaving $h_i=\Ric\tilde h_i$ unchanged), we assume that
$|\tilde h_i|_Q=1$.
Because the space $\mathcal T$ is finite-dimensional, the unit sphere in it with respect to $|\cdot|_Q$ is compact. Consequently, there exists a subsequence $\{\tilde h_{i_m}\}_{m\in\NN}$ converging to some $\tilde h\in\mathcal T$. We claim that $\tilde h\in\calM$, 
or, equivalently, that
\begin{align*}
\inf_{X\in\Theta}\tilde h(X,X)>0.
\end{align*}
By Lemma~\ref{lem_ratio}, 
\begin{align*}
\inf_{X\in\Theta}\tilde h_{i_m}(X,X)\ge a\sup_{X\in\Theta}\tilde h_{i_m}(X,X)\ge a\frac{|\tilde h_{i_m}|_Q}{\sqrt n}=\frac a{\sqrt n}>0,
\end{align*}
and so the same holds for $\tilde h$ by passing to the limit.
Since
\begin{align*}
|\Ric\tilde h-h|_Q=
\lim_{m\to\infty}|\Ric\tilde h_{i_m}-h|_Q=\lim_{m\to\infty}|h_{i_m}-h|_Q=0,
\end{align*}
one has $h=\Ric\tilde h$.
\end{proof}

\begin{lemma}\label{lem_pinch}
Let $H$ be maximal in $G$. Assume $g$ lies in the set $\Xi$ given by~(\ref{Xi_def}).
Then
\begin{align*}
a_-\le\inf_{X\in\Theta}g(X,X)\le\sup_{X\in\Theta}g(X,X)\le a_+
\end{align*}
for some constants $a_-,a_+>0$ depending only on $G$, $H$ and $Q$.
\end{lemma}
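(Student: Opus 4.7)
The plan is to reduce the two-sided pinching of $g$ to controlling the single quantity $\sup_{X\in\Theta}g(X,X)$ from above and below over $g\in\Xi$: Lemma~\ref{lem_ratio} already supplies a universal ratio bound $\inf_\Theta g(X,X)\ge a\sup_\Theta g(X,X)$ for every $g\in\Xi$, so a bound on the supremum automatically yields one on the infimum (and vice versa). Both bounds I would establish by contradiction, exploiting the scale invariance of $\Ric$ to normalize a preimage $\tilde g\in\mathcal M$ with $g=\Ric\tilde g$, together with compactness of the unit sphere in the finite-dimensional space $\mathcal T$ (recall~\eqref{barMEq}).

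For the upper bound, suppose $\{g_k\}\subset\Xi$ with $\sup_\Theta g_k(X,X)\to\infty$, write $g_k=\Ric\tilde g_k$, and rescale each $\tilde g_k$ so that $\sup_\Theta\tilde g_k(X,X)=1$. A subsequence converges in $\mathcal T$ to a nonzero positive-semidefinite $\tilde g_\infty$. If $\tilde g_\infty$ is positive-definite (so lies in $\mathcal M$), continuity of $\Ric$ on $\mathcal M$ forces $g_k\to\Ric\tilde g_\infty$, which is finite, contradicting $\sup g_k\to\infty$. The hard case---and the main obstacle---is when $\tilde g_\infty$ is degenerate, so that $\rho_k:=\inf_\Theta\tilde g_k(X,X)\to 0$ and $\Ric$ fails to extend continuously to the limit. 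Here I would bypass the tensor limit via the scalar invariant $S(\tilde g_k)$: the inequality used in the proof of Lemma~\ref{lem_ratio} (from \cite[Lemma~2.4]{AP15}, which crucially requires $H$ maximal) gives
\[
S(\tilde g_k)\le a_1\rho_k^{-1}-a_2\bigl(\rho_k^{-2^{s-1}/(2^{s-1}-1)}+1\bigr)\longrightarrow -\infty,
\]
whereas $\tilde g_k$ and $g_k=\Ric\tilde g_k$ are both positive-definite, so $S(\tilde g_k)=\tr_{\tilde g_k}g_k>0$. This contradiction dispatches the degenerate case and yields~$a_+$.

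For the lower bound, assume $\{g_k\}\subset\Xi$ with $\inf_\Theta g_k(X,X)\to 0$; by the ratio bound also $\sup_\Theta g_k\to 0$. Normalize $\tilde g_k$ so that $\inf_\Theta\tilde g_k=1$. If $\{\tilde g_k\}$ remains bounded in $\mathcal T$, a subsequence converges in $\mathcal M$ to some $\tilde g_\infty$ and continuity of $\Ric$ gives $\Ric\tilde g_\infty=\lim g_k=0$, impossible by Lemma~\ref{RicPosLemma} since $H$ is maximal. Otherwise $\sup_\Theta\tilde g_k\to\infty$; rescaling $\hat g_k:=\tilde g_k/\sup_\Theta\tilde g_k$ preserves $\Ric\hat g_k=g_k$ and lands us back in the degenerate regime of the previous step, where the same scalar-curvature contradiction applies. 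The unifying mechanism is the tension between the upper bound on $S$ from \cite{AP15} near the degenerate boundary of $\mathcal M$ and the elementary positivity $\tr_{\tilde g}(\Ric\tilde g)>0$ available whenever both tensors lie in $\mathcal M$; maximality of $H$ is used in both ingredients.
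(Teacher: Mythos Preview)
Your argument is correct. Both bounds go through as written.

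The paper's proof differs from yours chiefly in the \emph{upper} bound. Rather than normalizing the preimage $\tilde g_k$ and using compactness of the unit sphere in $\mathcal T$, the paper normalizes $g$ itself: it sets $\tilde g:=g/\sup_\Theta g(X,X)$, observes that Lemma~\ref{lem_ratio} confines $\tilde g$ to a compact set $\Omega\subset\mathcal M$, and then invokes the DeTurck--Koiso/Hamilton obstruction \cite{DeTKoiso,RH84} (a tensor whose sectional curvatures are all below $1/(n-1)$ cannot arise as the Ricci curvature of any metric) to bound $\sup_\Theta g(X,X)$ by $(n-1)$ times the maximum of the largest sectional curvature over $\Omega$. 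Your route avoids this external non-existence result entirely, relying only on the scalar-curvature estimate from \cite{AP15} already used in Lemma~\ref{lem_ratio}; in that sense your proof is more self-contained. The paper's argument, on the other hand, produces an explicit expression for $a_+$.

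For the \emph{lower} bound the two proofs are essentially the same: the paper packages your preimage-compactness step into Lemma~\ref{prop_closed} and then contradicts Lemma~\ref{RicPosLemma}, exactly as you do.

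One streamlining remark: since $g_k=\Ric\tilde g_k$ lies in $\mathcal M$, one has $S(\tilde g_k)=\tr_{\tilde g_k}g_k>0$, so Lemma~\ref{lem_ratio} applies directly to $\tilde g_k$ and shows that, under any unit normalization, $\tilde g_k$ stays in a fixed compact subset of $\mathcal M$. Thus your ``degenerate'' subcase in the upper bound and your ``unbounded'' subcase in the lower bound are in fact vacuous; the contradictions you derive there are precisely what rules them out, so the case splits are harmless but unnecessary.
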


\begin{proof}
First, we prove that
\begin{align}\label{pinch1Eq}
\inf_{X\in\Theta}g(X,X)\ge a_-.
\end{align}
Assume, for the sake of  contradiction, that there exists a sequence $\{h_i\}_{i\in\NN}\subset\Xi$ such that 
$$\inf_{X\in\Theta}h_i(X,X)\le\frac1i,\q i\in\mathbb N.$$
According to Lemma~\ref{lem_ratio},
\begin{align*}
\sup_{X\in\Theta}h_i(X,X)\le\frac1{ia},\q i\in\mathbb N.
\end{align*}
Therefore, the formula $\lim_{i\to\infty}|h_i|_Q=0$ holds, and the sequence $\{h_i\}_{i\in\NN}$ converges 
to~$0\in\mathcal T$. In view of Lemma~\ref{prop_closed},  $\Ric\tilde h=0$ for some $\tilde h\in\mathcal M$. However, by Lemma~\ref{RicPosLemma}, since $H$ is maximal in $G$, there are no Ricci-flat metrics in $\mathcal M$. This contradiction proves~\eqref{pinch1Eq}.

Next, we claim that
\begin{align}\label{pinch2_Eq}
\sup_{X\in\Theta}g(X,X)\le a_+.
\end{align}
Consider the set
\begin{align*}
\Omega:=\big\{h\in\mathcal M\,:\,\sup_{X\in\Theta}h(X,X)=1,~S(h)\ge0\big\}.
\end{align*}
Lemma~\ref{lem_ratio} shows that
$\Omega\subset\hat\Omega:=\{h\in\mathcal M\,:\,a\le h(X,X)\le1~\mbox{for all}~X\in\Theta\},
$
where $a>0$. 
The topology of $\mathcal M$ induces topologies on $\Omega$ and $\hat\Omega$. Fix a $Q$-orthonormal basis $\{e_l\}_{l=1}^n$ in $T_\mu M$. Clearly, $h\in\mathcal M$ lies in $\hat\Omega$ if and only if the eigenvalues of the matrix of $h$ at $\mu\in M$ with respect to the basis $\{e_l\}_{l=1}^n$ belong to the interval $[a,1]$. This observation implies that $\hat\Omega$ is compact. It is easy to see that $\Omega$ is closed in $\hat\Omega$. Therefore, $\Omega$ must be compact.

Define a function 
\begin{align*}
f:\Omega\to\mathbb R
\end{align*}
so that $f(h)$ is the largest of the sectional curvatures of $h$ at $\mu\in M$. This function is continuous and, therefore, bounded. Given $h\in\Omega$, it is easy to see that $bh$ has sectional curvatures less than $\frac1{n-1}$ as long as $b>(n-1)f(h)$. 
For such $b$, the tensor field
$bh$ cannot be the Ricci curvature of any metric in 
$\mathcal M$~\cite[Corollary 3.6]{DeTKoiso},~\cite[Theorem 4.3]{RH84}.

Define
\begin{align*}
\tilde g:=g/{\sup_{X\in\Theta}g(X,X)}.
\end{align*}
It is clear that $\tilde g$ lies in $\Omega$. According to the assumptions of the lemma, $b\tilde g$ is the Ricci curvature of a metric in $\mathcal M$ for $b=\sup_{X\in\Theta}g(X,X)$.
Consequently, 
\begin{align*}
\sup_{X\in\Theta}g(X,X)\le(n-1)f(\tilde g).
\end{align*} 
Thus,~\eqref{pinch2_Eq} holds with
$
a_+:=(n-1)\max_{h\in\Omega}f(h).
$
\end{proof}

\begin{proof}[Proof of Theorem~\ref{GenThm}.]
(i) The existence of a sequence $\{g_i\}_{i\in\NN}\subset\mathcal M$ satisfying~\eqref{RIEq} for all $i\in\NN$ and $g_1=cT$ for some $c>0$ is a consequence of Theorem~\ref{max_uniqThm} and the maximality assumption on~$H$. Lemma~\ref{lem_pinch} implies that any such sequence lies in the set
\begin{align*}
\{cT\}\cup\{h\in\mathcal M\,:\,a_-\le h(X,X)\le a_+~\mbox{for all}~X\in\Theta\}.
\end{align*}
This set, with the topology inherited from $\mathcal M$, is compact. Any of its subsets must, therefore, be relatively compact in~$\mathcal M$.

(ii) The proof is completely analogous to that of~(i).
\end{proof}

The following remark may prove useful in the future analysis of the Ricci iteration. Given $i\in\mathbb Z$, the Riemannian metric $g_i$ from Theorem~\ref{GenThm} induces an inner product on $\mathfrak m^*\otimes\mathfrak m^*$, which yields, via~\eqref{barMEq}, an inner product on $\mathcal T$. Denote the corresponding norm on $\mathcal T$ by~$|\cdot|_{g_i}$.

\begin{remark}
Lemma~\ref{lem_pinch} provides an estimate for the difference between $g_i$ and $g_{i-1}$. Namely, we have
\begin{align}\label{ineq_rem}
|g_i-g_{i-1}|_Q^2\le a_+^2\big(n+|\Ric g_i|_{g_i}^2-2S(g_i)\big),\qquad i\in\mathbb Z.
\end{align}
To see this, fix a $Q$-orthonormal basis $\{e_l\}_{l=1}^n$ of $T_\mu M$ diagonalizing $g_i$. The left-hand side of~\eqref{ineq_rem} equals
\begin{align*}
\sum_{l,m=1}^n(g_i-g_{i-1})(e_l,e_m)^2
=\sum_{l,m=1}^ng_i(e_l,e_l)g_i(e_m,e_m)\bigg((g_i-g_{i-1})\bigg(\frac{e_l}{\sqrt{g_i(e_l,e_l)}},\frac{e_m}{\sqrt{g_i(e_m,e_m)}}\bigg)\bigg)^2.
\end{align*}
Lemma~\ref{lem_pinch} implies that it is bounded from above by
\begin{align*}
a_+^2\sum_{l,m=1}^n(g_i-g_{i-1})\bigg(\frac{e_l}{\sqrt{g_i(e_l,e_l)}},\frac{e_m}{\sqrt{g_i(e_m,e_m)}}\bigg)^2.
\end{align*}
The vectors $\Big\{\frac{e_l}{\sqrt{g_i(e_l,e_l)}}\Big\}_{l=1}^n$ form a $g_i$-orthonormal basis of $T_\mu M$. Therefore, our last displayed expression equals
\begin{align*}
a_+^2|g_i-g_{i-1}|_{g_i}^2&= a_+^2\langle g_i-g_{i-1},g_i-g_{i-1}\rangle_{g_i}\\
&=a_+^2\big(|g_i|_{g_i}^2+|g_{i-1}|_{g_i}^2-2\langle g_i,g_{i-1}\rangle_{g_i}\big)
=a_+^2\big(n+|\Ric g_i|_{g_i}^2-2S(g_i)\big),
\end{align*}
where $\langle\cdot,\cdot\rangle_{g_i}$ is the inner product on $\mathcal T$ induced by~$g_i$. Formula~\eqref{ineq_rem} follows immediately.
\end{remark}

\section*{Acknowledgments}
\label{}

We are grateful to the referees for their careful reading and comments.
Research supported by the Australian Research Council Discovery Early-Career Researcher Award DE150101548 (A.P.) and by NSF grants DMS-1206284,1515703 and a Sloan Research Fellowship (Y.A.R.). 
Part of this work took place while Y.A.R. visited MSRI (supported by NSF grant DMS-1440140)
during the Spring 2016 semester.

\vspace{10pt}

{\sc
The University of Queensland
}

\texttt{a.pulemotov@uq.edu.au}

\medskip

{\sc 
University of Maryland
}

{\texttt{yanir@umd.edu}}

\end{document}